\documentclass{mod}
\usepackage{url}
\usepackage{setspace}
\usepackage{scrextend}
\usepackage{cite}
\usepackage{xcolor}

\usepackage{graphicx}

\usepackage{hyperref}

\usepackage{caption}
\usepackage{subcaption}
\newcounter{Chapcounter}
\renewcommand{\theChapcounter}{Part \Roman{Chapcounter}} 

\newcommand{\chapter}[1]
{ \vspace{2em} {\centering
		\addtocounter{Chapcounter}{1} \huge {\textbf{\underline{\textit{\color{black} \theChapcounter: ~#1}}}} }
	\addcontentsline{toc}{section}
	{ \hspace{-3em}\underline{\textit{ \color{blue} 
				~\theChapcounter.~~ #1}}\vspace{0.2em}}
}

\hypersetup{colorlinks=true}
\hypersetup{linkcolor=black}

\usepackage[shortlabels]{enumitem}
\usepackage{amsmath}
\usepackage{amsthm}
\usepackage{amssymb}
\usepackage{gensymb}

\usepackage{titletoc}
\usepackage{mathtools}
\usepackage{mathrsfs}

\usepackage[all]{xy}
\usepackage[toc,page]{appendix}
\usepackage{titlesec}
\usepackage{upgreek}

\usepackage{ stmaryrd }

\numberwithin{equation}{section}
\renewcommand{\theequation}{\thesection.\arabic{equation}}

\makeatletter
\def\thm@space@setup{\thm@preskip=5pt
	\thm@postskip=5pt}
\makeatother
\newtheoremstyle{mystyle}      
{} 
{} 
{\itshape} 
{} 
{\bfseries} 
{.} 
{ } 
{} 

\theoremstyle{mystyle}
\newcounter{mainthm}

\newtheorem{thm}{Theorem}[section]

\newtheorem{lem}[thm]{Lemma}
\newtheorem{prop}[thm]{Proposition}

\newtheorem{defn-thm}[thm]{Definition-Theorem}

\newtheorem{defn-lem}[thm]{Definition-Lemma}

\makeatletter
\def\thm@space@setup{\thm@preskip=5pt
	\thm@postskip=5pt}
\makeatother
\newtheoremstyle{mydefinition}      
{} 
{} 
{} 
{} 
{\itshape\bfseries} 
{.} 
{ } 
{} 

\theoremstyle{mydefinition}

\newtheorem{ex}[thm]{Example}

\newtheorem{convention}[thm]{Convention}
\newtheorem{construction}[thm]{Construction}

\newtheoremstyle{rmk}
{5pt}
{5pt}
{}
{}
{\itshape}
{.}
{.5em}
{}

\theoremstyle{rmk}
\newtheorem{axiom*}[thm]{Axiom}

\newtheoremstyle{def}
{5pt}
{5pt}
{}
{}
{\bfseries}
{.}
{.5em}
{}
\theoremstyle{def}

\newtheorem{defn}[thm]{Definition}
\newtheorem{defn-prop}[thm]{Definition-Proposition}
\newtheorem{rmk}[thm]{Remark}

\newtheoremstyle{note}
{8pt}
{5pt}
{\itshape}
{10pt}
{\bfseries}
{}
{.5em}
{}

\theoremstyle{note}

\makeatletter
\renewenvironment{proof}[1][\proofname]{\par
	\vspace{-\topsep}
	\pushQED{\qed}%
	\normalfont
	\topsep0pt \partopsep0pt 
	\trivlist
	\item[\hskip\labelsep
	\itshape
	#1\@addpunct{.}]\ignorespaces
}{%
	\popQED\endtrivlist\@endpefalse
	\addvspace{6pt plus 6pt} 
}
\makeatother

\newcommand{\pr}{\mathrm{pr}}               
\newcommand{\pair}[2]{\langle #1,\,#2\rangle}

\newcommand{\id}{\mathrm{id}}

\newcommand{\dd}{\mathsf d}
\newcommand{\ev}{\mathrm{ev}}

\newcommand{\Cint}{C^{\mathrm{int}}}
\newcommand{\Cext}{C^{\mathrm{ext}}}
\newcommand{\End}{\mathrm{End}}

\newcommand{\m}{\mathfrak m}
\newcommand{\n}{\mathfrak n}

\DeclareMathOperator{\Hom}{Hom}

\renewcommand{\thefootnote}{\fnsymbol{footnote}}

\usepackage{perpage}
\MakePerPage{footnote}

\makeatletter
\renewcommand\@makefntext[1]{ %
	\parindent 1em%
	\noindent
	\llap{\thefootnote\enspace}%
	#1
}
\makeatother

\titlecontents{subsubsection}
[0em]
{\footnotesize \vspace{1pt}}%
{\qquad \qquad \thecontentslabel.\enspace}
{}
{\titlerule*[0.5pc]{.}\contentspage}%

\titlecontents{section}
[0.72em]
{\scriptsize\bfseries\vspace{3pt}}%
{\thecontentslabel.\enspace}
{}
{\titlerule*[0.38pc]{.}\contentspage}%

\titlecontents{subsection}
[0em]
{\scriptsize \vspace{0pt}}%
{\qquad \quad \thecontentslabel.\enspace}
{}
{\titlerule*[0.5pc]{.}\contentspage}%

\titleformat{\subsection}{
	\itshape \bfseries \normalsize}{[\thesubsection] \ }{0em}{}[\vspace{0.3em}]
\titlespacing{\subsection}
{0pt}
{2.25ex plus 1ex minus .2ex}
{0pt}

\titleformat{\paragraph}[runin]{
	\bfseries \normalsize}{[\theparagraph] \ }{0em}{}[]
\titlespacing{\paragraph}
{5pt}
{4pt}
{5pt}

\begin{document}
	\setlength{\parindent}{15pt}	\setlength{\parskip}{0em}
	
	\title[{Higher operad structure for Fukaya categories}]{ {Higher operad structure for Fukaya categories}
	}
	\author[Hang Yuan]{Hang Yuan}
	\address{Beijing Institute of Mathematical Sciencens and Applications (BIMSA), Beijing, 101408, China; E-mail: yuanhang@bimsa.cn}
	
	\begin{abstract} {\sc Abstract:}  
		Operads often arise from geometry. The standard $A_\infty$ operad can be derived from the cellular chains on the Stasheff associahedra, and an $A_\infty$ algebra is an algebra over this operad.
		The notion of an $\mathbf{fc}$-multicategory, also called a virtual double category, is a two-dimensional generalization of operads and multicategories. Here $\mathbf{fc}$ stands for the free category monad.
		
We establish a natural $\mathbf{fc}$-multicategory structure on the collection of moduli spaces of \textit{pseudo-holomorphic polygons} with boundary on sequences of Lagrangian submanifolds in a symplectic manifold.
		These moduli spaces are known to underlie the construction of Fukaya categories.
Based on this, we develop the theory of differential graded (dg) variants of $\mathbf{fc}$-multicategories and show that a broad range of $A_\infty$-type structures, such as $A_\infty$ algebras, $A_\infty$ (bi)modules, and $A_\infty$ categories (possibly curved), admit a uniform operadic formulation as algebras over dg $\mathbf{fc}$-multicategories.
	\end{abstract}

	\maketitle
	%
	%
	
	\hypersetup{
		colorlinks=true,
		linktoc=all,
		linkcolor=blue,
		citecolor=blue
	}
	
	\tableofcontents
	
	\hypersetup{
		colorlinks=true,
		linktoc=all,
		linkcolor=brown,
		citecolor=blue
	}

	%
	%

	\section{Introduction}
	A (non-symmetric) operad is a multicategory with one object.
The idea of multicategories goes back to Lambek’s work in the 1960s \cite{lambek2006deductive}, and was further developed by Boardman-Vogt \cite{boardman1973homotopy} and May \cite{may2006geometry} and many others. Since then, operads and multicategories have served as a useful language for encoding algebraic structures with many inputs and one output.
A more general perspective, due to Burroni \cite{burroni1971t} and developed further by many others, views multicategories as arising from monads. For a cartesian monad $T$ on a category, one can define $T$-multicategories, with classical multicategories and operads appearing as special cases. In this paper, we focus on the case $T=\mathbf{fc}$ (\S~\ref{s_fc_directedgraphs}) and study the notion of a \textbf{\textit{$\mathbf{fc}$-multicategory}} as introduced by Leinster in \cite{leinster1999fc,leinster1998general,leinster2004higher}. It is also called a \textit{\textbf{virtual double category}} as introduced by Cruttwell and Shulman in \cite{cruttwell2009unified}; see also \cite{koudenburg2019augmented,nasu2025logical}.

The discovery of $A_\infty$ algebras by Stasheff in the 1960s \cite{stasheff1963homotopy} marked a pivotal moment in homotopical algebra and motivated the use of operads in topology and mathematical physics. Indeed, the Stasheff associahedra $K = \{K_n\}$ form a cellular non-symmetric topological operad, that is, a sequence of topological spaces equipped with circle-$i$ partial composition operations $\circ_i: K_r \times K_s \to K_{r+s-1}$ given by cellular maps satisfying certain associativity conditions.
	Moreover,
	the dg operad of cellular chains of the associahedra is isomorphic to
	the standard $A_\infty$ dg operad $\mathcal A_\infty= \{\mathcal A_\infty(n)\}_{n\ge 2}$.
	It is freely generated by symbols $\mathbf m_n$, and its differential $\delta$ is decided on generators by
	\[
	\delta(\mathbf m_n) = \pm \sum_{r+s+t=n} \mathbf m_{r+1+t}\circ_{r+1} \mathbf m_s
	\]
	and extended by the operadic Leibniz rule \cite{markl1998homotopy}.
	It is known that \textit{an $A_\infty$ algebra} on a cochain complex $X$ can be viewed as an \textit{algebra over the dg operad $\mathcal A_\infty$}, or equivalently, a dg operad morphism 
	\begin{equation}
		\label{operad_A_inf_intro_eq}
		\alpha:\mathcal A_\infty \to \End (X)
	\end{equation}
	where $\End (X)$ is the standard endomorphism dg operad.
	
	Since Fukaya’s work \cite{Fukaya_1993}, $A_\infty$ structures have become standard in symplectic geometry. However, beyond $A_\infty$ algebras, variants such as $A_\infty$ bimodules \cite[\S 3.7]{FOOOBookOne} and $A_\infty$ categories \cite{Fukaya_1993,FuUnobstructed,SeidelBook} also appear frequently. 
	But, for these variants the operadic viewpoint in \eqref{operad_A_inf_intro_eq} is explored less often. Typically, one introduces a collection of multilinear operations (often with different input types) and then writes the required identities componentwise.
	For instance, an \textit{$A_\infty$ category} is defined as the data consisting of a set of objects, graded vector spaces $\hom(v_0,v_1)$ for pairs $(v_0,v_1)$ of objects, and higher compositions
	\[
	\mu_d: \hom(v_{d-1}, v_d)\otimes\cdots\otimes \hom(v_0, v_1)\longrightarrow \hom(v_0,v_d) \]
	satisfying certain $A_\infty$ associativity relations.
	While effective in practice, this may look like a collection of ad hoc formulas, more or less obscuring both the conceptual uniformity suggested by \eqref{operad_A_inf_intro_eq} and the geometric origin of the operations. In symplectic geometry, these structure maps are defined by (virtual) counts of pseudo-holomorphic polygons, whereas the purely algebraic package above can discard useful geometric data. For example, it does not retain the topological class of the boundary loop of these polygons, which is often useful in applications, such as in formulating the (boundary) divisor axiom for Fukaya's $A_\infty$ algebras \cite{FuCyclic,Yuan_I_FamilyFloer,Yuan_unobs}.
	
Now, there are two natural questions:
	
	\begin{enumerate}[(I)]
	
		\item \textit{Given the operad structure of Stasheff’s associahedra, can we likewise extract ``operad-like’’ structures from the moduli spaces of pseudo-holomorphic disks and polygons?}
		
		\item \textit{Given that an $A_\infty$ algebra can be viewed as an algebra over the standard dg $A_\infty$ operad as in (\ref{operad_A_inf_intro_eq}), can we formulate the various other $A_\infty$-type structures operadically in the same spirit?}
	\end{enumerate}

A satisfactory understanding of Question (I) may shed light on a systematic route to Question (II).

\subsection*{Geometric motivations}
Let's first address Question (I). 
In symplectic geometry, moduli spaces of pseudo-holomorphic curves with Lagrangian boundary conditions (single or multiple, embedded or immersed) should give rise to a rich family of $A_\infty$-type structures, suggesting that operads alone may not be flexible enough.
	
As a starting point, we consider the moduli space of pseudo-holomorphic disks with boundary on a single embedded Lagrangian submanifold (Figure \ref{figure_moduli}). The next statement records the key topological features of this moduli space and serves as our entry point for explaining how $\mathbf{fc}$-multicategories emerge from more general moduli spaces.
The following statement appears to be implicit in the standard literature \cite{FOOOBookOne,FOOO_Kuranishi}, while it is not formulated in the language of operads. Fukaya has pursued an operadic perspective on the differential-geometric theory of Kuranishi structures on moduli spaces \cite{Fukaya_operad}.

	\begin{prop} 
		\label{moduli_multicategory_prop_intro}
		Let $L$ be an embedded closed Lagrangian submanifold in a closed symplectic manifold $(X,\omega)$.
		Then, the moduli space of pseudo-holomorphic disks bounded by $L$ forms a non-symmetric topological multicategory (colored operad), with set of objects (colors) equal to $L$.
	\end{prop}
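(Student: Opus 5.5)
The plan is to write the multicategory down explicitly and then check the axioms directly from the boundary structure of the compactified moduli spaces. The objects (colors) are the points of $L$. For $p_1,\dots,p_k,p_0\in L$ the space of arrows $(p_1,\dots,p_k)\to p_0$ is
\[
\mathcal M(p_1,\dots,p_k;p_0):=\coprod_{\beta}\mathcal M_{k+1}(\beta)\times_{L^{k+1}}\{(p_0,p_1,\dots,p_k)\}.
\]
This is the fiber of the evaluation map $\mathrm{ev}=(\mathrm{ev}_0,\dots,\mathrm{ev}_k)$ on the (stable-map compactified) moduli space $\mathcal M_{k+1}(\beta)$. That space consists of bordered stable $J$-holomorphic disks in class $\beta\in\pi_2(X,L)$ with $k+1$ boundary marked points $z_0,\dots,z_k$, ordered counterclockwise. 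Equivalently, one can take the whole space $\mathcal M_{k+1}=\coprod_\beta\mathcal M_{k+1}(\beta)$ together with its source/target map $\mathrm{ev}$ to $L^k\times L$. This is the "topological $T$-span" viewpoint, and in it the multicategory data becomes a topological space over $L^k\times L$.

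\textbf{Composition.} The circle-$i$ compositions are the gluing-at-a-node maps
\[
\circ_i:\mathcal M(q_1,\dots,q_s;p_i)\times\mathcal M(p_1,\dots,p_r;p_0)\to\mathcal M(p_1,\dots,p_{i-1},q_1,\dots,q_s,p_{i+1},\dots,p_r;p_0).
\]
Given disks $u$ and $v$ with $\mathrm{ev}_0(v)=\mathrm{ev}_i(u)=p_i$, we identify the $i$-th marked point of $u$ with the output marked point of $v$. The result is a boundary node, and we forget the two glued marked points. This gives a bordered stable map of class $\beta_u+\beta_v$, which is exactly one of the codimension-one boundary strata of the compactification. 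The map is continuous in the Gromov topology, because that topology is defined via combinatorial types and gluing of component maps. The insertion order follows from the counterclockwise ordering of the marked points.

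\textbf{Units.} For units I would use the constant disk, but a constant disk with two marked points is unstable. So I would instead adjoin a formal identity $1_p$ for each $p\in L$, placed in the $k=1$, $\beta=0$ component. Composition with $1_p$ is defined to be the identity. Alternatively, one can state the result for non-unital multicategories.

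\textbf{Axioms.} The associativity axioms are the two standard operadic ones: the nested case $(w\circ_j v)\circ_i u$, and the parallel case where two disks are attached to different legs. Both hold because each side produces the same stable map. That stable map is a tree of disk components whose underlying graph is the grafting of the two trees. The stable map is determined by its tree, its components, and its nodal identifications, which do not depend on the order in which the gluings are performed. One still has to verify the reindexing of marked points, but this is a routine check of the same combinatorics as for the associahedra $K_n$. In fact, forgetting the maps recovers the operad structure on $K$. Energy additivity and $\beta$-additivity ensure that each graded piece lands in the correct component.

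\textbf{Main obstacle.} I expect the hardest part to be making this honest at the level of the compactified moduli spaces, not the combinatorics. Three points need care.
\begin{itemize}
\item Sphere bubbles and disk bubbles must be retained, so that the gluing of stable maps is closed. Here the compactness theorem gives the relevant spaces as compact Hausdorff spaces.
\item The gluing map must be checked to be a homeomorphism onto its stratum, or at least continuous. I would take this from the standard description of the boundary of $\mathcal M_{k+1}(\beta)$ in \cite{FOOOBookOne}.
\item For $k=0,1$ with $\beta\neq 0$ the spaces are stable and fine. Only the unit needs the formal adjunction above.
\end{itemize}
Since only topological (not Kuranishi or chain-level) structure is claimed, no transversality is required, and the proof reduces to these continuity and combinatorial checks.
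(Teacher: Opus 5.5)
Your proposal is correct and follows essentially the same route as the paper. The arrows are the stable-disk moduli spaces, with $(\ev_1,\dots,\ev_k)$ as source and $\ev_0$ as target. Composition $\circ_i$ glues $z_i$ of the outer disk to the output point $w_0$ of the inner one at a boundary node, with marked points reindexed as $(z_0,\dots,z_{i-1},w_1,\dots,w_{k_2},z_{i+1},\dots)$. Units come from setting $\mathcal M(1,\theta)=L$, which is exactly your formal $1_p$.

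One small correction: use classes in $H_2(X,L)$ (the paper's $S_L$) rather than $\pi_2(X,L)$. Adding the classes of two disks glued at a boundary node is only well defined in $\pi_2(X,L)$ once you choose paths between base points.
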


	\begin{figure}[h]
		\centering
		\includegraphics[width=10cm]{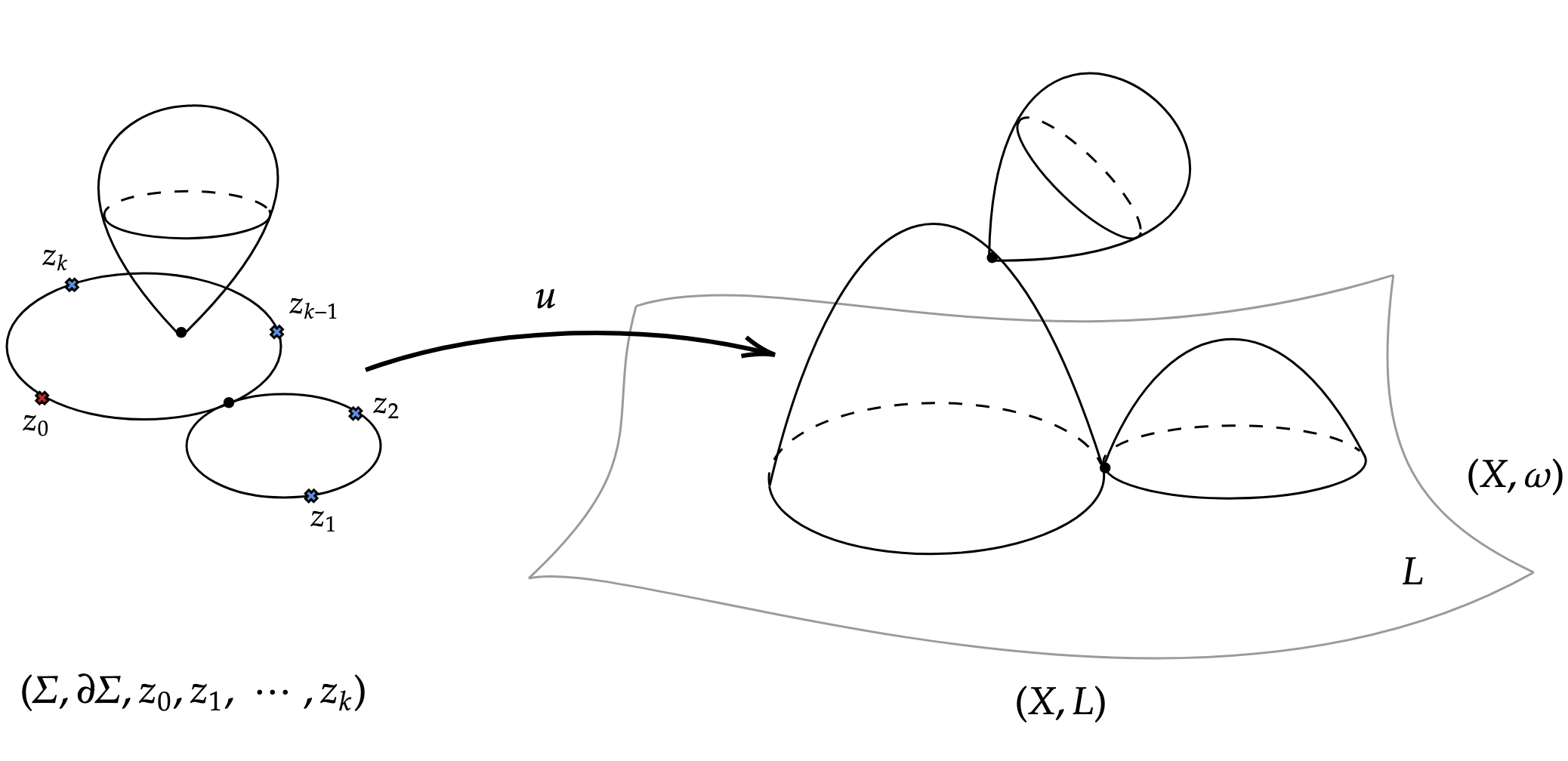}
		\caption{Moduli space of pseudo-holomorphic (stable) disks bounded by a single embedded Lagrangian}
		\label{figure_moduli}
	\end{figure}

It is not necessary for the reader to have extensive background in symplectic geometry, and one can follow the basic topological intuition conveyed by Figure \ref{figure_moduli}. One of the main aims of this paper is to use the geometry as motivation for the algebraic results developed later.

For the convenience of readers, we briefly recall the symplectic background.
	Fix $k\ge 0$ and $\beta\in H_2(X,L)$. 
	We consider a tuple
	$(\Sigma,\partial\Sigma;\, z_0,\dots,z_k;\, u)$ as in Figure~\ref{figure_moduli},	where $(\Sigma,\partial\Sigma)$ is an oriented nodal bordered Riemann surface, $z_0,\dots,z_k\in \partial\Sigma$ are $k+1$ boundary marked points ordered cyclically for the induced boundary orientation, 
	$u:(\Sigma,\partial\Sigma)\to (X,L)$ is a continuous map whose restriction to the smooth locus of $\Sigma$ solves certain Cauchy-Riemann equation, and these data is also required to satisfy the so-called stable condition. Such a map is usually called a pseudo-holomorphic curve or stable map in the symplectic literature; see \cite{FOOOBookOne,FOOODiskOne} for more details.

	Let $\mathcal M(k)$ denote the set of isomorphism classes of these tuples
	$
	(\Sigma,\partial\Sigma;\, z_0,\dots,z_k;\, u)$ which we refer to as the \textbf{\textit{moduli space}} of pseudo-holomorphic stable maps. Fukaya-Oh-Ohta-Ono use these moduli spaces, together with their analytic theory of Kuranishi structures, to construct an $A_\infty$ algebra structure on the de Rham complex $\Omega^*(L)$; see \cite{FOOOBookOne,FOOO_Kuranishi,FOOODiskOne,FOOODiskTwo}.

	\textit{At the level of topology}, the structure of the moduli space $\mathcal M(k)$ is well understood. First, we know that $\mathcal M(k)$ is a compact Hausdorff space \cite[Theorem 2.1.29]{FOOOBookOne}. Besides, there are natural continuous evaluation maps
	\begin{equation}
		\label{ev_i_embedded_intro}
		\ev_i: \mathcal M(k) \to L \ , \quad i=0,1,\dots, k
	\end{equation}
	defined by sending the isomorphism class of tuple $(\Sigma,\partial\Sigma; z_0,\dots, z_k; u)$ to the point $u(z_i)\in L$.
The following diagram illustrates the structure of the multicategory (colored operad) in Proposition \ref{moduli_multicategory_prop_intro}: 	
\[
\xymatrix{
	& \mathcal M(k) \ar[dr]^{\ev_0} \ar[dl]_{(\ev_1,\dots, \ev_k)}& \\
L^{\times k} & & L
}
\]
Here \(\ev_0\) serves as the target map, and \((\ev_1,\dots,\ev_k)\) serves as the source map for the multicategory structure.


Following the above discussion, our basic observation is that if one allows several Lagrangians (rather than a single one), or allows the Lagrangian to be immersed, one is naturally led to certain ``higher'' operadic objects, which may provide a more natural formalism for encoding the resulting $A_\infty$-type algebraic structures than the $A_\infty$ operad alone.
	Specifically, we can achieve the following:
	
	\begin{thm}[Theorem \ref{fc_moduli_unlabeled_thm}]
		\label{fc_moduli_introduction_thm}
	Let $(X,\omega)$ be a closed symplectic manifold. Let $\iota: L \to X$ be a Lagrangian immersion, i.e. $\iota^*\omega=0$.
		Then, the collection $\mathscr M_\iota$ of moduli spaces of pseudo-holomorphic polygons bounded by $\iota(L)$ naturally forms a topological $\mathbf{fc}$-multicategory.
	\end{thm}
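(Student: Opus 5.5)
The plan is to write down the data of an $\mathbf{fc}$-multicategory explicitly and check the axioms using only the topology of the moduli spaces (compactness, evaluation maps, gluing at boundary nodes), not any analytic input. I will use Leinster's description: an $\mathbf{fc}$-multicategory consists of a set of objects, vertical arrows, horizontal arrows, and 2-cells whose source is a path $m_1,\dots,m_k$ of horizontal arrows, whose target is a single horizontal arrow $m$, and whose two sides are vertical arrows. It also carries associative and unital composition of 2-cells along a compatible grid.

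First I choose the underlying $\mathbf{fc}$-graph. The self-intersection locus of $\iota$ is $R=\{(p,q)\in L\times L: p\ne q,\ \iota(p)=\iota(q)\}$. An immersed polygon has boundary that lifts to $L$ except at corners, where the boundary jumps between two branches. So I take:
\begin{itemize}
\item objects: the points of $L$;
\item vertical arrows: only identities, so that the vertical category is discrete, or possibly the space $L$ itself;
\item horizontal arrows: the set $L\sqcup R$, where a point $x\in L$ is an arrow $x\to x$ (a smooth marked point) and a point $(p,q)\in R$ is an arrow $p\to q$ (a switching corner);
\item 2-cells: $\mathscr M_\iota$ is the disjoint union, over $k$ and over corner data $\vec\sigma=(\sigma_0,\dots,\sigma_k)$ with $\sigma_i\in\{\text{smooth}\}\sqcup R$, of the moduli spaces $\mathcal M(k;\vec\sigma;\beta)$ of stable polygons.
\end{itemize}
The source map is $(\ev_1,\dots,\ev_k)$, now valued in horizontal arrows, with the lift of the boundary providing the branch data. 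The target map is $\ev_0$. These maps are continuous by the same argument as in \eqref{ev_i_embedded_intro}. The main point to verify here is consistency: consecutive inputs must compose as a path. The outgoing branch at $z_i$ must equal the incoming branch at $z_{i+1}$, because the boundary arc between them lifts to a single path in $L$; this requires taking the objects to be branches, that is, points of $L$, not points of $X$.

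Next I define composition. Given a 2-cell $u\in\mathcal M(k)$ and 2-cells $u_1,\dots,u_k$ with $\ev_0(u_i)=\ev_i(u)$ as horizontal arrows, I form the nodal stable map obtained by attaching each $u_i$ to $u$ at $z_i$ along its outgoing point. This is the standard boundary-node gluing used for Proposition \ref{moduli_multicategory_prop_intro}, now done at corners. Matching corner types guarantees that the glued boundary still lifts compatibly, including a switching corner at the node. I then check three things:
\begin{itemize}
\item the result is stable;
\item the construction is independent of representatives, so it descends to isomorphism classes;
\item the map $\prod\mathcal M\times_{\text{arrows}}\mathcal M\to\mathcal M$ is continuous, using the stable-map topology, for which node-attachment is continuous on fibre products.
\end{itemize}
Because vertical arrows are identities, the vertical sides compose trivially.

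For the identity 2-cells, each horizontal arrow needs a unit. There are two options. One is to allow the unstable ``trivial strip'', a constant polygon with two marked points, as a formal unit. The other is to adjoin units formally, as one does for the associahedra, where $K_1$ is a point. I would adjoin formal identities by declaring $\mathcal M(1;\sigma,\sigma;0)$ to contain an added point. Associativity then reduces to the fact that iterated node attachment along a tree of disks yields the same nodal curve regardless of bracketing, since both produce the same tree-shaped nodal surface.

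I expect the main obstacle to be the first step: making the bookkeeping of branch jumps at corners precise enough that the source path is well defined and composition is closed, especially when corner data switch at a glued node and when bubbling produces constant components at corners. Units need care too, because the genuine moduli space of one-input polygons with zero energy is not stable. I would try to handle both by carefully defining the corner type via the lift of $\partial\Sigma\setminus\{z_i\}$ to $L$ and checking it under gluing.
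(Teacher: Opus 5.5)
There is a genuine gap in your choice of $0$-cells. You take the objects to be the points of $L$, and you argue that the outgoing branch at $z_i$ equals the incoming branch at $z_{i+1}$ because the boundary arc between them lifts to a single path in $L$. But $\ev_i(\mathbf u)=(\gamma(z_i-),\gamma(z_i+))=(\gamma_{i-1}(+\infty),\gamma_i(-\infty))$. So the arrow $\ev_i(\mathbf u)$ has target $\gamma_i(-\infty)$, while $\ev_{i+1}(\mathbf u)$ has source $\gamma_i(+\infty)$. These are the two endpoints of the boundary path $\gamma_i$, and they are different points of $L$ in general. Consequently:
\begin{itemize}
\item $(\ev_1(\mathbf u),\dots,\ev_k(\mathbf u))$ is not a composable string in your graph, so your domain map does not land in $\mathbf{fc}(E)$.
\item The output $\ev_0(\mathbf u)=(\gamma_0(-\infty),\gamma_k(+\infty))$ does not share endpoints with the input string. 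Your option of non-identity vertical arrows could at best absorb the side gaps $\gamma_0,\gamma_k$; the interior gaps $\gamma_1,\dots,\gamma_{k-1}$ sit between consecutive inputs, where there is no vertical data at all.
\item For $k=0$, a corner at $z_0$ gives an output arrow that is not even a loop.
\end{itemize}
The failure already appears for an embedded connected $L$. A disk with inputs $u(z_1)\neq u(z_2)$ would have as source two loops at different vertices, and these do not compose. Connectedness of the arc $\gamma_i$ only tells you that its endpoints lie in the same path component of $L$. You flagged this bookkeeping as the main obstacle, but resolved it in the wrong direction.

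The missing idea is to coarsen the $0$-cells to $\pi_0(L)$. Keep the horizontal $1$-cells as $L\times_X L$, which is your $L\sqcup R$, and set $s=\rho\circ\pr_1$, $t=\rho\circ\pr_2$, where $\rho\colon L\to\pi_0(L)$. Then Lemma~\ref{M_pseudo_loop_lem} applies: each $\gamma_j$ lies in a single component $L_{v_j}$, so $\ev_i$ runs from $v_{i-1}$ to $v_i$ and $\ev_0$ from $v_0$ to $v_n$. Hence every nonempty $\mathcal M(e_1,\dots,e_n;e_0)$ lies over a profile-loop, and the structure is vertically discrete. In the embedded connected case this gives a single $0$-cell and recovers the suspension (Example~\ref{suspension_ex}) of the multicategory in Proposition~\ref{moduli_multicategory_prop_intro}. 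With this change, the rest of your outline matches the paper's argument, which reuses Proposition~\ref{multicategory_moduli_prop}: gluing at boundary nodes when $\ev_0(\mathbf u_i)=\ev_i(\mathbf u)$, formally adjoined units, and associativity from tree-shaped nodal surfaces.
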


An \textit{$\mathbf{fc}$-multicategory} \cite{leinster1999fc,leinster2004higher} can be regarded as an operad-like structure in which operations are indexed not by rooted trees, but by two-dimensional pasting diagrams. It consists of 0-cells, 1-cells, and 2-cells subject to suitable matching conditions; see Section~\ref{s_fc_multicategory} for more details. In Figure~\ref{figure:fc_from_2_cell_to_pseudo_holo}, the 0-cells are the $v_i$, the (horizontal) 1-cells are the edges $e_i$, and the 2-cell is $\mathbf u$. The key observation is that the composition of 2-cells appears to be compatible with the gluing of pseudo-holomorphic polygons in symplectic geometry (cf. Figure~\ref{figure:fc_from_2_cell_to_pseudo_holo}).

Let us also explain the relevant notions for moduli spaces. 
	As in the embedded case above but with some essential modification for the immersed case, we consider tuples
	$(\Sigma,\partial\Sigma; z_0,\dots,z_k; u,\gamma)$
	where $u:(\Sigma,\partial\Sigma)\to (X,\iota(L))$ is as before and $\gamma: \partial\Sigma\setminus\{z_0,\dots, z_k\} \to L$ is the extra data for a continuous lift of $u$ with $\iota \circ\gamma=u$.
Unlike \eqref{ev_i_embedded_intro}, in this immersed setting, the evaluation maps
	\[
	\ev_i:\mathcal M(k)\to L\times_X L \ ,  \quad i=0,1,\dots,k.
	\]
	take values in the fiber product $L\times_X L =L\times_\iota L= \{(p,q)\in L\times L \mid \iota(p)=\iota(q)\}$ and is defined by sending the isomorphism class of a tuple $(\Sigma,\partial\Sigma; z_0,\dots,z_k; u,\gamma)$ to the point $(\gamma(z_i-),\gamma(z_i+))$
	where $\gamma(z_i\pm)$ are the ``one-sided limits'' taken along $\partial\Sigma$ with respect to the induced boundary orientation; see \cite[Definition 3.17]{FuUnobstructed} for more details.
	In particular, the oriented boundary arc of $\partial\Sigma$ from $z_i$ to $z_{i+1}$ is mapped by $\gamma$ to a path $\gamma_i\subset L$, which naturally suggests keeping track of the (path-)connected components $L_v$ of $L$ with $L=\bigsqcup_v L_v$. The fiber product $L\times_X L$ then inherits a corresponding decomposition, and hence each evaluation map
	$\ev_i$
	induces a further decomposition of the moduli space $\mathcal M(k)$. This finer decomposition suggests that the aforementioned multicategory structure may admit a corresponding refinement.

\begin{rmk}
A typical example of $\iota$ is as follows. Let $\{L_v\}_{v\in V}$ be a collection of connected embedded Lagrangian submanifolds which intersect pairwise transversely. Set
	$
	L:=\bigsqcup_{v\in V} L_v$, and the natural inclusion defines a Lagrangian immersion $\iota:L\to X$.
Then, in Theorem~\ref{fc_moduli_unlabeled_thm} the $0$-cells are given by the index set $V$, the $1$-cells are intersection points in $L_v\cap L_{v'}$, and the $2$-cells are pseudo-holomorphic polygons.
\end{rmk}

\begin{rmk}
A concrete situation in symplectic geometry where it may be useful to introduce $\mathbf{fc}$-multicategory structures on moduli spaces is the following. We wish to distinguish (i) Lagrangian Floer theory for a pair $(L_0,L_1)$ from (ii) the Fukaya category with object set ${L_0,L_1}$. If we denote the corresponding collections of moduli spaces by $\mathscr M_1$ and $\mathscr M_2$, then $\mathscr M_1\subset \mathscr M_2$ and actually $\mathscr M_1$ forms a full $\mathbf{fc}$-submulticategory of $\mathscr M_2$; see Section \ref{s_Gromov_factor_closed} for relevant discussion.
\end{rmk}

	\begin{figure}[h]
		\centering
		\begin{tikzpicture}[x=0.75pt,y=0.75pt,yscale=-1,xscale=1]
			
			\draw [color={rgb, 255:red, 155; green, 155; blue, 155 }  ,draw opacity=1 ]   (527.72,226.76) -- (161.9,225.84) ;
			\draw [shift={(159.9,225.84)}, rotate = 0.14] [color={rgb, 255:red, 155; green, 155; blue, 155 }  ,draw opacity=1 ][line width=0.75]    (10.93,-3.29) .. controls (6.95,-1.4) and (3.31,-0.3) .. (0,0) .. controls (3.31,0.3) and (6.95,1.4) .. (10.93,3.29)   ;
			\draw [color={rgb, 255:red, 155; green, 155; blue, 155 }  ,draw opacity=1 ]   (458.62,114.1) -- (287.4,74.63) ;
			\draw [shift={(285.45,74.19)}, rotate = 12.98] [color={rgb, 255:red, 155; green, 155; blue, 155 }  ,draw opacity=1 ][line width=0.75]    (10.93,-3.29) .. controls (6.95,-1.4) and (3.31,-0.3) .. (0,0) .. controls (3.31,0.3) and (6.95,1.4) .. (10.93,3.29)   ;
			\draw [color={rgb, 255:red, 155; green, 155; blue, 155 }  ,draw opacity=1 ]   (254.95,77.99) -- (203.63,101.85) ;
			\draw [shift={(201.82,102.7)}, rotate = 335.06] [color={rgb, 255:red, 155; green, 155; blue, 155 }  ,draw opacity=1 ][line width=0.75]    (10.93,-3.29) .. controls (6.95,-1.4) and (3.31,-0.3) .. (0,0) .. controls (3.31,0.3) and (6.95,1.4) .. (10.93,3.29)   ;
			\draw [color={rgb, 255:red, 155; green, 155; blue, 155 }  ,draw opacity=1 ]   (194.93,107.45) -- (146.59,210.19) ;
			\draw [shift={(145.74,212)}, rotate = 295.2] [color={rgb, 255:red, 155; green, 155; blue, 155 }  ,draw opacity=1 ][line width=0.75]    (10.93,-3.29) .. controls (6.95,-1.4) and (3.31,-0.3) .. (0,0) .. controls (3.31,0.3) and (6.95,1.4) .. (10.93,3.29)   ;
			\draw [color={rgb, 255:red, 155; green, 155; blue, 155 }  ,draw opacity=1 ]   (538.32,211.05) -- (488.24,133.84) ;
			\draw [shift={(487.15,132.16)}, rotate = 57.03] [color={rgb, 255:red, 155; green, 155; blue, 155 }  ,draw opacity=1 ][line width=0.75]    (10.93,-3.29) .. controls (6.95,-1.4) and (3.31,-0.3) .. (0,0) .. controls (3.31,0.3) and (6.95,1.4) .. (10.93,3.29)   ;
			\draw [color={rgb, 255:red, 74; green, 144; blue, 226 }  ,draw opacity=1 ][line width=1.5]    (209.26,46.77) .. controls (270.64,143.92) and (368.55,95.73) .. (407.41,41.33) ;
			\draw [color={rgb, 255:red, 74; green, 144; blue, 226 }  ,draw opacity=1 ][line width=1.5]    (407.41,41.33) .. controls (415.95,138.48) and (489,185.89) .. (608.67,179.67) ;
			\draw [color={rgb, 255:red, 74; green, 144; blue, 226 }  ,draw opacity=1 ][line width=1.5]    (339.03,322.67) .. controls (333.59,261.27) and (482.78,199.1) .. (608.67,179.67) ;
			\draw [color={rgb, 255:red, 74; green, 144; blue, 226 }  ,draw opacity=1 ][line width=1.5]    (125.33,182.78) .. controls (251.22,198.32) and (316.49,246.5) .. (339.03,322.67) ;
			\draw [color={rgb, 255:red, 74; green, 144; blue, 226 }  ,draw opacity=1 ][line width=1.5]    (125.33,182.78) .. controls (231.01,191.33) and (241.12,120.6) .. (209.26,46.77) ;
			
			\draw (135.04,213.75) node [anchor=north west][inner sep=0.75pt]  [color={rgb, 255:red, 155; green, 155; blue, 155 }  ,opacity=1 ]  {$v_{n}$};
			\draw (536.43,215.21) node [anchor=north west][inner sep=0.75pt]  [color={rgb, 255:red, 155; green, 155; blue, 155 }  ,opacity=1 ]  {$v_{0}$};
			\draw (262.99,63.27) node [anchor=north west][inner sep=0.75pt]  [color={rgb, 255:red, 155; green, 155; blue, 155 }  ,opacity=1 ]  {$v_{i}$};
			\draw (360,100) node [anchor=north west][inner sep=0.75pt]  [color={rgb, 255:red, 155; green, 155; blue, 155 }  ,opacity=1 ]  {$e_{i}$};
			\draw (465.39,107.16) node [anchor=north west][inner sep=0.75pt]  [color={rgb, 255:red, 155; green, 155; blue, 155 }  ,opacity=1 ]  {$v_{i-1}$};
			\draw (331.83,231.53) node [anchor=north west][inner sep=0.75pt]  [color={rgb, 255:red, 155; green, 155; blue, 155 }  ,opacity=1 ]  {$e_{0}$};
			\draw (331.21,153.74) node [anchor=north west][inner sep=0.75pt]  [color={rgb, 255:red, 74; green, 144; blue, 226 }  ,opacity=1 ]  {$\mathbf{u}$};
			\draw (408.9,245.98) node [anchor=north west][inner sep=0.75pt]  [color={rgb, 255:red, 74; green, 144; blue, 226 }  ,opacity=1 ]  {$\gamma _{0}$};
			\draw (260.56,243.65) node [anchor=north west][inner sep=0.75pt]  [color={rgb, 255:red, 74; green, 144; blue, 226 }  ,opacity=1 ]  {$\gamma _{n}$};
			\draw (339.57,57.91) node [anchor=north west][inner sep=0.75pt]  [color={rgb, 255:red, 74; green, 144; blue, 226 }  ,opacity=1 ]  {$\gamma _{i}$};
			\draw (428.46,69.57) node [anchor=north west][inner sep=0.75pt]  [color={rgb, 255:red, 74; green, 144; blue, 226 }  ,opacity=1 ]  {$\gamma _{i-1}$};
		\end{tikzpicture}
		\caption{In Theorem \ref{fc_moduli_introduction_thm}, a pseudo-holomorphic polygon with boundary paths $\gamma_i$ in the Lagrangian component $L_{v_i}$ (blue) is viewed as a \(2\)-cell diagram in the corresponding \(\mathbf{fc}\)-multicategory (gray), and an intersection point in $L_{v_i}\cap L_{v_{i+1}}$ is viewed as a horizontal 1-cell.}
		\label{figure:fc_from_2_cell_to_pseudo_holo}
	\end{figure}
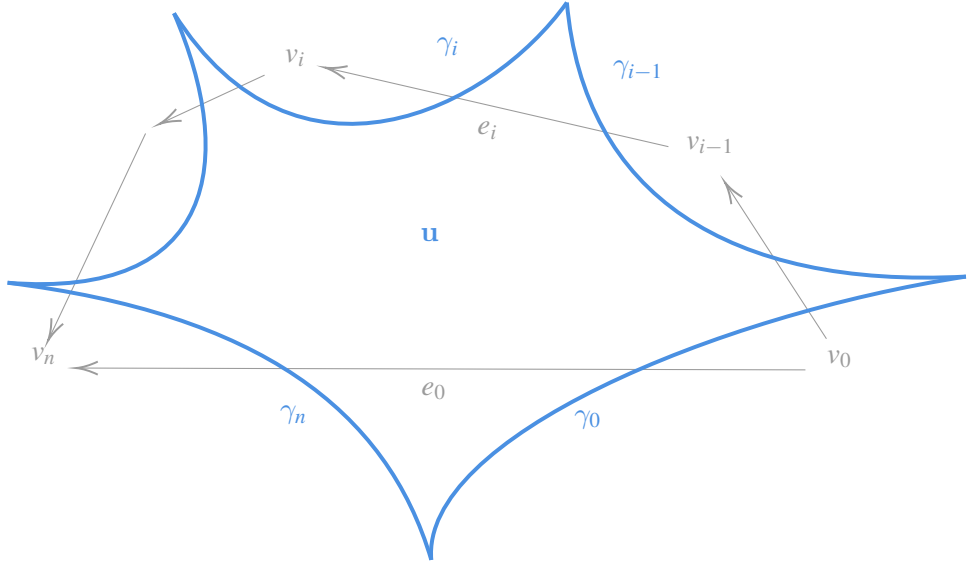

\subsection*{Algebraic implications}

Now, let's handle Question (II) at the start of introduction.
Following the ideas of Leinster in \cite{leinster2004higher}, we will develop the theory of \textit{differential graded (dg) $\mathbf{fc}$-multicategories} and \textit{algebras over dg $\mathbf{fc}$-multicategories} (see \S \ref{s_A_inf_as_algebra_for_dg_fc}).
Based on these notions, our main observations are the following:

\begin{thm}
\label{dg_fc_thm_intro}
There exist dg $\mathbf{fc}$-multicategories whose algebras recover the notions of $A_\infty$ algebras, $A_\infty$ categories, $A_\infty$ left/right modules, and $A_\infty$ bimodules.
\end{thm}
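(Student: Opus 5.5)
The plan is to build a single dg $\mathbf{fc}$-multicategory $\mathcal A^{\mathbf{fc}}_\infty(V)$ that depends on a set $V$ of $0$-cells and a directed graph $E$ on $V$ of horizontal $1$-cells, and then recover each listed structure by choosing $(V,E)$ and taking algebras. This imitates the way the cellular chains on the associahedra present $\mathcal A_\infty$. The $0$-cells are the elements $v\in V$, and the horizontal $1$-cells are the edges $e\colon v\to v'$ in $E$. For every $n\ge 0$ and every string of $1$-cells $e_1,\dots,e_n$ composable in $\mathbf{fc}(E)$, from $v_0$ to $v_n$, and every target $1$-cell $e_0\colon v_0\to v_n$, I take the graded space of $2$-cells with that frame to be spanned by one free generator $\mathbf m_n^{(e_1,\dots,e_n;e_0)}$ of degree $2-n$. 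In the curved case this includes $n=0$, i.e.\ loops at $v_0$. The full space of $2$-cells is then the free dg $\mathbf{fc}$-multicategory on these generators: its elements are formal pasting diagrams, that is, planar trees labelled compatibly by $0$- and $1$-cells. The differential is set on generators by
\[
\delta(\mathbf m_n)=\pm\sum_{r+s+t=n}\mathbf m_{r+1+t}\circ_{r+1}\mathbf m_s ,
\]
where the sum also runs over every choice of intermediate $1$-cell $e'$ joining the two polygons. It is extended by the Leibniz rule for $\mathbf{fc}$-multicategorical composition. This formula is exactly the chain-level shadow of the gluing of polygons in Theorem~\ref{fc_moduli_introduction_thm}.

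The first thing to verify is that $\delta^2=0$. I would argue this exactly as for $\mathcal A_\infty$: in $\delta^2(\mathbf m_n)$ the terms pair up as two-step degenerations of a polygon. The extra labelling by $1$-cells does not affect the cancellation, because each pair of cancelling terms carries the same labelled frame. The second step is to define the endomorphism dg $\mathbf{fc}$-multicategory $\End(\mathcal X)$ of a family of cochain complexes $\mathcal X=\{X_e\}_{e\in E}$. Its $2$-cells with frame $(e_1,\dots,e_n;e_0)$ are $\Hom(X_{e_n}\otimes\cdots\otimes X_{e_1},X_{e_0})$, composed by substitution. An algebra is then a morphism $\mathcal A^{\mathbf{fc}}_\infty\to\End(\mathcal X)$ that is the identity on $0$- and $1$-cells. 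By freeness, such a morphism is the same thing as a choice of images $\mu_n^{(e_\bullet;e_0)}$ of the generators, and compatibility with $\delta$ is exactly the $A_\infty$ relations.

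Recovering each structure is then a matter of choosing $(V,E)$. Taking $V$ to be one point and $E$ one loop gives $A_\infty$ algebras, and this collapses to the operad \eqref{operad_A_inf_intro_eq}. Taking $V$ to be the object set and $E=V\times V$, with $X_{(v,v')}=\hom(v,v')$, gives $A_\infty$ categories. For left or right modules, I use two $0$-cells and arrange the $1$-cells so that the module edge must occur exactly once, in the first or last slot. For bimodules, I use two algebra loops $a,b$ together with a module edge $m$, and only allow frames with exactly one $m$ among the inputs. This is achieved by passing to a suitable full sub-multicategory, or by declaring the generators with other frames to be absent.

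I expect the main obstacle to be making the module and bimodule cases honest. The free construction has to be restricted so that the module $1$-cell appears exactly once among the inputs and as the output, and so that compositions stay inside the restricted $2$-cells. This requires checking closure of $\delta$ on that sub-collection: gluing a pure-algebra polygon into a module polygon must again give a module polygon. A further point is to get the Koszul signs right, including those arising from the degree shift. I would try first to do this using the Leinster-style $T$-graph formalism, defining the restricted graph as $E$ with a grading by the number of module edges.
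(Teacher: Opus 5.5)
Your construction is essentially the paper's $\mathcal A_{\infty,E}$ (Definition~\ref{generalized_Ainf_defn} with $\mathbb S=\pmb 0_E$ or $\pmb 0_E^{red}$). It has one free generator per profile-loop, a differential given by splitting through an intermediate $1$-cell, algebras defined as maps into $\widehat{\mathbf{End}}(X)$ for simple $X$, and each structure obtained by choosing the graph. Your degrees $2-n$ versus the paper's shifted degrees $|\mathbf m|=1$ are a cosmetic difference.

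There is one slip to fix. Taken literally, your recipe puts a generator $\mathbf m_1^{(e;e)}$ in every identity frame $(e;e)$. The paper explicitly excludes these ($\beta\neq\theta_e$) and lets $\mathsf d_e$ play the role of $\mu_1$, as in \eqref{n_00_eq}. If you keep them, an algebra records an extra degree-one map $\mu_1$ on each $X_e$ besides $\mathsf d_e$. The identity $\widehat{\mathsf d}\alpha(\mathbf m)=\alpha(\delta\mathbf m)$ then gives the $A_\infty$ relations with $\mathsf d_e+\mu_1$, not $\mathsf d_e$, as the first operation. So you would not get an equivalence with $A_\infty$ structures on $(X_e,\mathsf d_e)$. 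Only the identity frames must go; generators in frames $(e_1;e_0)$ with $e_1\neq e_0$ parallel edges are kept.

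The obstacle you anticipate for bimodules does not arise if the graph is chosen as your wording suggests and as the paper does in \S\ref{s_Ainf_bimodule_alg}. Put the loop $a$ at $v_0$, the loop $b$ at $v_1$, the module edge $m\colon v_0\to v_1$, and no edge from $v_1$ to $v_0$. Every composable path from $v_0$ to $v_1$ then crosses $m$ exactly once, so the only profile-loops are $(a^n;a)$, $(b^n;b)$ and $(a^{n_0},m,b^{n_1};m)$. The intermediate $1$-cell in any splitting is forced by its endpoints. Hence the unrestricted free construction already has exactly the right frames, and no restriction or $\delta$-closure check is needed. This is the same mechanism you already use for one-sided modules.

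In a one-vertex model with three loops, by contrast, the restricted sub-collection is not $\delta$-closed. For example, $(a,a,m;m)$ splits as $(m,m;m)\circ_1(a,a;m)$. So neither a full sub-multicategory nor a sub-dg-object works there, and you would have to rebuild the free object on the restricted generators.

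Finally, your two-vertex choice for one-sided modules gives modules over an $A_\infty$ algebra. For modules over an $A_\infty$ category, the paper adjoins a single vertex $\ast$ to $V$ with edges $\ast\to v$ (or $v\to\ast'$). For bimodules over two categories it uses the graph $(V'\times V')\sqcup(V'\times V'')\sqcup(V''\times V'')$.
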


The basic idea behind this theorem is as follows. One may regard the standard dg operad $\mathcal A_\infty$ as arising from Stasheff’s associahedra by ``shrinking each cell to a point.'' Each codimension-one face of $K_n$ is of the form $K_{r+1+t}\times K_s$, corresponding to grafting. Thus the cellular boundary of the top cell gives rise to the differential $\delta$ on $\mathcal A_\infty$, endowing it with the structure of a dg operad. The $A_\infty$ relations are precisely the operadic expression of the identity $\partial^2=0$ for the cellular chains of the associahedra.

In view of \eqref{operad_A_inf_intro_eq}, an $A_\infty$ algebra is equivalently a morphism
$\alpha:\mathcal A_\infty \to \End(X)$
of dg operads.
Motivated by this, one may ask whether an analogous procedure of ``shrinking each cell to a point'' can be applied to the moduli spaces in $\mathscr M_\iota$ appearing in Theorem~\ref{fc_moduli_introduction_thm}.

Since the 0-cells and (horizontal) 1-cells of an $\mathbf{fc}$-multicategory form a directed graph, one is naturally led to try to construct an $\mathbf{fc}$-multicategory in which, for each prescribed boundary profile of 1-cells, the corresponding space of 2-cells is a singleton. In this way, one expects to obtain generalized versions of Stasheff’s dg operad $\mathcal A_\infty$. These are essentially the dg $\mathbf{fc}$-multicategories mentioned in Theorem~\ref{dg_fc_thm_intro}, whose algebras are $A_\infty$ categories.
Moreover, by restricting to suitable subcollections of $\mathscr M_\iota$, one can also construct dg $\mathbf{fc}$-submulticategories. These include, for example, dg $\mathbf{fc}$-multicategories whose algebras are $A_\infty$ bimodules. More exotic $A_\infty$-type structures also fit naturally into this framework, such as $A_\infty$ tri-modules and, more generally, $A_\infty$ $k$-modules for arbitrary $k>0$. We refer to Section~\ref{s_Ainf_type} for further examples and discussion.


The paper is organized as follows. In Section~\ref{s_operads_multicategories}, we review operads and multicategories, and describe the multicategory structure on moduli spaces of pseudo-holomorphic disks bounded by a single Lagrangian in Proposition~\ref{moduli_multicategory_prop_intro}. In Section \ref{s_fc_multicategory}, we review $\mathbf{fc}$-multicategories and unpacks their structure into explicit data. In Section \ref{s_moduli_fc_multicategories}, we discuss Theorem \ref{fc_moduli_introduction_thm} and briefly indicate how $\mathbf{fc}$-multicategories enter symplectic geometry. In Section \ref{s_A_inf_as_algebra_for_dg_fc}, we review algebras over $\mathbf{fc}$-multicategories, proposes dg $\mathbf{fc}$-multicategories, and introduces algebras over dg $\mathbf{fc}$-multicategories. Finally, in Section~\ref{s_Ainf_type} we address Theorem \ref{dg_fc_thm_intro} and explain how to recover various classical $A_\infty$-type structures as algebras over dg $\mathbf{fc}$-multicategories.

\subsection*{Acknowledgment}
The author would like to thank T. Leinster for helpful and thoughtful email correspondence.

	\section{Operads and multicategories}
	\label{s_operads_multicategories}
	
	In this section, we recall the general abstract frameworks for operads and multicategories, which provide a concise and clean formulation once one is familiar with the abstract language. Our main reference is Leinster's book \cite{leinster2004higher}; see also \cite{leinster1998general,leinster1999fc,hermida2000representable,loday2012algebraic,markl2002operads}. For a first reading, one may consult Definition \ref{S_labeled_multicategory_defn} directly, where we adopt the more classical and explicit description.

	\subsection{Monads}
	\label{s_monad}

	A {\textit{terminal object}} in a category $\mathcal{E}$ is an object $\mathbf 1$ such that for every $X\in\mathcal{E}$ there exists a unique morphism $X\to\mathbf 1$. For example, in the category $\mathbf{Set}$ of sets, a one-point set $\{\ast\}$ is a terminal object.
	Given objects $X,Y\in\mathcal{E}$, a {\emph{binary product}} is an object
	$X\times Y$ equipped with projections
	$\pr_1:X\times Y\to X$ and $\pr_2:X\times Y\to Y$
	such that for every $Z$ and morphisms $f:Z\to X$, $g:Z\to Y$ there exists a unique
	$\pair{f}{g}:Z\to X\times Y$ with $\pr_1\circ\pair{f}{g}=f$ and
	$\pr_2\circ\pair{f}{g}=g$.

	Given morphisms $f:X\to S$ and $g:Y\to S$ in $\mathcal{E}$, a {\emph{pullback}} (or \textit{fiber product})
	is an object $X\times_S Y$ with morphisms $\pi_X:X\times_S Y\to X$ and
	$\pi_Y:X\times_S Y\to Y$ making the square commute $f\circ\pi_X=g\circ\pi_Y$ and satisfying the universal property:
	for any object $Z$ with maps $u:Z\to X$, $v:Z\to Y$ such that $f\circ u=g\circ v$,
	there exists a unique $h:Z\to X\times_S Y$ with $\pi_X\circ h=u$ and $\pi_Y\circ h=v$.
	\[
	\xymatrix{
		Z \ar[drr]^{v} \ar[ddr]_{u} \ar@{-->}[dr]|-{\exists ! h} & & \\
		& X\times_S Y \ar[r]^{\pi_Y} \ar[d]_{\pi_X} & Y \ar[d]^{g} \\
		& X \ar[r]_{f} & S
	}
	\]
	We say a category $\mathcal{E}$ \emph{has finite limits} if it has a terminal object,
	all binary products, and all pullbacks (equivalently: all limits of finite diagrams).
	The condition that $\mathcal E$ has all finite limits is equivalent to that $\mathcal E$ has a terminal object and all pullbacks.
	In particular, binary products can be constructed as pullbacks over the terminal object.
	
	A {\emph{cartesian category}} is a category $\mathcal{E}$ that has a terminal object $\mathbf 1$ and all pullbacks.
	A functor $F:\mathcal E\to\mathcal E$ is \emph{cartesian} if it preserves pullbacks, namely,
	for every pullback square in $\mathcal E$, its image under $F$ is again a pullback square.
	Let $F,G:\mathcal E\to\mathcal E$ be functors. A natural transformation
	$\alpha:F\Rightarrow G$ is \emph{cartesian} if for every morphism $f:X\to Y$ in $\mathcal E$,
	the naturality square
	\[
	\xymatrix{
		F(X) \ar[r]^{F(f)} \ar[d]_{\alpha_X} & F(Y) \ar[d]^{\alpha_Y} \\
		G(X) \ar[r]_{G(f)} & G(Y)
	}
	\]
	is a pullback square in $\mathcal E$.
	See \cite[Definition 4.1.1]{leinster2004higher}.

	\begin{ex}[Cartesian categories]
		\label{ex_cartesian_categories}
		The category $\mathbf{Set}$ of sets is cartesian: the terminal object is $1=\{*\}$ and pullbacks exist.
		The category $\mathbf{Top}$ of topological spaces is cartesian: the terminal object is a one-point space, and pullbacks are as usual.
		For any cartesian $\mathcal E$ and any object $B\in\mathcal E$, the slice category $\mathcal E{/}B$ (whose objects are maps $X\to B$ for $X\in\mathcal E$ and morphisms $f:(X\!\xrightarrow{p}\!B)\to(Y\!\xrightarrow{q}\!B)$ are maps $f:X\to Y$ in $\mathcal E$ with $q\circ f=p$.) is cartesian. The terminal object is $\mathrm{id}_B:B\to B$, and their pullbacks (or \emph{fiber product over $B$}) is
		$X\times_B Y$ fitting into the usual pullback square in $\mathcal E$.
		In particular, $\mathbf{Top}{/}B$ is cartesian with fiber products $X\times_B Y$.
	\end{ex}

	A \emph{monad} on $\mathcal E$ is a triple $(T,\eta,\mu)$ consisting of an endofunctor
	$T:\mathcal E\to\mathcal E$, a unit $\eta:\mathrm{Id}_{\mathcal E}\Rightarrow T$, and
	a multiplication $\mu:T\!\circ T\Rightarrow T$ such that the usual associativity and unit
	axioms hold:
	\[
	\mu\circ T\mu \;=\; \mu\circ \mu T, \qquad
	\mu\circ T\eta \;=\; \mathrm{id}_T \;=\; \mu\circ \eta T .
	\]
	Specifically, for every object $X$, we have
	\[
	\begin{gathered}
		\xymatrix@C=48pt{
			T^3X \ar[r]^{T\mu_X} \ar[d]_{\mu_{T X}} & T^2X \ar[d]^{\mu_X} \\
			T^2X \ar[r]_{\mu_X} & T X
		}
	\end{gathered}
	\]
	\[
	\begin{gathered}
		\xymatrix@C=48pt{
			T X \ar[r]^{T\eta_X} \ar[dr]_{\mathrm{id}_{T X}} & T^2 X \ar[d]^{\mu_X} \\
			& T X
		}
		\qquad
		\xymatrix@C=48pt{
			T X \ar[r]^{\eta_{T X}} \ar[dr]_{\mathrm{id}_{T X}} & T^2 X \ar[d]^{\mu_X} \\
			& T X
		}
	\end{gathered}
	\]
	A monad $(T,\eta,\mu)$ on $\mathcal E$ is \emph{cartesian}
	if (i) the functor $T$ is cartesian, and
	(ii) the natural transformations $\eta$ and $\mu$ are cartesian.
	Equivalently, $T$ preserves pullbacks and, for every morphism $f:X\to Y$, both
	naturality squares
	\[
	\xymatrix{
		X \ar[r]^{f} \ar[d]_{\eta_X} & Y \ar[d]^{\eta_Y} \\
		T X \ar[r]_{T f} & T Y
	}
	\qquad
	\xymatrix{
		T T X \ar[r]^{T T f} \ar[d]_{\mu_X} & T T Y \ar[d]^{\mu_Y} \\
		T X \ar[r]_{T f} & T Y
	}
	\]
	are pullbacks.

	For any cartesian $\mathcal E$, the identity monad $(\mathrm{Id},\eta=\mathrm{id},\mu=\mathrm{id})$ is cartesian.
	Besides, the \textit{free monoid monad} $(T,\eta,\mu)$ on the cartesian category $\mathcal E= \mathbf{Top}$ is defined as follows.
	Define an endofunctor $T:\mathcal E\to\mathcal E$ by
	\begin{equation}
		\label{list_monad_eq}
		T X \;:=\; \coprod_{n\ge 0} X^{\times n}
	\end{equation}
	with the convention $X^{\times 0}$ is the terminal object, the one-point space. On a morphism $f:X\to Y$, define
	\[
	T f \;=\; \coprod_{n\ge 0} f^{\times n}:\ \coprod_{n\ge 0} X^{\times n}\longrightarrow \coprod_{n\ge 0} Y^{\times n}.
	\]
	The unit $\eta:\mathrm{Id}\Rightarrow T$ and multiplication $\mu:T^2\Rightarrow T$ are:
	\begin{align*}
		\eta_X &: X \longrightarrow T X,\qquad x\longmapsto [x]\in X^{\times 1}\subseteq TX,\\
		\mu_X &: T(TX) \longrightarrow TX,
	\end{align*}
	where \(\mu_X\) flattens a list of lists by concatenation.
	Concretely, 
	\[
	T(TX)
	\;=\;
	\coprod_{k\ge 0} (TX)^{\times k}
	\;\cong\;
	\coprod_{k\ge 0}\ \coprod_{(n_1,\dots,n_k)\in\mathbb N^k}
	\ \prod_{i=1}^k X^{\times n_i}.
	\]
	An element of $T(TX)$ is thus a finite list of finite lists:
	\[
	\bigl([x_{1,1},\dots,x_{1,n_1}],\;[x_{2,1},\dots,x_{2,n_2}],\;\dots,\;[x_{k,1},\dots,x_{k,n_k}]\bigr).
	\]
	Here $k$ can be $0$, in which case we have the empty list; some $n_i$ may also be $0$. The multiplication $\mu_X:T(TX)\to TX$ sends a list of lists to their concatenation:
	\[
	\mu_X\Bigl(\,[x_{1,1},\dots,x_{1,n_1}],\dots,[x_{k,1},\dots,x_{k,n_k}]\,\Bigr)
	\ :=\
	[x_{1,1},\dots,x_{1,n_1},\;x_{2,1},\dots,x_{2,n_2},\;\dots,\;x_{k,1},\dots,x_{k,n_k}]
	\ \in\ X^{\times (n_1+\cdots+n_k)}\subseteq TX.
	\]
	Namely, on the summand
	\(
	\prod_{i=1}^k X^{\times n_i}
	\)
	we define
	\[
	\mu_X\big|_{\prod_{i=1}^k X^{\times n_i}}:
	\ \prod_{i=1}^k X^{\times n_i}\ \longrightarrow\ X^{\times (n_1+\cdots+n_k)}
	\]
	to be the canonical isomorphism
	that reindexes the tuple
	\(
	((x_{1,1},\dots,x_{1,n_1}),\dots,(x_{k,1},\dots,x_{k,n_k}))
	\)
	as a single $(n_1+\cdots+n_k)$-tuple.
	If $k=0$, then $\mu_X$ maps it to the empty list
	\([\,]\in X^{\times 0}\subseteq TX\).
	Finally, it is routine to check that $(T,\eta,\mu)$ is actually a cartesian monad.
	

	\smallskip 
	
	\smallskip
	
	Let $(T,\eta,\mu)$ be a monad on a category $\mathcal C$.
	An \textit{algebra over the monad $T$} (or simply a \textit{$T$-algebra}) is a pair $(A, \alpha)$ with an object $A\in\mathcal C$ and a structure morphism
	$
	\alpha:\ T A \longrightarrow A
	$
	such that the \emph{unit} and \emph{associativity} axioms hold: (see \cite[B.4.2]{loday2012algebraic} and \cite[p7]{leinster2004higher})
	\begin{equation}
		\label{algebra_for_monad_alpha_condition_eq}
		\xymatrix@C=32pt{
			A \ar[r]^{\eta_A} \ar[dr]_{\mathrm{id}_A} & TA \ar[d]^\alpha \\
			& A & 
		}
		\qquad
		\xymatrix@C=44pt{
			T^2A \ar[r]^{\mu_A} \ar[d]_{T\alpha} & TA \ar[d]^\alpha \\
			TA \ar[r]_\alpha & A
		}
	\end{equation}
	A \emph{morphism of $T$-algebras} $f:(A,a)\to(B,b)$ is a map $f:A\to B$ in $\mathcal C$ with $f\circ a= b\circ Tf$.

	When $T$ is the free monoid monad (\ref{list_monad_eq}), the morphism $\alpha: TA\to A$ is decomposed to $\alpha_n :A^{\times n} \to A$ for $n\ge 0$.
	Since $A^{\times 0}$ is the terminal object by our convention, $\alpha_0$ specifies an element of $A$, denoted by $e$. Define $x_1\ast x_2= \alpha_2( x_1, x_2)$, and $(A, \ast, e)$ is a monoid (a semigroup with identity).

	\subsection{Bicategory of spans}
	Recall that a \emph{bicategory} $\mathcal B$ consists of the following data (see \cite[Definition 1.5.1]{leinster2004higher}):
	\begin{itemize}
		\itemsep 2pt
		\item A class whose elements are called \emph{objects} or \textit{0-cells}.
		\item For each pair of objects $A,B$, a category $\mathcal B(A,B)$
		whose objects $f:A\to B$ are called \emph{1-cells}
		and whose morphisms are called \emph{2-cells} $\alpha:f\Rightarrow g$.
		\item For each triple of objects $A,B,C$, a functor
		\[
		\circ:\ \mathcal B(B,C)\times \mathcal B(A,B)\;\longrightarrow\;\mathcal B(A,C),
		\]
		called \emph{composition}.
		\item For each object $A$, a distinguished 1-cell
		$\mathrm{id}_A:A\to A$ called the \emph{identity}.
		\item Natural isomorphisms
		\[
		a_{f,g,h}:(h\circ g)\circ f \;\cong\; h\circ(g\circ f),\qquad
		\lambda_f:\mathrm{id}_B\circ f \;\cong\; f,\qquad
		\rho_f:f\circ \mathrm{id}_A \;\cong\; f
		\]
		for composable 1-cells $f,g,h$.
	\end{itemize}
	These data satisfy the usual coherence conditions:
	the pentagon identity for $a$ and the triangle identities for $\lambda,\rho$. In reality, a bicategory with only one object (0-cell) is exactly a monoidal category.

	The following construction can be found in \cite[Definition 4.2.1]{leinster2004higher} or \cite[Definition 4.2]{hermida2000representable}.
	
	\begin{construction}
		\label{construction_span}
		Let $\mathcal E$ be a cartesian category and $(T,\eta,\mu)$ a cartesian monad on $\mathcal E$.
		We introduce the bicategory $\mathcal E_{(T)}$ of $T$-spans as follows.
		\begin{itemize}
			\itemsep 2pt
			\item 0-cells are those of $\mathcal E$.
			
			\item 1-cells $E\to E'$ are diagrams in $\mathcal E$
			\[
			\xymatrix{
				& M \ar[dr]^{c} \ar[dl]_{d}& \\
				TE & & E'
			}
			\]
			where $M$ is an object in $\mathcal E$ with a \emph{domain} map $d:M\to T E$
			and a \emph{codomain} map $c:M\to E'$.
			The corresponding identity 1-cell is $T E \xleftarrow{\eta_E} E \xrightarrow{\id_E} E$.
			\item 2-cells
			\[
			\xymatrix{
				& M \ar[dr]^{c} \ar[dl]_{d}& \\
				TE & & E'
			}
			\;\Rightarrow\;
			\xymatrix{
				& M \ar[dr]^{c'} \ar[dl]_{d'}& \\
				TE & & E'
			}
			\]
			are morphisms $\alpha:M\to M'$ in $\mathcal E$ such that $d=d'\alpha$ and $c=c'\alpha$.
			
			\item The composition $\circ$ of 1-cells
			\[
			\xymatrix{
				& M \ar[dl]_{d} \ar[dr]^{c} & \\ T E & & E'
			}
			\qquad
			\xymatrix{
				& M' \ar[dl]_{d'} \ar[dr]^{c'} & \\ T E' & & E''
			}
			\]
			is given by the diagram
			\[
			\xymatrix{ & & & M'\circ M \ar[dl]_{\pi_{TM}} \ar[dr]^{\pi_{M'}} \\
				& & TM \ar[dl]_{T(d)}\ar[dr]^{T(c)} & & M' \ar[dl]_{d'} \ar[dr]^{c'} \\
				& T^2E \ar[dl]_{\mu_E} & & TE' & & E'' \\ 
				TE}
			\]
			where 
			\begin{equation}
				\label{circ_product_Leinster_eq}
				M'\circ M=TM\times_{TE'} M'
			\end{equation}
			The compositions of 2-cells are defined in a similar way and omitted. The associator and unitors are the canonical isomorphisms induced by the universal property of pullbacks together with the monad axioms for $(T,\eta,\mu)$. The cartesianness ensures the needed pullbacks are
			preserved by $T$. 
		\end{itemize}
	\end{construction}

	Let $\mathcal E$ be any cartesian category and take the identity monad $T=\mathrm{Id}_{\mathcal E}$.
	Then, a 1-cell $E\to E'$ is just a diagram
	$
	E \;\xleftarrow{\ d\ }\; M \;\xrightarrow{\ c\ }\; E'
	$.
	The composition of 1-cells is the usual pullback composition, and the identity
	at $E$ is $E \xleftarrow{\ \mathrm{id}_E\ } E \xrightarrow{\ \mathrm{id}_E\ } E$.

	When $T$ is the free monoid monad in (\ref{list_monad_eq}), a 1-cell $E\to E'$ is a space $M$ equipped with ``multi-input to single-output'' maps to $E$ and $E'$ respectively:
	an element $m\in M$ determines a finite list $d(m)=[e_1,\dots,e_n]\in T E$
	of inputs and an output $c(m)\in E'$.
	Given $m\in M$ and $m' \in M'$ as above, the composite is defined exactly when
	$d'(n)=[\,c(m)\,]$, i.e.\ the $T E'$-input of $n$ is the singleton list with
	entry the output of $m$. The composition $M'\circ M$ collects such composable pairs,
	and the left leg of the composite reports the original $E$-inputs $[e_1,\dots,e_n]$,
	while the right leg reports the final output in $E''$.

	Let $\mathcal B$ be a bicategory.  
	A \emph{monad} on $X\in \mathcal B$ consists of a 1-cell $t : X \to X$ and 2-cells (called multiplication and unit)
	$m : t \circ t \;\Rightarrow\; t$, $u : 1_X \;\Rightarrow\; t$
	such that the following coherence conditions hold:
	\begin{enumerate}
		\item  the two composites
		\[
		(t \circ t \circ t) \;\xRightarrow{\,m \circ \mathrm{id}_t\,}\; (t \circ t) 
		\;\xRightarrow{\,m\,}\; t
		\qquad\text{and}\qquad
		(t \circ t \circ t) \;\xRightarrow{\,\mathrm{id}_t \circ m\,}\; (t \circ t) 
		\;\xRightarrow{\,m\,}\; t
		\]
		are equal as 2-cells.
		
		\item the two composites
		\[
		(t \circ 1_X) \;\xRightarrow{\,\mathrm{id}_t \circ u\,}\; (t \circ t) 
		\;\xRightarrow{\,m\,}\; t
		\qquad\text{and}\qquad
		(1_X \circ t) \;\xRightarrow{\,u \circ \mathrm{id}_t\,}\; (t \circ t) 
		\;\xRightarrow{\,m\,}\; t
		\]
		are both equal to the identity 2-cell $\mathrm{id}_t : t \Rightarrow t$.
	\end{enumerate}

	\subsection{Generalized multicategories}
	\label{s_T_multicategory}
	
	Let $\mathcal E$ be a cartesian category and $T=(T,\eta,\mu)$ a cartesian monad on $\mathcal E$.
	%
	\begin{defn}
		\label{multicategory_defn}
		A \emph{$T$-multicategory} (or a \textit{$T$-multicategory on $L$} to stress the object) is defined as a monad in the bicategory $\mathcal E_{(T)}$ of $T$-spans. Specifically, it is a tuple $C=(L,\mathcal M;d,c; \iota,\gamma)$ consisting of 
		\begin{itemize}
			\itemsep 2pt
			\item a 1-cell in $\mathcal E_{(T)}$, that is, a diagram
			\[
			\xymatrix{
				& \mathcal M \ar[dr]^{c} \ar[dl]_{d}& \\
				TL & & L
			}
			\]
			We think of elements of $\mathcal M$ as \emph{arrows}, with a \emph{domain map} $d$ (a ``$T$-list'' of inputs in $L$) and a \emph{codomain map} $c$ (a single output in $L$).
			\item $\gamma:\ \mathcal M\circ \mathcal M = \mathcal M \times_{T L} T \mathcal M \longrightarrow \mathcal M	$
			is a map (called multiplication) such that 
			\begin{equation}
				\label{T-multicategory_condition_2_eq}
				d\circ \gamma \;=\; \mu_{L}\circ T(d)\circ \pi_{T\mathcal M}
				,\qquad \text{and} \qquad c\circ \gamma \;=\; c\circ \pi_{\mathcal M}
			\end{equation}
			\item $\iota:L\to \mathcal M$ is a map (called unit) such that 
			\begin{equation}
				\label{T-multicategory_condition_1_eq}
				d\circ \iota=\eta_{L}, \qquad \text{and} \qquad c\circ \iota=\mathrm{id}_{L}
			\end{equation}
		\end{itemize}
		where $\pi_{T\mathcal M}: \mathcal M\circ\mathcal M\to T\mathcal M$ and $\pi_{\mathcal M}: \mathcal M\circ \mathcal M \to \mathcal M$ are the natural projection maps from the pullback and $\eta_L,\mu_L$ come from the monad $T=(T,\eta,\mu)$.
		These data are required to satisfy the natural coherence conditions; cf. \cite[Definition 4.2.2 \& 6.2.2]{leinster2004higher}.
		A \textit{$T$-operad} is defined to be a $T$-multicategory on the terminal object $\mathbf 1$ of $\mathcal E$ \cite[Definition 4.2.3]{leinster2004higher}.
		We say a $T$-multicategory \textit{topological} if the underlying category $\mathcal E$ is the category of topological spaces.
	\end{defn}

	\begin{ex}[Recovery of category]
		\label{ex_recover_category}
		Note that if $T$ is the identity monad, then a $T$-multicategory is simply a category \cite[Example 4.2.6]{leinster2004higher}. For example, the multiplication is $\gamma: \mathcal M\times_L \mathcal M \to \mathcal M$ where the fiber product refers to the composable pairs of morphisms; the unit map $\iota$ sends an object in $L$ to the identity morphism in $\mathcal M$.
	\end{ex}
	
	\begin{prop}
		For the free monoid monad $T$ in (\ref{list_monad_eq}), the notion of a $T$-operad coincides with that of a non-symmetric topological operad, while the notion of a $T$-multicategory on $L$ coincides with that of a non-symmetric topological multicategory on $L$.
	\end{prop}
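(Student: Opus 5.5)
We unpack Definition~\ref{multicategory_defn} for $\mathcal E=\mathbf{Top}$ and $T$ the free monoid monad \eqref{list_monad_eq}, and compare the result with the classical data of a non-symmetric topological multicategory on $L$. The operad statement then follows by taking $L=\mathbf 1$: $TL=\coprod_n \{*\}\cong\mathbb N$, so the domain map only records arities and $c$ carries no information.

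\emph{Step 1: the underlying span.} Since $TL=\coprod_{n\ge0}L^{\times n}$ is a coproduct in $\mathbf{Top}$, the map $d:\mathcal M\to TL$ splits $\mathcal M$ as $\coprod_n\mathcal M_n$ with $\mathcal M_n=d^{-1}(L^{\times n})$, a clopen subspace. Together with $c$ this gives, for each $n$, a continuous map
\[
(d,c):\mathcal M_n\to L^{\times n}\times L .
\]
The fibre over $(x_1,\dots,x_n;y)$ is the hom-space $\mathcal M(x_1,\dots,x_n;y)$. Conversely, such a family of spaces over $L^{\times n}\times L$ assembles into the span $TL\leftarrow\mathcal M\to L$. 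This identifies $T$-spans $L\to L$ with the underlying "topological multigraphs" of multicategories.

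\emph{Step 2: the composition.} Compute the pullback
\[
\mathcal M\circ\mathcal M=\mathcal M\times_{TL}T\mathcal M .
\]
An element of $T\mathcal M$ is a list $(m_1,\dots,m_n)$ with $m_i\in\mathcal M_{k_i}$, and $T(c)$ sends it to $(c(m_1),\dots,c(m_n))$. So the pullback is
\[
\coprod_{n,\,k_1,\dots,k_n}\ \mathcal M_n\times_{L^{\times n}}\prod_i\mathcal M_{k_i},
\]
the space of composable configurations $(m;m_1,\dots,m_n)$. Condition \eqref{T-multicategory_condition_2_eq} says exactly two things about $\gamma(m;m_1,\dots,m_n)$:
\begin{itemize}
\item its output is $c(m)$;
\item its input list is the concatenation $\mu_L$ of the input lists of the $m_i$.
\end{itemize}
Hence $\gamma$ is precisely a family of continuous full composition maps
\[
\mathcal M(y_1,\dots,y_n;z)\times\prod_i\mathcal M(\vec x_i;y_i)\to\mathcal M(\vec x_1,\dots,\vec x_n;z).
\]
Similarly, by \eqref{T-multicategory_condition_1_eq}, $\iota:L\to\mathcal M_1$ is a continuous choice of identity $1_x\in\mathcal M(x;x)$. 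I would also note that, because $\mathcal M_0$ may be nonempty and the lists $k_i$ may be $0$, nullary operations are included on both sides.

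\emph{Step 3: the axioms.} The monad coherence in $\mathcal E_{(T)}$ — associativity and the two unit laws — must be translated into the usual multicategory associativity and unit axioms. This is the main obstacle, because it requires identifying the associator of $\mathcal E_{(T)}$ concretely. I would compute:
\begin{itemize}
\item $\mathcal M\circ(\mathcal M\circ\mathcal M)$ and $(\mathcal M\circ\mathcal M)\circ\mathcal M$, both explicitly as spaces of three-level trees $(m;(m_i);(m_{ij}))$, using that $T$ preserves pullbacks;
\item the canonical isomorphism between them, which merely rebrackets lists via $\mu$.
\end{itemize}
With these identifications, the two composites $\gamma\circ(\gamma\circ 1)$ and $\gamma\circ(1\circ\gamma)$ become the two ways of composing a two-level tree, which is the classical associativity law. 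In the same way, $\eta$ identifies $\mathcal M\circ 1_L$ and $1_L\circ\mathcal M$ with $\mathcal M$, and the unit diagrams become $\gamma(1_z;m)=m=\gamma(m;1_{x_1},\dots,1_{x_n})$.

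Finally, morphisms of $T$-multicategories (maps of spans commuting with $\gamma$ and $\iota$) correspond to continuous multifunctors. All these translations are reversible, so the two notions coincide.
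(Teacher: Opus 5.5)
Your proposal is correct and takes essentially the same route as the paper, which proves this via its labeled generalization Proposition~\ref{prop_coloured_operad}. Both arguments split $\mathcal M$ by arity using $TL=\coprod_n L^{\times n}$, identify $\mathcal M\circ\mathcal M$ with composable configurations $(x;[y_1,\dots,y_k])$, and read off composition and unit from \eqref{T-multicategory_condition_2_eq} and \eqref{T-multicategory_condition_1_eq}; the only difference is presentational, in that the paper repackages $\gamma$ as partial compositions $\circ_i$ and simply asserts the axioms, whereas you keep the full composition and sketch the associator/unitor comparison more explicitly.
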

	
	\begin{proof}
		See Proposition \ref{prop_coloured_operad} below for a slightly more general situation.
	\end{proof}

	Let $T$ be a cartesian monad as before. Suppose $C=(\mathcal L,\mathcal M; d,c; \iota,\gamma)$ and $C'=(\mathcal L',\mathcal M'; d',c'; \iota',\gamma')$ are $T$-multicategories.
	A \textit{map $f: C\to  C'$ of $T$-multicategories} is a map $f=(f_0,f_1)$ of the underlying graphs with the following commutative diagrams
	\[
	\xymatrix{
		\mathcal L \ar[rr]^{\iota}\ar[d]^{f_0} & & \mathcal M \ar[d]^{f_1} & & \mathcal M\circ \mathcal M \ar[d]^{f_1\ast f_1} \ar[rr]^\mu  & & \mathcal M \ar[d]^{f_1}  \\
		\mathcal L' \ar[rr]^{\iota'} & & \mathcal M' & & \mathcal M'\circ \mathcal M'  \ar[rr]^{\mu'} & & \mathcal M'
	}
	\]
	where $f_1\ast f_1$ is the natural map induced by two copies of $f_1$ through $T$.

	\subsection{Labeling in operads and multicategories}
	\label{s_additional_grading}
	
	In this subsection, we return to the classical notions of operads and multicategories.
	In an operad, one thinks of $n$-ary operations as forming an object
	$\mathcal O(n)$ in a monoidal category $\mathcal C$. Before imposing any composition laws, the raw data is
	simply a family
	\[
	\{ \mathcal O(n)\}_{n\ge 0}
	\]
	of objects of $\mathcal C$ indexed by arity $n\in\mathbb N$.  
	Such families provide the basic environment in which the notion of operads 
	can be defined (cf.~\cite[Definition~1.98]{markl2002operads}).
	In many situations it is useful to enrich this picture by an additional grading.
	For instance, operations may carry a ``weight,'' ``degree,'' or ``energy''
	that is additive under substitution. Formally, one fixes a monoid
	$(S, + , \theta )$ and considers families
	\[
	\{\mathcal O(n,\beta)\}_{n\ge 0,\ \beta \in S}
	\]
	indexed both by arity $n$ and by a grading $\beta\in S$, and we may think of $\mathcal  O(n)=\coprod_\beta \mathcal  O(n,\beta)$.
	These are like $S$-labeled families of objects in $\mathcal C$.
	We aim to develop this idea for both operads and multicategories.
	One major purpose of introducing this extra grading is to include the structure of \textit{curved} $A_\infty$ algebras \cite{FOOOBookOne,Yuan_I_FamilyFloer} which are crucial in various applications \cite{Yuan_unobs,Yuan_e.g._FamilyFloer,Yuan_c_1,FOOOToricOne,FOOOToricTwo,FOOO_bookblue}.
	Informally, we replace the multilinear maps $\m_k$ with a family ${\m_{k,\beta}}$ carrying an extra monoid grading and obeying the corresponding ``$\beta$-graded'' $A_\infty$ relations; see Section \ref{s_A_inf_algebra} for more details.

	Fix a commutative monoid $(S,+,\theta)$ where $\theta$ is the identity element. 
	In symplectic geometry, we often choose $S \subseteq H_2(X,L)$ or $\pi_2(X,L)$, consisting of classes with non-negative symplectic areas, where $L$ is a Lagrangian submanifold in a symplectic manifold $X$; see Section \ref{s_moduli_disk}.
	
	\begin{convention}
		While the term ``$S$-graded'' may be more natural, the term ``graded'' is often used in other contexts; to avoid ambiguity, let's adopt the term ``labeled’’ below.
	\end{convention}
	
	\begin{defn}
		\label{S_labeled_multicategory_defn}
		A multicategory $(L,\mathcal M; d,c,\iota,\gamma)$ on $L$ is a $T$-multicategory for the free monoid monad $T$. It is called \emph{$S$-labeled} if we are given a \emph{labeling map}
		\[
		|\cdot|:\ \mathcal M \longrightarrow  S
		\]
		with $|\iota(\ell)|=\theta$ for all $\ell\in L$, and satisfying the \emph{label additivity}
		\[
		\bigl|\,\gamma\bigl(x;\,[y_1,\dots,y_k]\bigr)\,\bigr|\ =\ |x|+\sum_{i=1}^k|y_i| \qquad \text{or equivalently} \qquad |x\circ_i y| =|x|+|y|
		\]
		Concretely, an \emph{$S$-labeled multicategory on $L$} is defined to be the following data:
		\begin{enumerate}
			\itemsep 2pt
			\item For each $n\ge 0$ and $\beta\in S$, a space $\mathcal M(n,\beta)$ of $n$-ary operations of label $\beta$.
			
			\item Structure maps (input and output colours)
			\[
			\ev_0:\mathcal M(n,\beta)\to L,
			\qquad
			\ev_i:\mathcal M(n,\beta)\to L \quad (1\le i\le n),
			\]
			so that each operation has $n$ ordered input colours and one output colour.
			
			\item A unit map
			\[
			\iota:L \to \mathcal M(1,\theta),
			\]
			assigning to each colour $\ell\in L$ a distinguished unary operation of label $\theta$.
			
			\item Composition maps:
			for all $k\ge 0$, $n_1,\dots,n_k\ge 0$, and $\beta_0,\beta_1,\dots,\beta_k\in S$,
			\[
			\gamma:\ 
			\mathcal M(k,\beta_0)\ \times_{L^{\times k}}\ \prod_{j=1}^k \mathcal M(n_j,\beta_j)
			\;\longrightarrow\; 
			\mathcal M\!\Bigl(\sum_{j=1}^k n_j,\ \beta_0+\sum_{j=1}^k \beta_j\Bigr).
			\]
			Here the fiber product condition requires $\ev_j(x)=\ev_0(y_j)$ for $x\in\mathcal O(k,\beta_0)$ and $y_j\in\mathcal O(n_j,\beta_j)$.
			Equivalently, we have the partial composition maps
			\[
			\circ_i : \mathcal M(k,\beta_0) \times_{L, (\ev_i, \ev_0)} \mathcal M(n,\beta) \to \mathcal M(k+n-1,\beta_0+\beta)
			\]
			
			\item These data satisfy the obvious associativity and unit axioms.
		\end{enumerate}
		Moreover, we say $\mathcal M$ is \emph{topological} if $L$ and $\mathcal M(n,\beta)$'s are topological spaces and $\ev_0,\ev_i,\iota,\gamma$ are continuous maps.
	\end{defn}

	\begin{prop}
		\label{prop_coloured_operad}
		Let $T$ be the free monoid monad $TX=\coprod_{n\ge 0}X^{\times n}$ on $\mathbf{Top}$ (\ref{list_monad_eq}), and let $(S,+,\theta)$ be a commutative monoid.
		Then:
		\begin{enumerate}
			\itemsep 2pt
			\item The notion of an $S$-labeled $T$-operad coincides with that of an $S$-labeled non-symmetric topological operad.
			\item The notion of an $S$-labeled $T$-multicategory on $L$ coincides with that of an $S$-labeled topological multicategory on $L$
		\end{enumerate}
	\end{prop}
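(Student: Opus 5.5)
The plan is to unpack Definition \ref{multicategory_defn} for the free monoid monad $T$ of (\ref{list_monad_eq}) and match each piece of data and each axiom with the data (1)--(5) of Definition \ref{S_labeled_multicategory_defn}. Since (1) is the special case $L=\mathbf 1$ of (2) (then $TL=\coprod_n \{*\}\cong\mathbb N$ and all fiber products over powers of $L$ become ordinary products), it suffices to prove (2). First I will fix what ``$S$-labeled $T$-multicategory'' means: a $T$-multicategory $(L,\mathcal M;d,c;\iota,\gamma)$ together with a continuous map $|\cdot|:\mathcal M\to S$ (with $S$ discrete), such that $|\iota(\ell)|=\theta$ and $|\gamma(x;[y_1,\dots,y_k])|=|x|+\sum|y_i|$, as in the first part of Definition \ref{S_labeled_multicategory_defn}.

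\emph{Step 1 (underlying spans).} Given $d:\mathcal M\to TL=\coprod_n L^{\times n}$, set $\mathcal M(n):=d^{-1}(L^{\times n})$; this is open and closed, so $\mathcal M=\coprod_n\mathcal M(n)$ topologically. The components of $d|_{\mathcal M(n)}$ give $\ev_1,\dots,\ev_n$, and $c$ gives $\ev_0$. Refining further by the labeling, $\mathcal M(n,\beta):=\mathcal M(n)\cap|\cdot|^{-1}(\beta)$ (again clopen). Conversely, a family $\{\mathcal M(n,\beta)\}$ with evaluation maps assembles into a span $TL\leftarrow\coprod_{n,\beta}\mathcal M(n,\beta)\to L$ with the obvious labeling. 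These constructions are mutually inverse.

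\emph{Step 2 (composition and unit).} Compute $\mathcal M\circ\mathcal M=\mathcal M\times_{TL}T\mathcal M$. An element of $T\mathcal M$ is a list $[y_1,\dots,y_k]$ with $y_j\in\mathcal M(n_j,\beta_j)$, and $T(c)$ sends it to $[\ev_0 y_1,\dots,\ev_0 y_k]\in L^{\times k}$. Since coproducts in $\mathbf{Top}$ commute with pullbacks, one gets the decomposition
\[
\mathcal M\circ\mathcal M\cong\coprod_{k,\,n_\bullet,\,\beta_\bullet}\mathcal M(k,\beta_0)\times_{L^{\times k}}\prod_{j=1}^k\mathcal M(n_j,\beta_j).
\]
Condition (\ref{T-multicategory_condition_2_eq}) says exactly that $\gamma$ sends such a summand into arity $\sum n_j$ with concatenated inputs and output $\ev_0(x)$; label additivity puts it in label $\beta_0+\sum\beta_j$. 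Likewise (\ref{T-multicategory_condition_1_eq}) together with $|\iota|=\theta$ says that $\iota$ lands in $\mathcal M(1,\theta)$ with $\ev_0\iota=\ev_1\iota=\mathrm{id}$. Continuity passes back and forth because the summands are clopen.

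\emph{Step 3 (axioms).} The monad axioms in $\mathcal E_{(T)}$ (associativity of $m$, two unit laws), once the associator and unitors of Construction \ref{construction_span} are written out via the canonical reindexing isomorphisms $\mu$, become on each summand the usual associativity of iterated $\gamma$ and the unit laws $\gamma(\iota(\ev_0 x);[x])=x=\gamma(x;[\iota(\ev_1x),\dots,\iota(\ev_kx)])$. This is the main obstacle. The difficulty is bookkeeping: one has to identify $(\mathcal M\circ\mathcal M)\circ\mathcal M$ and $\mathcal M\circ(\mathcal M\circ\mathcal M)$ with the space of two-level trees of operations and check that the associator is the identity on that description. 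I would do this by computing both triple composites explicitly as coproducts of iterated fiber products, indexed by $k$, $(n_j)$ and $(m_{j,i})$. Then $\mu_L$ flattens the indices and the equalities become the standard nested-composition identities. The translation is purely formal and follows Leinster's treatment \cite[Example 4.2.7]{leinster2004higher}, which I would cite for the unlabeled case, so only compatibility with labels remains. That compatibility is automatic because labels are additive and $\theta$ is neutral.
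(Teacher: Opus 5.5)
Your proposal is correct and follows essentially the same route as the paper. Both reduce (1) to (2) via $L=\mathbf 1$, cut $\mathcal M$ into $\mathcal M(n,\beta)$ by pulling $d$ back along $L^{\times n}\hookrightarrow TL$ and then by labels, identify the summands $\mathcal M(k,\beta_0)\times_{L^{\times k}}\prod_j\mathcal M(n_j,\beta_j)$ of $\mathcal M\circ\mathcal M$, use the source law and label additivity (resp.\ $d\circ\iota=\eta_L$ and $|\iota|=\theta$) to locate $\gamma$ (resp.\ $\iota$), and defer the axioms to Leinster's Example 4.2.7. You are somewhat more careful than the paper about topology (clopen summands, which implicitly needs the labeling map to be continuous for discrete $S$) and about the inverse construction, while the paper additionally extracts the partial compositions $\circ_i$ by inserting units.
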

	
	\begin{proof} 
		For the definition of (colored) operads, we refer to standard sources such as \cite{markl2002operads}. The proof proceeds in essentially the same way as in the unlabeled case (e.g. \cite[Example 4.2.7]{leinster2004higher}), and we include the argument for the labeled case here for completeness.
		Without loss of generality, we only address the second statement as the first is the special case $L$ being the terminal object.
		Suppose $(L,\mathcal M;d,c, \iota,\gamma)$ is an $S$-labeled $T$-multicategory in the sense of Definition \ref{S_labeled_multicategory_defn} with the labeling map $|\cdot|$. For $TL \cong \coprod_{n\ge 0} L^{\times n}$, we define the arity-$n$ component $\mathcal M(n)$ of $\mathcal M$ by performing the pullback
		\[
		\xymatrix{
			\mathcal M(n) \ar[r]^-{d_n} \ar[d]_{j_n}  &
			L^{\times n} \ar[d]^{i_n} \\
			\mathcal M \ar[r]_-{d} & TL .
		}
		\]
		where $d_n:\mathcal M(n)\to L^{\times n}$ records the $n$ input objects, and $j_n:\mathcal M(n)\to \mathcal M$ is the inclusion.
		Decompose further by labels:
		\[
		\mathcal M(n)\;=\;\coprod_{\beta\in S}\mathcal M(n,\beta), 
		\quad\text{where}\quad \mathcal M(n,\beta)=\{x\in\mathcal M(n):|x|=\beta\}= |\hspace{-0.25em} \cdot \hspace{-0.25em}|^{-1}(\beta)\cap \mathcal M(n).
		\]
		Thus, 
		\[
		\mathcal M \;=\; \coprod_{n\ge 0}\;\coprod_{\beta\in S}\mathcal M(n,\beta).
		\]
		Applying the universal property of the pullback to the natural inclusion $\mathcal M(n,\beta)\to\mathcal M(n)$, we also have the diagram
		\[
		\xymatrix{
			\mathcal M(n,\beta) \ar[r]^-{d_{n,\beta}} \ar[d]_{j_{n,\beta}}  &
			L^{\times n} \ar[d]^{i_{n}} \\
			\mathcal M \ar[r]_-{d} & TL .
		}
		\]
		for each $\beta\in S$, where $d_{n,\beta}$ and $j_{n,\beta}$ are naturally induced from $d_n$ and $j_n$ respectively. Write
		\begin{equation}
			\label{ev_abstract_eq}
			\ev_0^{(n,\beta)} = c\circ j_{n,\beta}: \mathcal M(n,\beta)\to L, \qquad
			\ev_i^{(n,\beta)} = \pr_i\circ d_{n,\beta}: \mathcal M(n,\beta)\to L\ \ (1\le i\le n)
		\end{equation}
		and call them the evaluation maps.
		In other words, each element $\mathbf u\in\mathcal M(n,\beta)$ has one \emph{output}
		$\ev_0(\mathbf u)$ and $n$ ordered \emph{inputs} $(\ev_1(\mathbf u),\dots,\ev_n(\mathbf u))$.
		For the free monoid monad,
		$\eta_L:L\to TL$ is precisely the inclusion of \emph{singleton lists}, i.e.
		$
		\eta_L \;=\; i_1 \circ \mathrm{id}_L \;:\; L \xrightarrow{\cong} L^{\times 1} \xrightarrow{\ i_1 \ } TL .
		$.
		By the condition $d\circ\iota=\eta_L$ (\ref{T-multicategory_condition_1_eq}),
		\[
		\xymatrix{
			L \ar[r]^-{\ \mathrm{id}_L\ } \ar[d]_{\ \ \iota} &
			L^{\times 1} \ar[d]^{\ \ i_1 } \\
			\mathcal M \ar[r]_-{d} & TL
		}
		\]
		commutes. By the universal property of the pullback, there is a unique map
		$\overline\iota:L\to \mathcal M(1)$ with
		$j_1\circ \overline\iota=\iota$ and $d_1\circ \overline\iota=\mathrm{id}_L$.
		One can show that $\overline\iota$ factors as
		$L\xrightarrow{}\mathcal M(1,\theta)\xrightarrow{}\mathcal M(1)$ because of the requirement $|\iota|=\theta$.
		Abusing the notation, we denote the map $L\to\mathcal M(1,\theta)$ by $\overline \iota$ as well.
		
		The pullback
		\[
		\xymatrix{
			\mathcal M\circ \mathcal M \ar[r]^-{\ \ \pi_{T\mathcal M}} \ar[d]_{\pi_{\mathcal M}}  &
			T\mathcal M \ar[d]^{\,T c} \\
			\mathcal M \ar[r]_-{d} & TL
		}
		\]
		for the free monoid monad $T$ can be identified with the space of pairs
		$
		\bigl(x,\ [y_1,\dots,y_k]\bigr)$ where $x\in\mathcal M$ and $[y_1,\dots,y_k]\in T\mathcal M\equiv \coprod_{k\ge 0} \mathcal M^{\times k}$
		subject to the matching condition
		\[
		d(x)\;=\;T(c)\bigl([y_1,\dots,y_k]\bigr)\;=\;[\,c(y_1),\dots,c(y_k)\,]\ \in\ TL.
		\]
		Under this identification, we have
		$
		\pi_{\mathcal M}\bigl(x,[y_1,\dots,y_k]\bigr)=x$ and $
		\pi_{T\mathcal M}\bigl(x,[y_1,\dots,y_k]\bigr)=[y_1,\dots,y_k]$.
		Observe that the following diagram
		\[
		\xymatrix@C=2.4em@R=2.0em{
			\mathcal M(k,\beta_0)\ \times_{L^{\times k}}\ \displaystyle\prod_{j=1}^k \mathcal M(n_j,\beta_j)
			\ar[r] \ar[d]
			&
			\displaystyle\prod_{j=1}^k \mathcal M(n_j,\beta_j)
			\ar[d]^{(\ev_0,\dots,\ev_0)}
			\ar[r]
			&
			T\mathcal M
			\ar[dd]^{\ T c}
			\\
			\mathcal M(k,\beta_0)
			\ar[r]_-{(\ev_1,\dots,\ev_k)}
			\ar[d]
			&
			L^{\times k}
			\ar[rd]_-{\ i_k \ }
			&
			\\
			\mathcal M
			\ar[rr]_-{d}
			&
			& 
			TL
		}
		\]
		is a summand of
		\[
		\xymatrix{ \mathcal M \times_{TL} T\mathcal M \ar[rr] \ar[d] & & T\mathcal M \ar[d]^{Tc} \\
			\mathcal M \ar[rr]^d & & TL}
		\]
		Thus, the composition $\gamma$ induces
		\begin{equation}
			\label{gamma_n1nk_eq}
			\gamma^{\beta_0;\beta_1,\dots,\beta_k}_{n_1,\dots, n_k}: \ \mathcal M(k,\beta_0)\ \times_{L^{\times k}}\ \displaystyle\prod_{j=1}^k \mathcal M(n_j,\beta_j ) \to  \mathcal M \times_{TL} T\mathcal M \to \mathcal M
		\end{equation}
		We further claim that it has image contained in $\mathcal M(n_1+\cdots+n_k,\beta_0+\beta_1+\cdots+\beta_k)$.
		To see this, by definition, an input point is a tuple
		$(x;[y_1,\dots,y_k])$ with $x\in\mathcal M(k,\beta_0)$, $y_j\in\mathcal M(n_j,\beta_j)$ such that the
		matching conditions $\ev_j(x)=\ev_0(y_j)$ hold.
		By the source law at (\ref{T-multicategory_condition_2_eq}),
		\[
		d\!\left(\gamma^{\beta_0;\beta_1,\dots,\beta_k}_{n_1,\dots,n_k}(x; [y_1,\dots,y_k])\right) = \mu_L ([ d(y_1),\dots, d(y_k)])
		\]
		Since each $d(y_j)$ lives in $L^{\times n_j}$ and the monad multiplication $\mu_L$ concatenates the lists $d(y_1),\dots, d(y_k)$, it follows that the left hand side lives in $L^{\times( n_1+\cdots +n_k)}$.
		By the pullback definition $
		\mathcal M(n)=\mathcal M\times_{TL} L^{\times n}$
		along the inclusion $L^{\times n}\hookrightarrow TL$, this exactly says
		the composite lies in the subobject $\mathcal M(n_1+\cdots+n_k)$. It further lies in $\mathcal M(n_1+\cdots+n_k, \beta_0+\beta_1+\cdots+\beta_k)$ due to the label additivity condition:
		\[
		|\gamma(x; [y_1,\dots, y_k]) = |x|+\sum_j  |y_j| =\beta_0+\cdots+\beta_j
		\]
		
		Equivalently, given $x\in\mathcal M(k,\beta_0)$ and $y\in\mathcal M(n,\beta)$ with $\ev_i(x)=\ev_0(y)$ for a fixed $i$, we can define the partial compositions
		\begin{equation}
			\label{partial_composition_abstract_eq}
			\circ_i : \mathcal M(k,\beta_0) \times_{L, (\ev_i, \ev_0)} \mathcal M(n,\beta) \to \mathcal M(k+n-1,\beta_0+\beta)
		\end{equation}
		via
		\[
		x\circ_i y \;:=\;
		\gamma_{\,1,\dots,1,\;n,\;1,\dots,1}^{\beta_0;\,\theta,\dots,\theta,\;\beta,\;\theta,\dots,\theta}
		\bigl(x;\ y_1,\dots,y_{i-1},y,y_{i+1},\dots,y_k\bigr)
		\;\in\; \mathcal M(k+n-1,\ \beta_0+\beta).
		\]
		where $y_j:=\overline\iota(\ev_j(x))\in \mathcal M(1,\theta)$ for $j\ne i$.
		Associativity and unit axioms for $\gamma$ ensure that the partial compositions $\circ_i$ satisfy the standard operad axioms (see \cite[Section 5.9.4]{loday2012algebraic}): for any $\lambda\in\mathcal M(\ell,\beta_1)$, $\mu\in \mathcal M(m,\beta_2)$, and $\nu\in \mathcal M(n,\beta_3)$, we have
		\begin{equation}
			\label{operad_axiom_standard}
			\begin{aligned}
				(\lambda \circ_i \mu)\ \circ_{\,i-1+j}\nu \; &=\; 
				\lambda \circ_i (\mu \circ_j \nu), 
				\qquad 1 \leq i \leq l,\; 1 \leq j \leq m, \\
				(\lambda \circ_i \mu)\ \circ_{\,k-1+m}\nu \; &=\; 
				(\lambda \circ_k \nu) \circ_i \mu, 
				\qquad 1 \leq i < k \leq l.
			\end{aligned}
		\end{equation}
	\end{proof}

	Note that the partial compositions $\circ_i$ in 
	(\ref{partial_composition_abstract_eq}), together with the properties 
	(\ref{operad_axiom_standard}), differ from those of a usual 
	non-symmetric operad in a monoidal category $(\mathcal C,\odot)$. 
	Indeed, in our setting the fiber product (pullback) in 
	(\ref{partial_composition_abstract_eq}) depends on $i$, so the domain 
	of the partial composition varies with $i$. This does not agree with the 
	standard definition in the literature 
	(see \cite[Definition~1.16]{markl2002operads}, 
	\cite[Section~5.9.4]{loday2012algebraic}).
	It is actually a multicategory or a colored operad.
	
	In the last, we provide some natural examples of $S$-labeled operads.

	\begin{ex}[\textbf{$S$-weighted rooted planar trees}]
		\label{ex_rooted_trees}
		A \textit{planar tree} is a tree $\Gamma$ with an embedding $\Gamma \xhookrightarrow{} \mathbb D^2\subset \mathbb C$ such that a vertex $v$ has only one edge if and only if $v$ lies in the unit circle $\partial \mathbb D^2$. Such a vertex is called an exterior vertex, and each other vertex is called an interior vertex. The set of exterior (resp. interior) vertices is denoted by $\Cext_0(\Gamma)$ (resp. $\Cint_0(\Gamma)$). Then, $C_0(\Gamma)=\Cext_0(\Gamma)\cup \Cint_0(\Gamma)$ is the set of all vertices.
		Also, an edge of $\Gamma$ is called exterior if it contains an exterior vertex and is called interior otherwise. The set of all exterior edges is denoted by $\Cext_1(\Gamma)$ and that of all interior edges is denoted by $\Cint_1(\Gamma)$.
		An exterior edge is called the \textit{outgoing edge} if it contains the root and is called an \textit{incoming edge} or a \textit{leaf} if not.
		A \textit{planar rooted tree} is a planar tree $\Gamma$ with a specified exterior vertex $v_0$ therein. We call $v_0$ the \textit{root}; it produces a natural partial order on the set of vertices $C_0(\Gamma)$ by setting
		$
		v<v'
		$
		if $v\neq v'$ and there is a path in $\Gamma$ from $v$ to $v_0$ which passes through $v'$. Particularly, the root $v_0$ is the largest vertex with respect to this partial order.
		Besides, we order the leaves counterclockwise starting from the root.
		
		Fix a commutative monoid $(S,+,\theta)$. 
		For $n\ge 0$ and $\beta\in S$, let $\mathcal T_S(n,\beta)$ be the set of isomorphism classes of finite \emph{planar rooted trees} $\Gamma$ with $n$ outgoing edges, together with an $S$-weight 
		\[
		w:V_{\mathrm{int}}(\Gamma)\longrightarrow S
		\]
		assigned to each interior vertex, such that the \emph{total weight} is
		\[
		|\Gamma|:=\sum_{v\in V_{\mathrm{int}}(\Gamma)} w(v)\ =\ \beta.
		\]
		
		We claim that $\mathcal T_S=\{\mathcal T_S(n,\beta)\}_{n\ge 0, \beta\in\Gamma}$ is an $S$-labeled operad.
		Indeed, the composition maps are defined as follows.
		Given a tree $\Gamma_0\in \mathcal T_S(k,\beta_0)$ and trees $\Gamma_i\in \mathcal T_S(n_i,\beta_i)$ $(1\le i\le k)$, define
		\[
		\gamma (\Gamma_0;\,\Gamma_1,\dots,\Gamma_k)=\gamma_{k;\,n_1,\dots,n_k}^{\,\beta_0;\,\beta_1,\dots,\beta_k}(\Gamma_0;\ \Gamma_1,\dots,\Gamma_k)
		\]
		to be the planar rooted tree obtained by grafting the root of $\Gamma_i$ to the $i$-th leaf of $\Gamma_0$ (for all $i$) and reading leaves in the induced counterclockwise order. 
		The vertex weights are inherited from the pieces, so the resulting tree has the total weight
		\[
		\bigl|\gamma(\Gamma_0;\,\Gamma_1,\dots, \Gamma_k)\bigr|=|\Gamma_0|+\sum_{i=1}^k|\Gamma_i|=\beta_0+\beta_1+\cdots+\beta_k,
		\]
		Therefore, we have defined:
		\[
		\gamma_{k;\,n_1,\dots,n_k}^{\,\beta_0;\,\beta_1,\dots,\beta_k}:\ 
		\mathcal T_S(k,\beta_0)\times \prod_{i=1}^k \mathcal T_S(n_i,\beta_i)\longrightarrow 
		\mathcal T_S\!\Bigl(\textstyle\sum_i n_i,\ \beta_0+\sum_i\beta_i\Bigr).
		\]
		Equivalently, we can define the $i$-th partial composition
		\[
		\circ_i:\   \mathcal T_S(k,\beta_0)\times \mathcal T_S(n,\beta)\longrightarrow \mathcal T_S(k+n-1,\beta_0+\beta)
		\]
		by grafting the root of $\Gamma\in\mathcal T_S(n,\beta)$ onto the $i$-th leaf of $\Gamma_0\in \mathcal T_S(k,\beta_0)$. The vertex weights are inherited similarly, and the total weight is $\beta_0+\beta$.
	\end{ex}

	\begin{ex}[\textbf{$S$-labeled endomorphism operad}]
		\label{ex_endomorphism_operad}
		Fix a commutative monoid $(S,+,\theta)$.
		Fix a vector space $A$.
		We define the $S$-labeled endomorphism operad $\End_A^S=\{\End_A^S(n,\beta)\}$ as follows.
		Given $n\ge 0$ and $\beta\in S$, we set
		\[
		\End^S_A(n,\beta) = \Hom (A^{\otimes n} , A)
		\]
		where the right hand side does not depend on $\beta$, and $\beta$ is just an extra label. In other words,
		\[
		\End_A^S(n)=\coprod_{\beta\in S} \End_A^S(n,\beta)= \coprod_{\beta\in S} \{\beta\}\times \Hom(A^{\otimes n}, A)
		\]
		When $n=0$, our convention is that we identify $\Hom(A^{\otimes 0}, A)$ with $A$.
		The extra labels are useful in the studies of curved $A_\infty$ algebras in Lagrangian Floer theory; see e.g. \cite{FuCyclic,Yuan_unobs,Yuan_I_FamilyFloer}. We will also go back to this point later.

		The unit $\eta$ for $\End^S_A(1,\theta)$ is given by the identity map $\id_A$.
		Given $k\ge0$, $n_1,\dots,n_k\ge0$, and degrees $\beta_0,\beta_1,\dots,\beta_k\in S$, we define the composition maps
		to be the multilinear composition:
		\[
		\gamma_{k;\,n_1,\dots,n_k}^{\,\beta_0;\,\beta_1,\dots,\beta_k}(f;g_1,\dots,g_k)
		=
		f\ \circ\ \bigl(g_1\ \otimes \ \cdots\ \otimes \ g_k\bigr) 
		\]
		The partial compositions are defined as follows.
		For $f\in\End^S_A(k,\beta_0)$ and $g\in\End^S_A(n,\beta)$, we define
		$
		f\circ_i g\ \in\ \End_A^S(k+n-1,\ \beta_0+\beta)
		$
		to be the usual insertion of $g$ into the $i$-th input of $f$, recording the sum $\beta_0+\beta$ on the target.
	\end{ex}

	\subsection{Moduli spaces of pseudo-holomorphic disks as multicategories}
	\label{s_moduli_disk}
	
	Let $(X,\omega)$ be a closed symplectic manifold, and $J$ an $\omega$-tame almost complex structure.
	Let $\iota: L \to X$ be a Lagrangian submanifold, that is, $\iota^*\omega=0$ and $\dim L= \frac{1}{2}\dim X$.

	When $\iota$ is embedded, we may identify $L$ with its image in $X$ and thus view it as a subspace $L\subset X$.
	The zero element of $H_2(X,L)$ is denoted by $\theta=\theta_L$. We consider the commutative monoid
	\begin{equation}
		\label{S_L_eq}
		S_L := \{ \theta \} \cup \{ \beta\in H_2(X,L) \mid  \omega(\beta) >0\}
	\end{equation}
	
	Fix an integer $k \ge 0$ and a relative homotopy class $\beta$ in $S_L$ such that $(k,\beta)\neq (0,\theta), (1,\theta)$.
	A \textit{$J$-holomorphic stable disk of type $(k, \beta)$} consists of the data
	\[
	\mathbf u=\bigl( \Sigma, z_0,z_1,\dots, z_k,\ u\bigr)
	\]
	where:
	\begin{itemize}
		\itemsep 2pt
		\item $\Sigma$ is a connected, oriented, nodal, genus-$0$ bordered Riemann surface.
		Nodes may be \emph{boundary nodes} (on $\partial\Sigma$) or \emph{interior nodes} (in $\Sigma\setminus\partial\Sigma$).
		\item $z_0,z_1,\dots, z_k \in \partial\Sigma$ are pairwise distinct $k+1$ (rather than $k$) \emph{boundary marked points}, ordered along the induced orientation of $\partial\Sigma$; no marked point is a node.
		\item $u:\Sigma\to X$ is continuous, $J$-holomorphic on each component, and satisfies the Lagrangian boundary condition
		$u(\partial\Sigma)\subset L$, with relative class $[u]=\beta\in S_L$.
		\item we require $\mathbf u$ is \textit{stable} in the sense that the automorphism group of $\mathbf u$ is finite.
	\end{itemize}
	Here two such objects $\mathbf u=(\Sigma, (z_i) ,u)$ and $\mathbf u'=(\Sigma', (z_i'),u')$ are \emph{isomorphic} if there exists a biholomorphism $\phi:\Sigma\to\Sigma'$ sending boundary to boundary, marked points to the corresponding marked points, and nodes to nodes, such that $u=u'\circ\phi$. We call $\phi$ an isomorphism from $\mathbf u$ to $\mathbf u'$. If $\mathbf u=\mathbf u'$, then $\phi$ is called an automorphism.
	
	
	\begin{defn}
		\label{moduli_embed_defn}
		For $(k,\beta)\neq (0,\theta), (1,\theta)$, the \emph{moduli space of $J$-holomorphic stable disks of type $(k,\beta)$}, denoted by ${\mathcal M}(k,\beta)$,
		is defined to be the set of isomorphism classes $[\mathbf u]$ of such objects $\mathbf u$. It admits evaluation maps
		\[
		\ev_i :{\mathcal M} (k,\beta) \to L \quad ,\qquad i=0,1,\dots, k
		\]
		given by $\ev_i ([\mathbf u])=u(z_i)$.
		For the case\footnote{ \scriptsize  This concerns the so-called stability for pseudo-holomorphic maps in symplectic geometry. We need to assume $k+1\ge 3$ marked points to ensure the stability of a constant map.} $(k,\beta)=(1,\theta)$, we artificially define
		$\mathcal M(1,\theta)=L$ equipped with an obvious evaluation map $\ev_0=\id_L: L\to L$.
		For $(k,\beta)=(0,\theta)$, we define $\mathcal M(0,\theta)=\varnothing$.
		
		Set $\mathcal M(k)=\coprod_{\beta\in S_L}\mathcal M(k,\beta)$, and call an element in $\mathcal M(k)$ \textit{a $J$-holomorphic stable disk of type $k$}.
	\end{defn}

	\smallskip
	
	Let $T$ be the free monoid monad (\ref{list_monad_eq}).
	We then have the following:
	
	\begin{prop}
		\label{multicategory_moduli_prop}
		The moduli space system 
		\begin{equation}
			\label{moduli_M_k_beta_eq}
			\mathcal M = \coprod_{k\ge 0} \mathcal M(k)  = \coprod_{\substack{k\ge 0, \beta\in S_L}} \mathcal M(k,\beta) 
		\end{equation}
		and their evaluation maps
		\[
		\xymatrix{
			& \mathcal M  \ar[dr]^{\ev_0}\ar[dl]_{(\ev_i)}& \\
			TL & & L
		}
		\]
		form an $S_L$-labeled topological multicategory on $L$ 
	\end{prop}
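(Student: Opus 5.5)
We will verify the explicit data of Definition~\ref{S_labeled_multicategory_defn} and then invoke Proposition~\ref{prop_coloured_operad}(2) to recover the $T$-multicategory structure on $TL\xleftarrow{(\ev_i)}\mathcal M\xrightarrow{\ev_0}L$.

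\emph{Data and labeling.} The spaces are the $\mathcal M(k,\beta)$ of Definition~\ref{moduli_embed_defn}, with the Gromov topology; by \cite[Theorem 2.1.29]{FOOOBookOne} they are compact Hausdorff. The maps $\ev_i$ are continuous. The labeling $|\cdot|:\mathcal M\to S_L$ sends $[\mathbf u]$ to $\beta=[u]$. This is well defined because isomorphic stable maps have the same relative class. The class lies in $S_L$ because a nonconstant $J$-holomorphic component has positive energy, so $\omega(\beta)>0$ unless $u$ is constant, in which case $\beta=\theta$.

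\emph{Unit.} The unit $\iota:L\to\mathcal M(1,\theta)$ is the identity, using the artificial convention $\mathcal M(1,\theta)=L$. It satisfies $\ev_0\circ\iota=\ev_1\circ\iota=\id_L$ and has label $\theta$.

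\emph{Partial compositions.} We define
\[
\circ_i:\mathcal M(k,\beta_0)\times_{L,(\ev_i,\ev_0)}\mathcal M(n,\beta)\to\mathcal M(k+n-1,\beta_0+\beta)
\]
by boundary gluing. Take $\Sigma_1\sqcup\Sigma_2$ and identify $z_i\in\partial\Sigma_1$ with $z_0'\in\partial\Sigma_2$ to form a boundary node. The map $u_1\sqcup u_2$ descends to the nodal surface precisely because $u_1(z_i)=u_2(z_0')$, which is the fiber product condition. The new marked points are
\[
z_0,\dots,z_{i-1},z_1',\dots,z_n',z_{i+1},\dots,z_k,
\]
and this is the cyclic order induced by the boundary orientation of the glued surface. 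The resulting object is still genus $0$ and connected, and the boundary stays a single circle since we attach two disks at one boundary point. Stability is preserved because every component keeps its special points: the node replaces the lost marked point. The relative class is additive, $[u_1\#u_2]=\beta_0+\beta$, which gives label additivity. If either factor lies in $\mathcal M(1,\theta)=L$, we set $\circ_i$ to be the identity on the other factor, which is forced by the unit axiom. The composition is well defined on isomorphism classes, since isomorphisms of the two pieces glue to an isomorphism of the result. The full $\gamma$ is assembled from the $\circ_i$, or defined directly by simultaneous gluing at all inputs.

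\emph{Axioms and continuity.} The two associativity identities \eqref{operad_axiom_standard} hold on the nose. Both sides yield the same nodal surface, obtained from the same three pieces glued at the same pair of points, with the same ordering of marked points; this is a combinatorial check on the dual planar tree, as for grafting in Example~\ref{ex_rooted_trees}. The unit axioms hold by construction.

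The main obstacle is continuity of $\circ_i$ in the Gromov topology, together with the degenerate cases. For continuity, we use that Gromov convergence is defined componentwise on the stabilized domains, up to reparametrization and nodal degenerations. If $\mathbf u_{1,a}\to\mathbf u_1$ and $\mathbf u_{2,a}\to\mathbf u_2$, the convergence data for the two sequences glue across the fixed node to give convergence data for $\mathbf u_{1,a}\circ_i\mathbf u_{2,a}$, with energies adding. We would cite FOOO's description of the boundary strata $\mathcal M(k_1,\beta_1)\,{}_{\ev_i}\!\times_{\ev_0}\mathcal M(k_2,\beta_2)\subset\partial\mathcal M(k_1+k_2-1,\beta_1+\beta_2)$, which realizes $\circ_i$ as a continuous inclusion of a boundary stratum.

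For the degenerate cases, a constant component with too few special points never arises. If one factor is in $\mathcal M(0,\beta)$, the result still has enough special points on each component, because the node counts as a special point.
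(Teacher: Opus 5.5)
Your proposal is correct and follows the paper's proof essentially step for step: label by the class $\beta$, take the unit to be the inclusion of the artificial $\mathcal M(1,\theta)=L$, define $\circ_i$ by gluing $z_i$ to the output marked point (as the identity when one factor is $(1,\theta)$), and check additivity of classes and associativity directly. You are in fact more thorough than the paper. Its proof only cites compactness, Hausdorffness and continuity of the evaluation maps, whereas you also argue stability of the glued object, well-definedness on isomorphism classes, and (via the boundary-strata description) continuity of $\circ_i$.
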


	\begin{proof}
		The labeling map $|\cdot|$ as in Definition \ref{S_labeled_multicategory_defn} is defined by requiring $|\mathbf u|=\beta$ for each $\mathbf u\in\mathcal M(k,\beta)$.
		The unit map $\iota: L\to \mathcal M(1,\theta) \xhookrightarrow{} \mathcal M $ is simply given the inclusion. 
		As discussed around (\ref{gamma_n1nk_eq}) in the proof of Proposition \ref{prop_coloured_operad}, establishing the multiplication map $\gamma$ is equivalent to specifying the partial composition maps with the required axioms:
		\begin{equation}
			\label{circ_i_moduli_eq}
			\circ_i=\circ_i^{\beta_1,\beta_2}: \mathcal M (k_1,\beta_1) \times_{L,(\ev_i,\ev_0)} \mathcal M (k_2,\beta_2) \to \mathcal M (k_1+k_2-1, \beta_1+\beta_2)
		\end{equation}
		Indeed, if either $(k_1,\beta_1)$ or $(k_2,\beta_2)$ is $(1,\theta)$, then it degenerates to the identity map.
		In other cases, given $[\mathbf u_1]=[\Sigma_1, z_0,z_1,\dots, z_{k_1}, u_1] \in\mathcal M (k_1,\beta_1)$ and $[\mathbf u_1]=[\Sigma_2, w_0, w_1,\dots, w_{k_2}, u_2] \in\mathcal M (k_2,\beta_2)$, we define 
		\[
		[\mathbf u_1]\circ_i [\mathbf u_2]=[\Sigma, y_0, y_1,\dots, y_{k_1+k_2-1}, u]
		\]
		as follows.
		The nodal surface $\Sigma$ is obtained by gluing $z_i\in \Sigma_1$ with $w_0\in \Sigma_2$, and the marked points are $(y_0,y_1,\dots, y_{k_1+k_2-1})=(z_0,\dots, z_{i-1}, w_1, \dots, w_{k_2}, z_{i+1},\dots, z_{k_1})$.
		The map $u:\Sigma \to X$ is defined by declaring $u|_{\Sigma_1}=u_1$ and $u|_{\Sigma_2}=u_2$; this is well-defined since $u_1(z_i)=\ev_i(\mathbf u_1)=\ev_0(\mathbf u_2)=u_2(w_0)$.
		The class of $u$ is simply the sum of the classes of $u_1$ and $u_2$, 
		namely $\beta_1+\beta_2$. 
		By construction, one may readily verify that the associativity axioms (\ref{operad_axiom_standard}) hold. Lastly, this is a topological multicategory, because it is known that each $\mathcal M (k,\beta)$ is a compact and Hausdorff topological space \cite[Theorem 7.1.43]{FOOOBookTwo}, and the evaluation maps $\ev_0,\ev_i$ are continuous \cite[Proposition 7.1.1]{FOOOBookTwo}.
	\end{proof}

	\section{$\mathbf {fc}$-multicategories}
	\label{s_fc_multicategory}
	
	An $\mathbf{fc}$-multicategory is a general framework introduced by Leinster \cite{leinster1999fc}, which can be viewed as a kind of \textit{"2-dimensional multicategory,"} where one keeps track not only of objects and multi-input operations, but also of two different kinds of morphisms and the 2-cells that relate them. 
	The letter "$\mathbf{fc}$" stands for "free category".
	This can simultaneously encompass ordinary categories, multicategories, bicategories, monoidal categories, double categories, and so on. In this way, $\mathbf {fc}$-multicategories provide a convenient unifying setting in which many apparently different categorical constructions can be studied side by side.
	And, one of our goals in this paper is to explore its potential topological and geometric realizations.

	\subsection{Directed graphs and the functor $\mathbf{fc}$}
	\label{s_fc_directedgraphs}
	Let $\mathcal D$ be the category of \textit{directed graphs} (sometimes people also call them \textit{quivers}) defined by the functor category $\mathcal D=[\mathbb H^{\mathrm{op}}, \mathbf{Set}]$ where $\mathbb H$ is the category with two objects and two distinguished morphisms $\{0 \rightrightarrows 1\}$.
	Specifically, an object in $\mathcal D$, called a \textit{directed graph}, is a tuple $(V,E, s, t)$ consisting of a set $V$ of vertices and a set $E$ of directed edges, equipped with two functions $s,t:E\to V$.
	Given two vertices $v,v'\in V$, we write
	$
	E(v,v')=\{ e\in E \mid s(e)=v , \ t(e)=v'\}
	$
	for the set of edges from $v$ to $v'$.
	
	\textit{Slightly abusing notation, we often write $E=(V,E,s,t)$ for a directed graph $(V,E,s,t)$; thus, $E$ may denote either the graph itself or its edge set, depending on the context.}

	A directed sub-graph of $E$ is a quadruple
	$
	E'=(V',E',s|_{E'},t|_{E'}),
	$
	where $V'\subseteq V$, $E'\subseteq E$, and $s(E'),t(E')\subseteq V'$.
	A \textit{morphism} $\pmb \phi=(\bar\phi,\phi):E\to E'$ between two directed graphs $E=(V,E,s,t)$ and $E'=(V',E',s',t')$ consists of two continuous maps
	$
	\bar\phi:V\to V'$, $\phi :E\to E'$
	such that the source and target maps are preserved, i.e.
	$		\bar\phi \circ s = s'\circ \phi$ and $\bar\phi\circ t = t'\circ \phi$.
	Composition and identities are defined componentwise in the evident way.
	\[
	\xymatrix{ 
		E \ar@<2pt>[d]^t \ar@<-2pt>[d]_s \ar[rr]^{\phi}  & & E'  \ar@<2pt>[d]^{t'} \ar@<-2pt>[d]_{s'} \\
		V \ar[rr]^{\bar\phi}  & & V'
	}
	\]

	Let $\mathbf{Cat}$ be the category of small categories.
	Every category can be viewed as a directed graph, but the converse is not true in general.
	There is a functor
	$
	F: \mathcal D \longrightarrow \mathbf{Cat}
	$
	called the \emph{path category} functor, which is left adjoint to the forgetful functor $U: \mathbf{Cat} \to \mathcal D$.
	Specifically, for a directed graph $E=(V,E,s,t)$, the objects of the category $F(E)$ are the vertices in $V$, and a morphism $x\to y$ in $F(E)$ is a (possibly empty) finite composable path
	\[
	\xymatrix{
		\cdot & \cdot\ar[l]_{e_n} & \cdot \ar[l] & \cdot \ar[l] & \cdot \ar[l]_{e_1} 
	}
	\]
	denoted by $(e_1 , \dots, e_n)$. For $n=0$ we get the empty path $x\to x$ which is the identity. Composition is concatenation of paths.
	For a morphism of directed graphs $\pmb \phi:E\to E'$,
	define
	$
	F(\pmb \phi):F(E)\longrightarrow F(E')
	$
	by $x\mapsto \bar\phi(x)$ on objects and by sending a path
	$(e_1,\dots, e_n)$ to the path $(\phi(e_1), \dots, \phi(e_n))$ on morphisms.

	Define 
	\begin{equation}
		\label{fc_eq}
		\mathbf{fc}:=U\circ F : \mathcal D \to \mathcal D
	\end{equation}
	Specifically, given a directed graph $E=(V,E,s,t)$, the new directed graph $\mathbf{fc}(E)$, denoted by 
	\[
	E^*=(V,E^\ast,s^\ast,t^\ast),
	\]
	is defined as follows: The set $V$ of vertices is unchanged. The set $E^\ast$ of edges is the set of all finite composable paths in the original $E$, that is,
	\begin{equation}
		\label{E*_eq}
		E^\ast\ = \mathbf{fc}(E) = \ \bigsqcup_{n\ge 0}\ \{(e_1,\dots,e_n)\in E^{\times n} \mid t(e_i)=s(e_{i+1})\}
	\end{equation}		
	For a path $\vec e=(e_1,\dots,e_n)$ put $s^\ast(\vec e):=s(e_1)$ and $t^\ast(\vec e):=t(e_n)$. For a morphism of directed graphs
	$\pmb \phi=(\bar \phi,\phi): E\to E'$, we define a new morphism of directed graphs $\mathbf{fc}(\pmb\phi):\mathbf{fc}(E)\to \mathbf{fc}(E')$ by
	\[
	\overline{\mathbf{fc}(\pmb\phi)}:=\bar\phi,\qquad
	\mathbf{fc}(\pmb\phi)(e_1,\dots,e_n):=\big(\phi(e_1),\dots,\phi_1(e_n)\big).
	\]
	For a composable family of edge-strings 
	\[
	\vec e_i=(e_{i1},\dots,e_{i n_i})\in E^{*}\quad(1\le i\le k),
	\]
	with $t^*(\vec e_i)=t(e_{in_i})=s(e_{i+1,1})=s^*(\vec e_{i+1})$, 
	we write their \textit{concatenation} in $E^{*}$ as
	\[
	\vec e_1 * \cdots * \vec e_k
	\ =\ (e_{11},\dots,e_{1n_1},\,\dots,\,e_{k1},\dots,e_{kn_k}).
	\]
	One can also view the concatenation as a map
	\begin{equation}
		\label{concatenation_eq}
		E^*\times_V E^* \times_V \cdots \times_V E^* \to E^*  
	\end{equation}

	\begin{defn}
		\label{profile_loop_defn}
		A \textit{profile-loop} of a directed graph $E=(V,E, s,t)$ is defined as a pair
		\[
		(\vec e ; e_0) = ((e_1,\dots, e_n) ; e_0) \ \in  E^*\times E
		\]
		with $\vec e=(e_1,\dots, e_n)\in E^*$, $e_0\in E$, and matching endpoints in the sense that 
		\[
		s^*(\vec e) =s(e_0) \qquad  \text{and} \qquad t^*(\vec e)=t(e_0)
		\]
		In other words, it looks like
		\[
		\xymatrix{
			\cdot & \cdot\ar@/_5pt/[l]_{e_n} & \cdot \ar@/_5pt/[l] & \cdot \ar@/_5pt/[l] & \cdot \ar@/_5pt/[l]_{e_1} \ar@/^10pt/ [llll]^{e_0}
		}
		\]
		in the graph.
		Denote the set of all profile-loops in $(V,E)$ by $\mathcal P_E$.
		Besides, we call $(e; e)$ the {identity profile-loop} at $e$.
		The special case $n=0$ is allowed above, namely, $(\varnothing; e_0)$ with $s(e_0)=t(e_0)$ is also a profile-loop and is depicted as a tree with one vertex and one edge:
		\[
		\xymatrix@1{ \bullet \ar@(dr,dl)[]^{e_0} }
		\]
	\end{defn}

	\subsection{$\mathbf{fc}$-multicategory in concrete terms}
	\label{s_fc_multicategory_unpack}

	\begin{figure}
		\centering
		\begin{tikzpicture}[x=0.75pt,y=0.75pt,yscale=-0.7,xscale=0.7]
			
			\draw  [dash pattern={on 0.75pt off 3.75pt}]  (205.22,44.77) .. controls (286.79,44.1) and (310,62.91) .. (339.53,85.08) ;
			\draw    (393.11,195.4) -- (48.44,194.48) ;
			\draw [shift={(46.44,194.47)}, rotate = 0.15] [color={rgb, 255:red, 0; green, 0; blue, 0 }  ][line width=0.75]    (10.93,-3.29) .. controls (6.95,-1.4) and (3.31,-0.3) .. (0,0) .. controls (3.31,0.3) and (6.95,1.4) .. (10.93,3.29)   ;
			\draw    (402,185) -- (372,132.73) ;
			\draw [shift={(371,131)}, rotate = 60.14] [color={rgb, 255:red, 0; green, 0; blue, 0 }  ][line width=0.75]    (10.93,-3.29) .. controls (6.95,-1.4) and (3.31,-0.3) .. (0,0) .. controls (3.31,0.3) and (6.95,1.4) .. (10.93,3.29)   ;
			\draw    (66,121) -- (39.4,180.76) ;
			\draw [shift={(38.59,182.59)}, rotate = 293.99] [color={rgb, 255:red, 0; green, 0; blue, 0 }  ][line width=0.75]    (10.93,-3.29) .. controls (6.95,-1.4) and (3.31,-0.3) .. (0,0) .. controls (3.31,0.3) and (6.95,1.4) .. (10.93,3.29)   ;
			\draw    (140,56) -- (85.61,95.82) ;
			\draw [shift={(84,97)}, rotate = 323.79] [color={rgb, 255:red, 0; green, 0; blue, 0 }  ][line width=0.75]    (10.93,-3.29) .. controls (6.95,-1.4) and (3.31,-0.3) .. (0,0) .. controls (3.31,0.3) and (6.95,1.4) .. (10.93,3.29)   ;
			
			\draw (22.45,188.32) node [anchor=north west][inner sep=0.75pt]    {$v_{n}$};
			\draw (400.79,188.32) node [anchor=north west][inner sep=0.75pt]    {$v_{0}$};
			\draw (65.61,102.32) node [anchor=north west][inner sep=0.75pt]    {$v_{n-1}$};
			\draw (140.18,42.52) node [anchor=north west][inner sep=0.75pt]    {$v_{n-2}$};
			\draw (357.1,112.06) node [anchor=north west][inner sep=0.75pt]    {$v_{1}$};
			\draw (32.59,136.29) node [anchor=north west][inner sep=0.75pt]    {$e_{n}$};
			\draw (86,50) node [anchor=north west][inner sep=0.75pt]    {$e_{n-1}$};
			\draw (389.13,149.31) node [anchor=north west][inner sep=0.75pt]    {$e_{1}$};
			\draw (208.01,200.33) node [anchor=north west][inner sep=0.75pt]    {$e_{0}$};
			\draw (207.51,121.5) node [anchor=north west][inner sep=0.75pt]    {$\mathbf{u}$};
			
		\end{tikzpicture}
		\caption{a 2-cell in a vertically discrete $\mathbf {fc}$-multicategory}
		\label{fig:2_cell_vert_discrete}
	\end{figure}
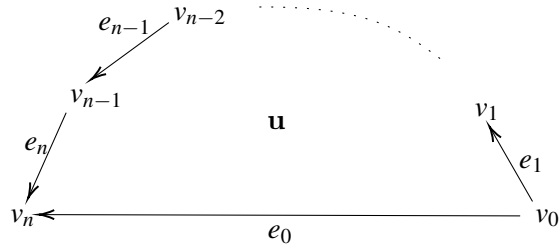

	It is known that $\mathbf{fc}$ is a cartesian monad (see e.g. \cite[Section 5.1]{leinster2004higher}), so the general definitions for $T$-categories with a cartesian monad $T$ (Definition \ref{multicategory_defn}) applies. Namely, a \textbf{\textit{$\mathbf{fc}$-multicategory}} is defined as the data of a tuple
	\begin{equation}
		\label{fc_mul_tuple}
		\mathscr M = ( V, E , \mathcal V, \mathcal M, \pmb d, \pmb c, \pmb \iota, \pmb \gamma)
	\end{equation}
	consisting of two directed graphs $E=(V,E,s,t)$, $\mathcal M=(\mathcal V, \mathcal M, \hat s, \hat t)$ together with a diagram of directed graphs $E^* \xleftarrow{\pmb d} \mathcal M \xrightarrow{\pmb c} E$ where $E^*=\mathbf{fc}(E)$ and $\pmb d=(\bar d,d)$, $\pmb c=(\bar c,c)$ are maps of directed graphs.
	These data satisfy the conditions as described in Definition \ref{multicategory_defn}. There are also associated unit and multiplication maps $\pmb \iota=(\bar\iota,\iota): (V,E)\to (\mathcal V, \mathcal M)$ and $\pmb \gamma=(\bar\gamma,\gamma): (\mathcal V,\mathcal M) \circ (\mathcal V,\mathcal M) \to (\mathcal V,\mathcal M)$ satisfying the associativity and unit axioms.
	We also say that the $\mathbf{fc}$-multicategory \textit{$\mathscr M$ is over $E$} and $E=(V,E)$ is the \textit{\textbf{underlying directed graph}} of $\mathscr M$.
	
	To illustrate, we have the following diagram:
	\[
	\label{fc_multicategory_diagram_2_eq}
	\xymatrix{
		& (\mathcal V, \mathcal M) \ar[dr]^{\pmb c} \ar[dl]_{\pmb d}& \\
		(V, E^*) & & (V, E)
	}
	\]

	Following the works of Leinster \cite{leinster1999fc,leinster2004higher}, we can unpack the data for a $\mathbf{fc}$-multicategory as follows: 
	
	\paragraph{$\bullet$}
	First, elements of $V$ are called \textit{objects} or \textit{0-cells}; elements of $E$ are called \textit{horizontal 1-cells}; elements of $\mathcal V$ are called \textit{vertical 1-cells}; elements of $\mathcal M$ are called \textit{2-cells}.
	Specifically, an element of $\mathcal M$ can be represented as\footnote{ \scriptsize  To maintain compatibility with the standard orientation of pseudo-holomorphic curves in symplectic geometry, we adopt in (\ref{2_cell_eq}) arrow directions opposite to Leinster’s convention \cite{leinster2004higher}. We apologize for any potential confusion.}
	\begin{equation}
		\label{2_cell_eq}
		\xymatrix{
			v_n\ar@{<-}[rr]^{e_n} \ar[d]^{f'} & & v_{n-1} \ar@{<-}[rr]^{e_{n-1}} & & & \cdots & {}\ar@{<-}[rr]^{e_1} & & v_0 \ar[d]^{f} \\
			v' \ar@{<-}[rrrrrrrr]_{e_0} & &  & & \ar@{}[u]|{\Downarrow \ \mathbf u}&  &  & & v
		}
	\end{equation}
	where $v_0,\dots, v_n, v, v'\in V$ are 0-cells, $e_1,\dots, e_n, e_0\in E$ are horizontal 1-cells, $f,f'\in\mathcal V$ are vertical 1-cells, and $\mathbf u\in\mathcal M$ is a 2-cell.
	Note that the edge-string $\vec e=(e_1,\dots, e_n)\in E^*$.

	\paragraph{$\bullet$} For the source and target maps $s,t:E\to V$ and $s^*,t^*:E^*\to V$, we have $s(e_0)=v$, $t(e_0)=v'$; $s^*(\vec e)=v_0$, $t^*(\vec e)=v_n$; $\hat s (\mathbf u)= f$, $\hat t(\mathbf u)= f'$ in the rectangle (\ref{2_cell_eq}).

	\paragraph{$\bullet$}  The maps $\bar d, \bar c$ of the vertices give a diagram $V \xleftarrow{\bar d} \mathcal V\xrightarrow{\bar c} V$.
	They satisfy $\bar d(f)=v_0$ and $\bar c(f)=v$ in (\ref{2_cell_eq}).
	Besides, composition and identity functions $\bar\gamma,\bar\iota$ (the vertex parts of $\pmb\gamma$ and $\pmb\iota$) make the 0-cells and vertical 1-cells into a category $(V,\mathcal V)$.

	\begin{defn}
		\label{vert_disc_fc_defn}
		A $\mathbf{fc}$-multicategory $\mathscr M=(V,E,\mathcal V,\mathcal M)$ is called \textbf{\textit{vertically discrete}} if all vertical 1-cells are identities \cite[Example 5.1.4]{leinster2004higher}. That is to say, the category formed by $V,\mathcal V, \bar d,\bar c$ is a discrete category, and thus  $V \equiv \mathcal V$. The absence of vertical edges allow us to draw the diagram \eqref{2_cell_eq} as in Figure \ref{fig:2_cell_vert_discrete}.
	\end{defn}

	\paragraph{$\bullet$} The maps $d,c$ of the edges give a diagram $E^* \xleftarrow{d} \mathcal M\xrightarrow{c} E$.
	In the rectangle (\ref{2_cell_eq}), we have $d(\mathbf u)=(e_1,e_2,\dots, e_n)\in E^*$ and $c(\mathbf u)=e_0\in E$.
	If we write
	$
	\mathcal M(\vec e ; e_0) = \{\mathbf u\in\mathcal M \mid d(\mathbf u)=\vec e \ , \  c(\mathbf u)= e_0 \}
	$
	for the \textbf{\textit{fiber}} of $\mathcal M$ over $(\vec e, e_0)$,
	then $\mathcal M$ admits the decomposition
	$
	\mathcal M = \coprod_{e_0\in E} \ \coprod_{\vec e \in E^*} \  \mathcal M(\vec e; e_0)
	$.
	For a vertically discrete $\mathbf{fc}$-multicategory, it can be further refined to
	\begin{equation}
		\label{M_decompose_profile_eq}
		\mathcal M= \coprod_{(\vec e; e_0)\in\mathcal P_E}  \mathcal M(\vec e; e_0)
	\end{equation}
	where $\mathcal P_E$ is the set of profile-loops in the graph $(V,E)$; see Definition \ref{profile_loop_defn}.

	\begin{figure}[h]
		\centering
		\begin{tikzpicture}[x=0.75pt,y=0.75pt,yscale=-0.7,xscale=0.7]
			
			\draw    (508.23,306.53) -- (136.4,305.57) ;
			\draw [shift={(134.4,305.56)}, rotate = 0.15] [color={rgb, 255:red, 0; green, 0; blue, 0 }  ][line width=0.75]    (10.93,-3.29) .. controls (6.95,-1.4) and (3.31,-0.3) .. (0,0) .. controls (3.31,0.3) and (6.95,1.4) .. (10.93,3.29)   ;
			\draw    (438,188) -- (263.95,146.46) ;
			\draw [shift={(262,146)}, rotate = 13.42] [color={rgb, 255:red, 0; green, 0; blue, 0 }  ][line width=0.75]    (10.93,-3.29) .. controls (6.95,-1.4) and (3.31,-0.3) .. (0,0) .. controls (3.31,0.3) and (6.95,1.4) .. (10.93,3.29)   ;
			\draw    (283,81) -- (250.14,128.36) ;
			\draw [shift={(249,130)}, rotate = 304.76] [color={rgb, 255:red, 0; green, 0; blue, 0 }  ][line width=0.75]    (10.93,-3.29) .. controls (6.95,-1.4) and (3.31,-0.3) .. (0,0) .. controls (3.31,0.3) and (6.95,1.4) .. (10.93,3.29)   ;
			\draw    (353,46) -- (293.85,70.24) ;
			\draw [shift={(292,71)}, rotate = 337.71] [color={rgb, 255:red, 0; green, 0; blue, 0 }  ][line width=0.75]    (10.93,-3.29) .. controls (6.95,-1.4) and (3.31,-0.3) .. (0,0) .. controls (3.31,0.3) and (6.95,1.4) .. (10.93,3.29)   ;
			\draw    (432,72) -- (361.88,46.68) ;
			\draw [shift={(360,46)}, rotate = 19.86] [color={rgb, 255:red, 0; green, 0; blue, 0 }  ][line width=0.75]    (10.93,-3.29) .. controls (6.95,-1.4) and (3.31,-0.3) .. (0,0) .. controls (3.31,0.3) and (6.95,1.4) .. (10.93,3.29)   ;
			\draw    (457,177) -- (438.38,81.96) ;
			\draw [shift={(438,80)}, rotate = 78.92] [color={rgb, 255:red, 0; green, 0; blue, 0 }  ][line width=0.75]    (10.93,-3.29) .. controls (6.95,-1.4) and (3.31,-0.3) .. (0,0) .. controls (3.31,0.3) and (6.95,1.4) .. (10.93,3.29)   ;
			\draw    (231,150) -- (178.8,175.13) ;
			\draw [shift={(177,176)}, rotate = 334.29] [color={rgb, 255:red, 0; green, 0; blue, 0 }  ][line width=0.75]    (10.93,-3.29) .. controls (6.95,-1.4) and (3.31,-0.3) .. (0,0) .. controls (3.31,0.3) and (6.95,1.4) .. (10.93,3.29)   ;
			\draw    (170,181) -- (120.83,289.18) ;
			\draw [shift={(120,291)}, rotate = 294.44] [color={rgb, 255:red, 0; green, 0; blue, 0 }  ][line width=0.75]    (10.93,-3.29) .. controls (6.95,-1.4) and (3.31,-0.3) .. (0,0) .. controls (3.31,0.3) and (6.95,1.4) .. (10.93,3.29)   ;
			\draw    (519,290) -- (468.06,208.69) ;
			\draw [shift={(467,207)}, rotate = 57.93] [color={rgb, 255:red, 0; green, 0; blue, 0 }  ][line width=0.75]    (10.93,-3.29) .. controls (6.95,-1.4) and (3.31,-0.3) .. (0,0) .. controls (3.31,0.3) and (6.95,1.4) .. (10.93,3.29)   ;
			
			\draw (109.27,300) node [anchor=north west][inner sep=0.75pt]    {$v_{n}$};
			\draw (517.22,300) node [anchor=north west][inner sep=0.75pt]    {$v_{0}$};
			\draw (239.29,140) node [anchor=north west][inner sep=0.75pt]    {$v_{i}$};
			\draw (445.11,188) node [anchor=north west][inner sep=0.75pt]    {$v_{i-1}$};
			\draw (309.26,311.98) node [anchor=north west][inner sep=0.75pt]    {$e_{0}$};
			\draw (308.6,230.07) node [anchor=north west][inner sep=0.75pt]    {$\mathbf{u}$};
			\draw (349.6,106.07) node [anchor=north west][inner sep=0.75pt]    {$\mathbf{u} '$};
			\draw (338.26,169.98) node [anchor=north west][inner sep=0.75pt]    {$e_{i}$};
			\draw (50,165.4) node [anchor=north west][inner sep=0.75pt]    {$\mathbf{u} \circ_i \mathbf u' =$};
		\end{tikzpicture}
		\caption{The partial composition of 2-cells $\mathbf u$ and $\mathbf u'$}
		\label{fig:2_cell_composition}
	\end{figure}
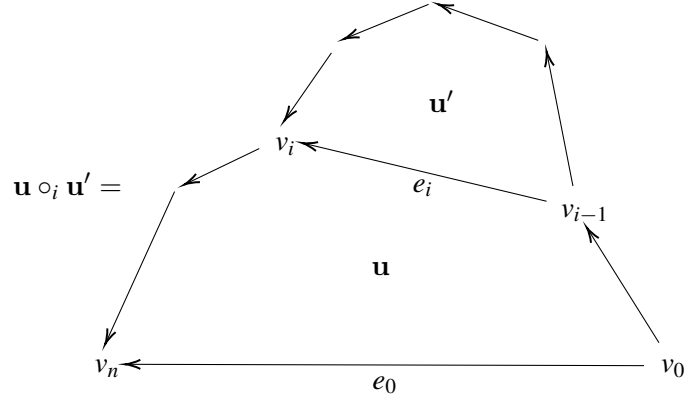

	\paragraph{$\bullet$} 
	Unwinding the definition of the composite graph $\mathcal M\circ \mathcal M$ in Definition~\ref{multicategory_defn}
	(for $T=\mathbf{fc}$), an edge of $\mathcal M\circ \mathcal M$ can be described as a tuple
	$(\mathbf u;\ \mathbf u_1,\dots,\mathbf u_n)$
	where $\mathbf u\in \mathcal M(\vec e;\,e_0)$ is a $2$-cell with
	$\vec e=(e_1,\dots,e_n)\in E^*$ and $e_0\in E$; for each $i=1,\dots,n$, one has a $2$-cell
	$\mathbf u_i\in \mathcal M(\vec e_i;\,e_i)$
	for some word $\vec e_i\in E^*$ so that $c(\mathbf u_i)=e_i$ matches the $i$-th input of $\mathbf u$.
	To describe the vertex set of $\mathcal M\circ \mathcal M$, recall that the vertices of $\mathcal M$ are vertical $1$-cells in $\mathcal V$. Then, a vertex of $\mathcal M\circ \mathcal M$
	is a pair
	$(f_1,f_2)\in \mathcal V\times_V \mathcal V$
	of composable vertical $1$-cells, i.e. $\bar c(g_1)=\bar d(g_2)$, representing the left and right vertical boundaries of a pasted rectangle.
	Given this, we can describe the multiplication $\pmb\gamma=(\bar\gamma,\gamma): \mathcal M\circ \mathcal M\to \mathcal M$ as follows: its vertex part $\bar\gamma$ is just he composition in the category $(V,\mathcal V)$ of vertical 1-cells, and its edge part is given by
	\[
	\quad \gamma:\ \mathcal M(\vec e;\,e_0)\ \times\ \bigtimes_{i=1}^n \mathcal M(\vec e_i;\,e_i)
	\ \longrightarrow\ \mathcal M(\vec e_1 \ast \cdots \ast \vec e_n;\,e_0),
	\quad  \ \ (\mathbf u_1,\dots,\mathbf u_n, \mathbf u)\mapsto \mathbf u\circ (\mathbf u_1,\dots, \mathbf u_n)
	\]
	where $\vec e=(e_1,\dots, e_n)\in E^*$ and $\vec e_1 \ast \cdots \ast \vec e_n\in E^*$ is the concatenation (\ref{concatenation_eq}).
	See also \cite[page 2]{leinster1999fc} and \cite[(5:2)]{leinster2004higher} for more detailed diagrams.
	The edge part $\iota$ of the unit $\pmb\iota=(\bar\iota ,\iota)$ assigns to every horizontal 1-cell $e$ an identity 2-cell 
	\[
	\id_e=\iota(e)
	\]
	with $\bar d(\id_e)=\bar c(\id_e)=e$.
	These maps all need to obey the corresponding associativity and identity laws. 
	Roughly, a diagram of pasted-together 2-cells with a rectangular boundary should have a well-defined composition outcome. 
	Moreover, the composition of 2-cells can be equivalently expressed in terms of certain partial compositions of the form 
	\[
	\circ_i: \mathcal M(e_1,\dots, e_n; e_0 ) \times \mathcal M(e_1',\dots, e_m'; e_i) \to \mathcal M(e_1,\dots, e_{i-1}, e_1', \dots, e_m', e_{i+1},\dots, e_n; e_0)
	\]
	where
	\[
	\mathbf u\circ_i\mathbf u' : =
	\mathbf u\circ (\id_{e_1}, \dots , \mathbf u',\dots, \id_{e_n})
	\]
	For a vertically discrete $\mathbf{fc}$-multicategory, this partial composition $\circ_i$ can be depicted as in Figure \ref{fig:2_cell_composition}.
	For the special case when $\mathbf u'$ is a 2-cell with empty input (cf. Definition \ref{profile_loop_defn}), the partial composition then looks like in Figure \ref{fig:2_cell_composition_curved}.
	\footnote{ \scriptsize  Intuitively, these pictures may likely remind us of the bubbling of obstructing pseudo-holomorphic disks in the studies of (curved) Lagrangian Floer theory and Fukaya category.}

	\begin{figure} 
		\centering
		\begin{tikzpicture}[x=0.75pt,y=0.75pt,yscale=-0.7,xscale=0.7]
			
			\draw    (350.11,249.73) -- (59.71,248.96) ;
			\draw [shift={(57.71,248.95)}, rotate = 0.15] [color={rgb, 255:red, 0; green, 0; blue, 0 }  ][line width=0.75]    (10.93,-3.29) .. controls (6.95,-1.4) and (3.31,-0.3) .. (0,0) .. controls (3.31,0.3) and (6.95,1.4) .. (10.93,3.29)   ;
			\draw    (295.18,155.28) -- (159.46,122.28) ;
			\draw [shift={(157.51,121.81)}, rotate = 13.66] [color={rgb, 255:red, 0; green, 0; blue, 0 }  ][line width=0.75]    (10.93,-3.29) .. controls (6.95,-1.4) and (3.31,-0.3) .. (0,0) .. controls (3.31,0.3) and (6.95,1.4) .. (10.93,3.29)   ;
			\draw    (133.27,125) -- (92.82,144.84) ;
			\draw [shift={(91.03,145.72)}, rotate = 333.87] [color={rgb, 255:red, 0; green, 0; blue, 0 }  ][line width=0.75]    (10.93,-3.29) .. controls (6.95,-1.4) and (3.31,-0.3) .. (0,0) .. controls (3.31,0.3) and (6.95,1.4) .. (10.93,3.29)   ;
			\draw    (85.55,149.7) -- (47.26,235.53) ;
			\draw [shift={(46.45,237.35)}, rotate = 294.05] [color={rgb, 255:red, 0; green, 0; blue, 0 }  ][line width=0.75]    (10.93,-3.29) .. controls (6.95,-1.4) and (3.31,-0.3) .. (0,0) .. controls (3.31,0.3) and (6.95,1.4) .. (10.93,3.29)   ;
			\draw    (358.53,236.56) -- (318.91,172.12) ;
			\draw [shift={(317.86,170.42)}, rotate = 58.41] [color={rgb, 255:red, 0; green, 0; blue, 0 }  ][line width=0.75]    (10.93,-3.29) .. controls (6.95,-1.4) and (3.31,-0.3) .. (0,0) .. controls (3.31,0.3) and (6.95,1.4) .. (10.93,3.29)   ;
			\draw    (244.34,49.3) .. controls (293.37,136.51) and (147.25,109.34) .. (225.92,47.06) ;
			\draw [shift={(227.13,46.11)}, rotate = 142.27] [color={rgb, 255:red, 0; green, 0; blue, 0 }  ][line width=0.75]    (10.93,-3.29) .. controls (6.95,-1.4) and (3.31,-0.3) .. (0,0) .. controls (3.31,0.3) and (6.95,1.4) .. (10.93,3.29)   ;
			\draw   (404,159.5) -- (435.2,159.5) -- (435.2,154) -- (456,165) -- (435.2,176) -- (435.2,170.5) -- (404,170.5) -- cycle ;
			\draw    (794.11,248.73) -- (503.71,247.96) ;
			\draw [shift={(501.71,247.95)}, rotate = 0.15] [color={rgb, 255:red, 0; green, 0; blue, 0 }  ][line width=0.75]    (10.93,-3.29) .. controls (6.95,-1.4) and (3.31,-0.3) .. (0,0) .. controls (3.31,0.3) and (6.95,1.4) .. (10.93,3.29)   ;
			\draw    (658,129) -- (537.01,144.46) ;
			\draw [shift={(535.03,144.72)}, rotate = 352.72] [color={rgb, 255:red, 0; green, 0; blue, 0 }  ][line width=0.75]    (10.93,-3.29) .. controls (6.95,-1.4) and (3.31,-0.3) .. (0,0) .. controls (3.31,0.3) and (6.95,1.4) .. (10.93,3.29)   ;
			\draw    (529.55,148.7) -- (491.26,234.53) ;
			\draw [shift={(490.45,236.35)}, rotate = 294.05] [color={rgb, 255:red, 0; green, 0; blue, 0 }  ][line width=0.75]    (10.93,-3.29) .. controls (6.95,-1.4) and (3.31,-0.3) .. (0,0) .. controls (3.31,0.3) and (6.95,1.4) .. (10.93,3.29)   ;
			\draw    (802.53,235.56) -- (686.5,133.32) ;
			\draw [shift={(685,132)}, rotate = 41.38] [color={rgb, 255:red, 0; green, 0; blue, 0 }  ][line width=0.75]    (10.93,-3.29) .. controls (6.95,-1.4) and (3.31,-0.3) .. (0,0) .. controls (3.31,0.3) and (6.95,1.4) .. (10.93,3.29)   ;
			
			\draw (35.98,237.41) node [anchor=north west][inner sep=0.75pt]    {};
			\draw (355.18,238.63) node [anchor=north west][inner sep=0.75pt]    {};
			\draw (138.44,111.46) node [anchor=north west][inner sep=0.75pt]    {$v$};
			\draw (299.43,148.25) node [anchor=north west][inner sep=0.75pt]    {$v$};
			\draw (192.55,187.26) node [anchor=north west][inner sep=0.75pt]    {$\mathbf{u}$};
			\draw (215.86,139.37) node [anchor=north west][inner sep=0.75pt]    {$e$};
			\draw (229.81,39.09) node [anchor=north west][inner sep=0.75pt]    {$v$};
			\draw (343.86,188.37) node [anchor=north west][inner sep=0.75pt]    {$e'$};
			\draw (101.86,112.37) node [anchor=north west][inner sep=0.75pt]    {$e''$};
			\draw (479.98,236.41) node [anchor=north west][inner sep=0.75pt]    {};
			\draw (799.18,237.63) node [anchor=north west][inner sep=0.75pt]    {};
			\draw (663,117) node [anchor=north west][inner sep=0.75pt]    {$v$};
			\draw (600,186.26) node [anchor=north west][inner sep=0.75pt]    {$\mathbf{u}\circ_i \mathbf u'$};
			\draw (758.86,169.37) node [anchor=north west][inner sep=0.75pt]    {$e'$};
			\draw (584.86,112.37) node [anchor=north west][inner sep=0.75pt]    {$e''$};
			\draw (221.55,68.26) node [anchor=north west][inner sep=0.75pt]    {$\mathbf{u} '$};
		\end{tikzpicture}
		\caption{The partial composition with $\mathbf u'$ of empty input resolves the horizontal 1-cell input and removes that slot.}
		\label{fig:2_cell_composition_curved}
	\end{figure}
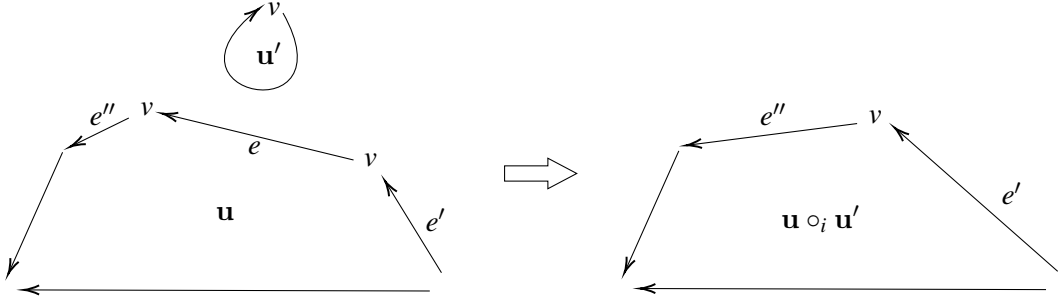

	\begin{ex}
		\label{suspension_ex}
		Recall that a (non-symmetric) operad is a multicategory with only one object. Further, by \cite[Example 5.1.6 \& 5.1.7]{leinster2004higher}, a multicategory $M$ can be viewed as a (vertically discrete) $\mathbf{fc}$-multicategory $\Sigma M$ as follows. We call $\Sigma M$ the \textit{suspension} of $M$. It only has a single 0-cell and a single vertical 1-cell; its horizontal 1-cells are objects of $M$.
		In this way, given objects $c_1,\dots, c_n,c_0$ of $M$, a multicategory composition $\theta$ from $(c_1,\dots, c_n)$ to $c$ can be viewed as a 2-cell in $\Sigma M$.
	\end{ex}


	\begin{ex}
		\label{profile_loop_ex}
		For a directed graph $E=(V,E,s,t)$, let $\mathcal P_E$ be the set of profile-loops as in Definition \ref{profile_loop_defn}.
		Then, the tuple $(V,E ,\mathcal P_E)$ forms a vertically discrete $\mathbf{fc}$-multicategory.
		Indeed, the 0-cells are the vertices $v\in V$; the horizontal 1-cells are the edges $e\in E$.
		The 2-cells are the set $\mathcal P_E$ of all profile-loops. (So, the set of 2-cells filling a specified rectangle as (\ref{2_cell_eq}) is a singleton.)
		The unit map $\pmb\iota=(\bar \iota,\iota): (V,E)\to (V, \mathcal P_E)$ is given by $\bar \iota=\id_V$ and $\iota(e)=(e;e) \in \mathcal P_E$.
		The multiplication map $\bar\gamma$ for the vertical 1-cells is evident.
		The multiplication map $\gamma$ for the 2-cells is given by the (partial) composition of two profile-loops (see also Figure \ref{fig:2_cell_composition}):
		\[
		\Big( (e_1,\dots, e_{n}; e_0)  \ , \ (e_1',\dots, e_{m}'; e_i) \Big) \mapsto 
		(e_1,\dots, e_{i-1}, e_1',\dots, e_{m}', e_{i+1},\dots, e_{}; e_0).
		\]
	\end{ex}

	\subsection{Maps of $\mathbf{fc}$-multicategories and factor-closedness}
	\label{s_map_fc_mul}
	
	Let
	$
	\mathscr M=(V,E,\mathcal V,\mathcal M,\pmb d,\pmb c,\pmb\iota,\pmb\gamma)$ and $
	\mathscr M'=(V',E',\mathcal V',\mathcal M',\pmb d',\pmb c',\pmb\iota',\pmb\gamma')$
	be $\mathbf{fc}$-multicategories, where
	\[
	E=(V,E,s,t),\quad \mathbf{fc}(E)=(V,E^*,s^*,t^*),\quad \mathcal M=(\mathcal V,\mathcal M,\hat s,\hat t),
	\]
	are directed graphs, and similarly for $\mathscr M'$.

	\begin{defn} 
		\label{map_fc_defn}
		A \textbf{\emph{map of $\mathbf{fc}$-multicategories}}
		\[
		\pmb \alpha = (\alpha_E, \alpha ) =(\alpha_V, \alpha_E,  \alpha_{\mathcal V},  \alpha):\ \mathscr M \longrightarrow \mathscr M'
		\]
		consists of maps of directed graphs (here we have slightly abused the notations)
		\[
		\alpha_E=	(\alpha_V, \alpha_E):\ (V,E)\to (V',E'),
		\qquad
		\alpha =	(\alpha_{\mathcal V}, \alpha ):\ (\mathcal V,\mathcal M)\to (\mathcal V',\mathcal M'),
		\]
		such that the following three compatibilities hold.
		
		\begin{enumerate}
			\itemsep 2pt
			\item  $
			\alpha \circ \pmb\iota =  \pmb\iota'\circ \alpha_E
			$.
			\item  One requires
			$
			\pmb c'\circ \alpha = \alpha_E\circ \pmb c$ and $
			\pmb d'\circ \alpha = \mathbf{fc}(\alpha_E)\circ \pmb d$
			where $\mathbf{fc}(\alpha_E):\mathbf{fc}(E)\to \mathbf{fc}(E')$ is the induced map of directed graphs
			sending $(e_1,\dots,e_n)\in E^*$ to $(\alpha_E(e_1),\dots,\alpha_E(e_n))\in (E')^*$.
			\item  Let
			$\alpha\circ \alpha
			:\ \mathcal M\circ \mathcal M \longrightarrow \mathcal M'\circ \mathcal M'$
			denote the map induced on the composite graph. Then one requires
			$
			\alpha \circ \pmb\gamma=
			\pmb\gamma'\circ ( \alpha \circ \alpha)
			$, that is, the following diagram commutes:
			\[
			\xymatrix{
				\mathcal M\circ \mathcal M \ar[rr]^{\alpha\circ \alpha}
				\ar[d]_{\pmb\gamma} & &
				\mathcal M'\circ \mathcal M' \ar[d]^{\pmb\gamma'} \\
				\mathcal M \ar[rr]_{\alpha} & & \mathcal M',
			}
			\]
		\end{enumerate}
	\end{defn}
	
	Recall that an element of $\mathcal M\circ \mathcal M$ may be represented by pasting data $
	(\mathbf u;\mathbf u_1,\dots,\mathbf u_n)$
	where $\mathbf u\in \mathcal M(\vec e;e_0)$ with $\vec e=(e_1,\dots,e_n)\in E^*$, and
	$\mathbf u_i\in \mathcal M(\vec e_i;e_i)$ for $1\le i\le n$.
	The above third condition
	means precisely that applying $\alpha$ to the composite $2$-cell equals composing after applying $\alpha$
	to each constituent $2$-cell; in other words,
	$
	\alpha (\gamma(\mathbf u_1,\dots,\mathbf u_n,\mathbf u) )
	=
	\gamma' (\alpha(\mathbf u_1),\dots, \alpha(\mathbf u_n), \alpha(\mathbf u) )$.
	Equivalently, for the partial composition $\circ_i$, one gets
	\[
	\alpha(\mathbf u\circ_i \mathbf u')
	=
	\alpha (\mathbf u)\circ_i \alpha (\mathbf u').
	\]

	In the context of Example \ref{suspension_ex}, one can check that a map of multicategories $M\to M'$ (and in particular a map of operads) is equivalently a map of the associated
	$\mathbf{fc}$-multicategories $\Sigma M\to \Sigma M'$.

	\begin{defn}
		\label{fc_submul_defn}
		A \textbf{\emph{$\mathbf{fc}$-submulticategory}} of $\mathscr M$ is the data of subsets
		\[
		V_0\subset V,\qquad E_0\subset E,\qquad \mathcal V_0\subset \mathcal V,\qquad \mathcal M_0\subset \mathcal M,
		\]
		such that the restricted structure maps make
		$
		\mathscr M_0:=(V_0,E_0,\mathcal V_0,\mathcal M_0)
		$
		into a $\mathbf{fc}$-multicategory, and the inclusion maps assemble to a map of $\mathbf{fc}$-multicategories
		$\mathscr M_0  \to  \mathscr M$.
		We say that a $\mathbf{fc}$-subcategory $\mathscr M_0$ of $\mathscr M$ is \textbf{\emph{factor-closed}} in $\mathscr M$ if for $2$-cells
		$\mathbf u\in \mathcal M(e_1,\dots,e_n;\,e_0)$ and
		$\mathbf u'\in \mathcal M(e_1',\dots,e_m';\,e_i)$
		such that the partial composition $\mathbf u\circ_i \mathbf u'$ is defined, one has
		\[
		\mathbf u\circ_i \mathbf u'\in \mathcal M_0
		\quad\Longrightarrow\quad
		\mathbf u\in \mathcal M_0\ \text{ and }\ \mathbf u'\in \mathcal M_0.
		\]
	\end{defn}

\begin{defn}
\label{full_fc_sub_defn}
A $\mathbf{fc}$-submulticategory
	$\mathscr M_0=(V_0,E_0,\mathcal V_0,\mathcal M_0)$ of $\mathscr M$ is called \textit{\textbf{full}} if the following hold:
\begin{enumerate}
		\item For any $v,w\in V_0$, the set of vertical 1-cells in $\mathcal V_0$ from $v$ to $w$ coincide with the set of vertical 1-cells in $\mathcal V$ from $v$ to $w$.
		In other words, $(V_0,\mathcal V_0)$ is the full subcategory of the vertical category $(V,\mathcal V)$ on the object set $V_0$.
		\item For every $(\vec e;e')\in E_0^*\times E_0$, $\mathcal M_0(\vec e;e') =\{\mathbf u\in \mathcal M(\vec e;e')\mid \hat s(\mathbf u),\hat t(\mathbf u)\in \mathcal V_0\}$. In particular, if $\mathscr M$ is vertically discrete, then it says that for every profile-loop $(\vec e, e')$, we have $\mathcal M_0(\vec e; e')=\mathcal M(\vec e; e')$.
	\end{enumerate}
Note that $E_0=(V_0,E_0)$ is a \textit{directed sub-graph} of $E=(V,E)$, meaning that $V_0\subset V$, $E_0\subset E$, and $s(E_0),t(E_0)\subset V_0$. Then, we say that $\mathscr M_0$ is \textit{\textbf{the full $\mathbf{fc}$-submulticategory of $\mathscr M$ on $E_0$}}.
\end{defn}

\begin{defn}\label{endpoint_closed_defn}
	A directed subgraph $E_0=(V_0,E_0)$ of $E=(V,E)$ is called \emph{\textbf{endpoint-closed}} in $E$ if the following holds:
	for any profile-loop $(\vec e;e')\in E^*\times E$ in $E$ (Definition~\ref{profile_loop_defn}), if
	$\vec e\in E_0^*$, then $e'\in E_0$.
\end{defn}

For later use, we introduce a sufficient criterion for the factor-closedness of a $\mathbf{fc}$-submulticategory.
	Let $\mathscr M=(V,E,\mathcal M,d,c,\iota,\gamma)$ be a vertically discrete $\mathbf{fc}$-multicategory.
Let $E_0=(V_0,E_0)$ be a directed subgraph of $E=(V,E)$, and let $\mathscr M_0\subset \mathscr M$ be the
\emph{full} $\mathbf{fc}$-submulticategory on $E_0$.

\begin{prop}\label{prop:endpoint_closed_factor_closed}
If $E_0$ is \textit{endpoint-closed} in $E$, then $\mathscr M_0$ is factor-closed in $\mathscr M$.
\end{prop}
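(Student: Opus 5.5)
We argue directly from the definitions. Since $\mathscr M$ is vertically discrete and $\mathscr M_0$ is the full $\mathbf{fc}$-submulticategory on $E_0$, condition (2) of Definition~\ref{full_fc_sub_defn} gives, for every profile-loop $(\vec e;e')$ with $\vec e\in E_0^*$ and $e'\in E_0$,
\[
\mathcal M_0(\vec e;e')=\mathcal M(\vec e;e').
\]
So a 2-cell $\mathbf v\in\mathcal M$ lies in $\mathcal M_0$ if and only if all its input 1-cells and its output 1-cell lie in $E_0$, that is, iff $d(\mathbf v)\in E_0^*$ and $c(\mathbf v)\in E_0$. The proof then reduces to checking this membership criterion for the two factors.

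Let $\mathbf u\in\mathcal M(e_1,\dots,e_n;e_0)$ and $\mathbf u'\in\mathcal M(e_1',\dots,e_m';e_i)$, and suppose
\[
\mathbf u\circ_i\mathbf u'\in\mathcal M_0 .
\]
By the description of $\circ_i$ in \S\ref{s_fc_multicategory_unpack}, the composite lies in
\[
\mathcal M(e_1,\dots,e_{i-1},e_1',\dots,e_m',e_{i+1},\dots,e_n;e_0).
\]
Hence $e_0\in E_0$, $e_j\in E_0$ for all $j\ne i$, and $e_1',\dots,e_m'\in E_0$. The only 1-cell not yet known to lie in $E_0$ is the glued edge $e_i$, which is the output of $\mathbf u'$ and the $i$-th input of $\mathbf u$.

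This is where endpoint-closedness enters. Because $\mathscr M$ is vertically discrete, $(e_1',\dots,e_m';e_i)$ is a profile-loop in $E$, by the decomposition \eqref{M_decompose_profile_eq}. Its input string lies in $E_0^*$, so Definition~\ref{endpoint_closed_defn} gives $e_i\in E_0$.

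This step needs care in two places.
\begin{itemize}
\item First, Definition~\ref{endpoint_closed_defn} only speaks about $(\vec e;e')\in E^*\times E$ with $\vec e\in E_0^*$. Since $E_0$ is a subgraph, the composable string $(e_1',\dots,e_m')$ is automatically a path in $E_0$, so the hypothesis applies.
\item Second, the case $m=0$ must be handled. There $\vec e=\varnothing$, which vacuously lies in $E_0^*$, and endpoint-closedness is exactly what is needed to force the loop $e_i$ into $E_0$. I expect this degenerate case, the curved one of Figure~\ref{fig:2_cell_composition_curved}, to be the only real subtlety. I would check that the empty path at the vertex $s(e_i)$ belongs to $E_0^*$, which requires $s(e_i)\in V_0$. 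If that fails, I would interpret the empty word in $E_0^*$ as requiring its base vertex in $V_0$, or note that the definition's literal reading ($\varnothing\in E_0^*$) already suffices.
\end{itemize}

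With $e_i\in E_0$, all inputs and the output of $\mathbf u$ lie in $E_0$, and likewise for $\mathbf u'$. By the membership criterion, $\mathbf u\in\mathcal M_0$ and $\mathbf u'\in\mathcal M_0$, which is factor-closedness in the sense of Definition~\ref{fc_submul_defn}.
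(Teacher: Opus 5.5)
Your proof is the paper's proof. The only edge not already read off from $d(\mathbf u\circ_i\mathbf u')\in E_0^*$ and $c(\mathbf u\circ_i\mathbf u')\in E_0$ is the glued edge $e_i$, and endpoint-closedness applied to the profile-loop $(e_1',\dots,e_m';e_i)$ puts it in $E_0$.

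The $m=0$ check you leave open always succeeds: $s(e_i)$ equals $s(e_0)$ when $i=1$ and $t(e_{i-1})$ when $i\ge 2$, and both lie in $V_0$ because $e_0,e_{i-1}\in E_0$ and $E_0$ is a subgraph. So you should drop the fallback. Its first option would make endpoint-closedness inapplicable rather than applicable. The literal reading $\varnothing\in E_0^*$ would force every loop of $E$ into $E_0$, and then the subgraph in Proposition~\ref{factor_closed_m_N_prop} would not be endpoint-closed.
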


\begin{proof}
Let $\mathbf u\in\mathcal M(e_1,\dots,e_n;\,e_0)$ and
	$\mathbf u'\in\mathcal M(e_1',\dots,e_m';\,e_i)$ be such that $\mathbf u\circ_i\mathbf u'$ is defined and lies in $\mathcal M_0$.
Our goal is to show $\mathbf u$ and $\mathbf u'$ lie in $\mathcal M_0$ as well.
Applying the codomain and domain maps, we get $e_0=c(\mathbf u\circ_i\mathbf u')\in E_0$ and $d(\mathbf u\circ_i\mathbf u')=(e_1,\dots,e_{i-1},e_1',\dots,e_m',e_{i+1},\dots,e_n)\in E_0^*$.
	In particular, $(e_1',\dots,e_m')\in E_0^*$, so letting
	\[
	v:=s^*(e_1',\dots,e_m'),\qquad w:=t^*(e_1',\dots,e_m'),
	\]
	we have $v,w\in V_0$ and the set $E_0^*(v,w)$ of edges from $v$ to $w$ is nonempty.
Since $\mathbf u'\in\mathcal M(e_1',\dots,e_m';\,e_i)$ and $\mathbf u\circ_i \mathbf u'$ can be defined, the edge $e_i$ has endpoints $s(e_i)=v$ and $t(e_i)=w$.
By the endpoint-closedness condition for $E_0$, we have $e_i\in E_0$.
Therefore $d(\mathbf u)=(e_1,\dots,e_n)\in E_0^*$ and $c(\mathbf u)=e_0\in E_0$, hence $\mathbf u\in\mathcal M_0$.
	Also $d(\mathbf u')=(e_1',\dots,e_m')\in E_0^*$ and $c(\mathbf u')=e_i\in E_0$, hence $\mathbf u'\in\mathcal M_0$.
	This proves that $\mathscr M_0$ is factor-closed.
\end{proof}

	\subsection{Labeling by $\mathbf{fc}$-multicategories}
	\label{s_labeling_fc}
	
	Similar to the ideas in Section \ref{s_additional_grading}, we may introduce the labeling in the context of $\mathbf{fc}$-multicategories as well.

	\begin{defn}
		\label{label_by_fc_multicategory_defn}
		Let $\mathbb S$ be a fixed $\mathbf{fc}$-multicategory. A $\mathbf{fc}$-multicategory
		$
		\mathscr M=(V,E,\mathcal V,\mathcal M)
		$
		is called \textbf{\textit{$\mathbb S$-labeled}} if it is equipped with a map of $\mathbf{fc}$-multicategories
		$
		|\cdot|:\ \mathscr M \longrightarrow \mathbb S
		$
		whose components on $0$-cells, vertical $1$-cells, and horizontal $1$-cells are the identity maps.
	\end{defn}

In other words,
	$|\cdot|$ acts trivially on $V$, $\mathcal V$, and $E$, and only records the label of each $2$-cell of $\mathcal M$ in $\mathbb S$.
	Intuitively, by considering the fiber of the map $|\cdot|$, we can have finer decomposition for the 2-cells of $\mathcal M$.
	Indeed, the set $\mathcal M (\vec e; e')$ of 2-cells over $(\vec e, e') \in E^*\times E$ can be further decomposed into 
	\[ \mathcal M (\vec e; e' ; \beta)= \{\mathbf u \in \mathcal M(\vec e; e') \mid |\mathbf u|=\beta\}
	\]
	where $\beta$ runs over the corresponding set $\mathbb S(\vec e; e')$ of 2-cells in $\mathbb S$.
Remark that only a few $\mathcal M(\vec e; e'; \beta)$ are non-empty. We do not allow a map from a non-empty set to an empty set, so $\mathbb S(\vec e; e') = \varnothing$ implies $\mathcal M(\vec e; e') =\varnothing$.
However, it is possible that $\mathbb S(\vec e; e')\neq \varnothing$ when $\mathcal M(\vec e; e')=\varnothing$.

	\begin{ex}
		\label{ex_S_E_labeling}
		The above notion is a generalization of Definition \ref{S_labeled_multicategory_defn}.
		Let $(S,+,\theta)$ be a monoid, with identity element denoted by $\theta$.
		Let $E=(V,E,s,t)$ be a fixed directed graph.
		We can build a vertically discrete $\mathbf{fc}$-multicategory
		\[
		S_E \ := \  \left( V \ , \  E \ , \  \bigsqcup_{(\vec e; e') \ \text{profile-loop}} S  \right )
		\]
		whose set of $0$-cells is $V$, whose set of horizontal $1$-cells is $E$, and whose $2$-cells are uniformly labeled by $S$,
		in the sense that for each profile-loop $(\vec e; e') \in E^*\times E$ (Definition \ref{profile_loop_defn}), we set
		$
		S_E (\vec e; e'):=S,
		$
		and otherwise $S_E (\vec e; e'):=\varnothing$.
		For each horizontal $1$-cell $e\in E$, the corresponding identity $2$-cell $\id_e$ in $S_E(e;e)=S$ is defined to be the unit element of the monoid $S$, that is, $\id_e :=\theta \in S$. 
		The partial composition map 
		$
		\circ_i: S_E(e_1,\dots, e_n; e')\times S_E(f_1,\dots, f_m; e_i)
		\to 
		S_E(e_1,\dots, e_{i-1}, f_1,\dots, f_m , e_{i+1},\dots, e_n; e')
		$
		is defined by
		$\beta \circ_i \beta' := \beta+\beta'$.
		The associativity and unit axioms reduce to the associativity of $+$ and the fact that $\theta$ is the identity of $S$.
		When $E$ is the trivial directed graph, $S_E$ can be naturally identified with the (non-symmetric) operad $\{S(n)\}_{n\ge 0}$ given by
		$S(n):=S$ for each $n\ge 0$ with operadic unit given by $\theta\in S(1)=S$.
		When $E$ is a directed graph with a single vertex, $S_E$ can be identified with a multicategory whose set of objects is the edge set in an analogous manner.
		By Example~\ref{suspension_ex}, operads and multicategories arise as special cases of $\mathbf{fc}$-multicategories. In these two situations, the notion of an $S_E$-labeled $\mathbf{fc}$-multicategory reduces to the notion of an $S$-labeled operad/multicategory in Definition~\ref{S_labeled_multicategory_defn}.
	\end{ex}

\begin{ex}
\label{ex_S_E_labeling_reduced}
We also consider a variant of Example \ref{ex_S_E_labeling}.
Recall that $(\varnothing; e')$ is also regarded as a profile-loop (Definition \ref{profile_loop_defn}). Let's introduce
\[
S_E^{red} \ := \  \left( V \ , \  E \ , \  \bigsqcup_{(\vec e; e')  \ , \ \vec e\neq \varnothing} S  \sqcup \bigsqcup_{(\varnothing; e')} S\setminus \{\theta\}  \right )
\]
obtained by removing the identity elements $\theta$ in $S_E(\varnothing; e')$ for all $e'\in E$.
Since the composition maps $\circ_i$ will never produce outputs in $S_E(\varnothing; e')$, one can verify that $S_E^{red}$ is also a $\mathbf{fc}$-multicategory.
\end{ex}

\begin{ex}
		\label{ex_trivial_labeling}
		Every $\mathbf{fc}$-multicategory $\mathscr M=(V,E,\mathcal V,\mathcal M)$ is labeled by the $\mathbf{fc}$-multicategory obtained by collapsing each nonempty $\mathcal M(\vec e; e')$ to a point.
		Indeed, we define $\mathbb S$ so that, for each pair $(\vec e; e')\in E^*\times E$, the set $\mathbb S(\vec e; e')$ is a singleton whenever $\mathcal M(\vec e; e')$ is nonempty, and is empty whenever $\mathcal M(\vec e; e')$ is empty.
In particular, every \emph{vertically discrete} $\mathbf{fc}$-multicategory $\mathscr M$ is naturally labeled by $\pmb 0_E$ defined in Example \ref{ex_S_E_labeling}, where $S$ is the trivial monoid and $E=(V,E)$ is the underlying directed graph of 0-cells and horizontal 1-cells of $\mathscr M$.
Moreover, if $\mathcal M(\varnothing ; e')$ is always empty, then $\mathscr M$ is labeled by $\pmb 0_E^{red}$ (Example \ref{ex_S_E_labeling_reduced}).
	\end{ex}

	\section{Moduli spaces of pseudo-holomorphic polygons as $\mathbf{fc}$-multicategories}
	\label{s_moduli_fc_multicategories}
	
	Let's resume the context in Section \ref{s_moduli_disk}.
	It is more subtle if $\iota:L\to X$ is a Lagrangian \textit{immersion}; cf.~\cite{FuUnobstructed}. 
	In this case, a pseudo-holomorphic disk with boundary on $\iota(L)$ is more appropriately called \emph{a pseudo-holomorphic polygon}, since self-intersection points may become ``corners'' along the boundary. 
	Nevertheless, we occasionally use the two terms interchangeably if the context is clear.
	In this setting, we assume that $\iota$ has \textit{clean self-intersection}: namely, we require that the diagonal 
	\[
	\Delta_L=\{(p,p)\mid p\in L\}
	\]
	in the fiber product
	\[
	L \times_X L \;=\; \{(p,q)\in L\times L \mid \iota(p)=\iota(q)\}.
	\]
	is a union of connected components of $L\times_X L$; each other connected component of $L\times_X L$ is a smooth submanifold of $L\times L$; at every $(p,q)\in L\times_X L$, the tangent space satisfies
	\[
	T_{(p,q)}(L\times_X L)
	\;=\; \{(v,w)\in T_pL \times T_qL \mid d\iota_p(v)=d\iota_q(w)\}.
	\]

	\subsection{Pseudo-holomorphic polygons and motivations}
	The definition of the moduli spaces of pseudo-holomorphic stable disks needs to be modified as follows (cf. \cite[Definition 3.17]{FuUnobstructed}).
	Comparing Definition \ref{moduli_embed_defn}, a \textit{$J$-holomorphic stable polygon of type $k$} is defined as the data
	\begin{equation}
		\label{pseudo_holo_polygon_tuple}
		\mathbf u= (\Sigma, z_0,z_1,\dots, z_k,  u ,\gamma)
	\end{equation}
	where $\Sigma, z_j, u$ are defined in the same way except that the Lagrangian boundary condition is now $u(\partial\Sigma)\subset \iota(L)$, and where
	\begin{equation}
		\label{gamma_boundary_lift_eq}
		\gamma: \partial \Sigma \setminus \{z_0,z_1,\dots, z_k, \ \text{all boundary nodal points} \} \to L
	\end{equation}
	is an \textit{extra} continuous map such that $\iota\circ \gamma = u$. This condition in fact implies that $\gamma$ is \emph{relatively continuous} at each nodal point, in the following sense:  
	let $\sigma:(-\epsilon,\epsilon)\to \partial \Sigma$ be a continuous path respecting the induced boundary orientation on $\partial\Sigma$ with $\sigma(0)=0$.  
	If $\sigma$ meets two distinct irreducible components of $\Sigma$, then we require that $\gamma\circ \sigma:(-\epsilon,0)\cup (0,\epsilon) \to L$ continuously extends to $0$.
	In other words, the datum $\gamma$ is equivalent to specifying $k+1$ paths in $\partial\Sigma$ together with $k+1$ paths in $L$,  
	\begin{equation}
		\label{gamma_j_lift_eq}
		\sigma_j:(-\infty,+\infty)\to \partial\Sigma,\qquad 
		\gamma_j:(-\infty,+\infty)\to L,\qquad 0\le j\le k,
	\end{equation}
	such that each $\sigma_j$ respects the boundary orientation, connects the consecutive marked points with $\sigma_j(-\infty)=z_j$ and $\sigma_j(+\infty)=z_{j+1}$, and satisfies the compatibility condition
	$
	\iota \circ \gamma_j \;=\; u\circ \sigma_j
	$.
	
	The isomorphism between $\mathbf u$ and $\mathbf u'$ is still a biholomorphism $\phi:\Sigma\to \Sigma'$ except we further require $\gamma =\gamma'\circ \phi$.
	Similar to Definition \ref{moduli_embed_defn}, we define $\mathcal M (k)$ to be the set of isomorphism classes $[\mathbf u]$'s.
	However, its evaluation maps
	\[
	\ev_0,\ev_i \  (1\le i \le k): \mathcal M (k)\to L\times_X L 
	\]
	are then mapped into $L\times_X L$ (instead of $L$) and are modified to be
	\begin{equation}
		\label{evaluation_gamma_case_Eq}
		\begin{aligned}
			\ev_0 (\mathbf u) &= \big( \gamma(z_0+), \gamma(z_0-) \big) = \big(\gamma_0(-\infty), \gamma_k(+\infty) \big) \\
			\ev_i (\mathbf u) &= \big(\gamma(z_i-), \gamma(z_i+) \big)	 =  \big( \gamma_{i-1}(+\infty), \gamma_i(-\infty) \big) \qquad 1\le i\le k
		\end{aligned}
	\end{equation}
	where
	$\gamma(z_j\pm)=\lim_{{z\to z_j\pm , \ z\in\partial\Sigma}} \gamma(z)$ in $L$ and $z \to z_j{+}$ (resp.\ $z \to z_j{-}$) means that $z$ approaches $z_j$ along a path which reverses (resp.\ preserves) the induced boundary orientation on $\partial \Sigma$.
	See e.g. \cite[\S 3]{FuUnobstructed} for more details.
	Remark that compared to other $\ev_i$'s, the first and second components of $\ev_0$ are exchanged as $\ev_0$ stands for the ``output''.

	The monoid $S_L$ in (\ref{S_L_eq}) is still defined in the same way, but within $H_2(X, \iota(L))$. One may similarly define $\mathcal M(k,\beta)$ with $\beta\in S_L$.
	Following Fukaya's framework in \cite{FuUnobstructed}, and repeating the arguments of Proposition~\ref{multicategory_moduli_prop} almost verbatim with $L$ replaced by $L\times_X L$ and with the evaluation maps changed to the form in \eqref{evaluation_gamma_case_Eq}, we can similarly obtain:

	\begin{prop}
		\label{multicategory_moduli_immersed_prop}
		The moduli spaces $\mathcal M = \coprod_{\substack{k\ge 0}} \mathcal M(k)=\coprod_{\substack{k\ge 0, \beta\in S_L}} \mathcal M(k,\beta) $ and the evaluation maps
		\begin{equation}
			\label{diagram_eq}
			\xymatrix{
				& \mathcal M  \ar[dr]^{c=\ev_0}\ar[dl]_{d=(\ev_i)}& \\
				T(L\times_X L) & & L \times_X L
			}
		\end{equation}
		form an $S_L$-labeled multicategory on $L$. Here $T$ is the free monoid monad (\ref{list_monad_eq}). 
	\end{prop}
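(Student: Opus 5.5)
The plan is to follow the proof of Proposition~\ref{multicategory_moduli_prop} step by step, with the object of colours taken to be $L\times_X L$ rather than $L$. By Proposition~\ref{prop_coloured_operad}, it is enough to give four things: the labeling map, a unit $\iota: L\times_X L\to \mathcal M(1,\theta)$, and the partial compositions
\[
\circ_i:\ \mathcal M(k_1,\beta_1)\times_{L\times_X L,(\ev_i,\ev_0)}\mathcal M(k_2,\beta_2)\to \mathcal M(k_1+k_2-1,\beta_1+\beta_2),
\]
and then to check the axioms \eqref{operad_axiom_standard}, label additivity, and continuity. As before, the labeling is $|\mathbf u|=\beta$ for $\mathbf u\in\mathcal M(k,\beta)$.

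For the unit, I will set $\mathcal M(1,\theta):=L\times_X L$ by convention, with $\ev_0=\ev_1=\id$, in line with Definition~\ref{moduli_embed_defn}. Composition with this unit is declared to be the identity. With the convention that $\ev_0$ has its two components swapped, the matching condition $\ev_i=\ev_0$ is consistent with this choice.

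For the gluing, take $\mathbf u_1=(\Sigma_1,z_\bullet,u_1,\gamma^{(1)})$ and $\mathbf u_2=(\Sigma_2,w_\bullet,u_2,\gamma^{(2)})$ with $\ev_i(\mathbf u_1)=\ev_0(\mathbf u_2)$.
\begin{enumerate}
\item Form $\Sigma$ by identifying $z_i$ with $w_0$ to create a boundary node. Let $u$ be $u_1$ on $\Sigma_1$ and $u_2$ on $\Sigma_2$. This is well defined because $u_1(z_i)=\iota(\gamma^{(1)}(z_i\pm))=\iota(\gamma^{(2)}(w_0\mp))=u_2(w_0)$.
\item Relabel the marked points as in the embedded case.
\item Let $\gamma$ be $\gamma^{(1)}$ on $\Sigma_1$ and $\gamma^{(2)}$ on $\Sigma_2$.
\end{enumerate}

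The key step is to check that $\gamma$ is relatively continuous at the new node. Write $\ev_i(\mathbf u_1)=(\gamma^{(1)}(z_i-),\gamma^{(1)}(z_i+))$ and $\ev_0(\mathbf u_2)=(\gamma^{(2)}(w_0+),\gamma^{(2)}(w_0-))$. Following the oriented boundary of $\Sigma$ through the node, one arrives from $\gamma^{(1)}$ near $z_i-$, passes into $\Sigma_2$ at $w_0+$, runs along $\Sigma_2$, and leaves at $w_0-$ to continue along $\Sigma_1$ from $z_i+$. The matching equalities $\gamma^{(1)}(z_i-)=\gamma^{(2)}(w_0+)$ and $\gamma^{(1)}(z_i+)=\gamma^{(2)}(w_0-)$ say exactly that $\gamma$ extends continuously across both sides of the node. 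This is the point where the swap in $\ev_0$ in \eqref{evaluation_gamma_case_Eq} is used.

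The remaining verifications are as follows.
\begin{enumerate}
\item The class of $u$ is $\beta_1+\beta_2$ in $H_2(X,\iota(L))$, which gives label additivity.
\item Stability is preserved, since each component was already stable.
\item Isomorphism classes are respected: isomorphisms of $\mathbf u_1$ and $\mathbf u_2$ glue to an isomorphism of the glued objects, and they intertwine the $\gamma$'s.
\item The new evaluation maps agree with those inherited from $\mathbf u_1$ and $\mathbf u_2$, because the boundary arcs of $\Sigma$ not touching the node are unchanged. This gives the source law \eqref{T-multicategory_condition_2_eq}.
\item Associativity \eqref{operad_axiom_standard} holds because both sides give the same nodal surface with the same $(u,\gamma)$.
\end{enumerate}

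Topology is handled by citing Fukaya~\cite{FuUnobstructed}: each $\mathcal M(k,\beta)$ is compact Hausdorff and the $\ev_j$ are continuous. Continuity of $\circ_i$ then follows from the stable-map topology, in which gluing at a node is continuous, exactly as in the embedded case.

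The main obstacle I expect is the orientation and sign bookkeeping for the one-sided limits at the node, including the case $k_2=0$, where $\Sigma_2$ has only the output $w_0$. In that case I will check directly that $\gamma^{(2)}(w_0+)$ and $\gamma^{(2)}(w_0-)$ are the two ends of the single arc $\gamma^{(2)}_0$, so the same argument applies.
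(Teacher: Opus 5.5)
Your proposal is correct and takes the same route as the paper, which simply says to repeat the proof of Proposition~\ref{multicategory_moduli_prop} verbatim with $L$ replaced by $L\times_X L$ and with the evaluation maps of \eqref{evaluation_gamma_case_Eq}. You also supply the one check the paper leaves implicit, and you get the orientations right: the matching $\ev_i(\mathbf u_1)=\ev_0(\mathbf u_2)$ gives $\gamma^{(1)}(z_i-)=\gamma^{(2)}(w_0+)$ and $\gamma^{(1)}(z_i+)=\gamma^{(2)}(w_0-)$, which is exactly the relative continuity of the glued lift $\gamma$ at the new boundary node.
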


	However, this is not the whole picture: when $\iota:L\to X$ is an immersion (rather than an embedding), finer structure may emerge. 
	Observe that $L\times_X L$ naturally forms a directed graph $(L,\,L\times_X L;\,s,t)$ with vertex set $L$, edge set $L\times_X L$, and source/target maps \(s=\pr_1\), \(t=\pr_2\). This may remind us of the discussions in Section~\ref{s_fc_directedgraphs} on directed graphs and $\mathbf{fc}$-multicategories. It seems reasonable to extend this graph $(L,\,L\times_X L)$ to a vertically discrete $\mathbf{fc}$-multicategory by appropriately incorporating the moduli spaces $\mathcal M$. 
	A closer look will show that this is feasible, provided we take the 0-cells to be $\pi_0(L)$ rather than $L$: a pseudo-holomorphic polygon $\mathbf u$ in $\mathcal M$ carries boundary paths $\gamma_j\subset L$ between adjacent evaluation maps (cf. \eqref{gamma_j_lift_eq}), so for adjacent horizontal 1-cells we can only guarantee that their endpoints lie in the same path-connected component of $L$, not that they coincide.

\subsection{$\mathbf{fc}$-multicategory structure on moduli spaces }
	The domain $L$ of the immersion $i$ is not necessarily connected (cf.~\cite[Remark~3.3]{FuUnobstructed}).
	Each boundary path $\gamma_j$ is contained in a single connected component of $L$. 
	Accordingly, let's decompose $L$ into its connected components:
	\begin{equation}
		\label{connected_components_L_eq}
		L=\coprod_{v\in V_\iota} L_v \ , \qquad \ \ V_\iota:=\pi_0(L)
	\end{equation}
Define $E_\iota= L\times_X L$.
Denote by $\rho_\iota$ the canonical map $L\to V_\iota$ sending points in $L_v$ to $v$.
Define
\[
s_\iota, t_\iota : E_\iota \xlongrightarrow{\pr_1,\pr_2} L \xlongrightarrow{\rho_\iota} V_\iota 
\]
where $\pr_1,\pr_2$ are the natural projection maps to the first and second factors.
In this way, we obtain a directed graph
	\begin{equation}
		\label{E_L_graph_eq}
		E_\iota:=(V_\iota, E_\iota,s_\iota,t_\iota)
	\end{equation}
For every $v,v'\in V_\iota$, the edges from $v$ to $v'$ are $L_v\times_X L_{v'}$.
In the special case where$ \iota$ is a Lagrangian embedding and $L$ is connected, the vertex set consists of a single element, and the edge set $E_\iota = L\times_X L$ identifies canonically with $L$.

Denote by $\mathcal P_\iota$ the set of all profile-loops in $E_\iota=(V_\iota, E_\iota)$ as in Definition \ref{profile_loop_defn}.
For $e_0, e_1,\dots, e_n\in E_\iota$, we define $\mathcal M(e_1,\dots, e_n; e_0 ) $ to be the subset of $\mathcal M(n)$ consisting of pseudo-holomorphic polygons $\mathbf u$ with $\ev_0(\mathbf u)= e_0$ and $\ev_i(\mathbf u) = e_i$ for $i=1,\dots, n$.

\begin{lem}
		\label{M_pseudo_loop_lem}
If $\mathcal M(e_1,\dots, e_n; e_0)$ is non-empty, then $(e_1,\dots, e_n; e_0)\in \mathcal P_\iota$
	\end{lem}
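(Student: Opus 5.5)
Fix $\mathbf u=(\Sigma,z_0,\dots,z_n,u,\gamma)\in\mathcal M(e_1,\dots,e_n;e_0)$. By Definition~\ref{profile_loop_defn} we must verify two things: that $\vec e=(e_1,\dots,e_n)$ is a composable path in $E_\iota$, i.e.\ $t_\iota(e_i)=s_\iota(e_{i+1})$ for $1\le i\le n-1$, and that $s^*_\iota(\vec e)=s_\iota(e_0)$ and $t^*_\iota(\vec e)=t_\iota(e_0)$. For $n=0$ the second condition is replaced by $s_\iota(e_0)=t_\iota(e_0)$. The argument only uses the boundary lifts $\gamma_j$ from \eqref{gamma_j_lift_eq} and the formulas \eqref{evaluation_gamma_case_Eq}.

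The key observation is that each $\gamma_j:(-\infty,+\infty)\to L$ has path-connected image. I would first check that $\gamma_j$ is continuous, including across boundary nodes, using the relative continuity of $\gamma$ at nodal points. Its image therefore lies in a single component $L_{v_j}$, where $v_j\in V_\iota$. Since $L_{v_j}$ is closed in $L$, as a connected component, the one-sided limits $\gamma_j(\pm\infty)$ also lie in $L_{v_j}$. Hence $\rho_\iota(\gamma_j(-\infty))=\rho_\iota(\gamma_j(+\infty))=v_j$ for $0\le j\le n$, with indices following the convention of \eqref{gamma_j_lift_eq}, so that $\gamma_n$ runs from $z_n$ back to $z_0$.

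Then we read off the endpoints using $s_\iota=\rho_\iota\circ\pr_1$ and $t_\iota=\rho_\iota\circ\pr_2$. For $1\le i\le n$ we have $e_i=(\gamma_{i-1}(+\infty),\gamma_i(-\infty))$, so $s_\iota(e_i)=v_{i-1}$ and $t_\iota(e_i)=v_i$. This gives $t_\iota(e_i)=v_i=s_\iota(e_{i+1})$, so $\vec e\in E_\iota^*$. It also gives $s_\iota^*(\vec e)=v_0$ and $t_\iota^*(\vec e)=v_n$. For the output, $e_0=(\gamma_0(-\infty),\gamma_n(+\infty))$, so $s_\iota(e_0)=v_0$ and $t_\iota(e_0)=v_n$, which matches. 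When $n=0$ there is a single arc $\gamma_0$ from $z_0$ around to $z_0$, and $s_\iota(e_0)=t_\iota(e_0)=v_0$.

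The main point requiring care is the continuity of each $\gamma_j$ across boundary nodes. An arc between consecutive marked points may pass through several disk components of the nodal surface. Without relative continuity at the nodes, the arc could jump between components of $L$, and the argument would break. I would handle this by subdividing the arc at the nodes, applying the definition of relative continuity at each node, and concatenating the resulting paths. The remaining concern is bookkeeping: ensuring the orientation conventions in \eqref{evaluation_gamma_case_Eq}, in particular the swapped order in $\ev_0$, are consistent with the arrow directions in Definition~\ref{profile_loop_defn}.
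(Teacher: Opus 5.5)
Your proposal is correct and follows essentially the same route as the paper's proof: pick a polygon in $\mathcal M(e_1,\dots,e_n;e_0)$, place each boundary lift $\gamma_j$ in a single component $L_{v_j}$, and read off from the evaluation formulas that $e_i$ runs from $v_{i-1}$ to $v_i$ and $e_0$ from $v_0$ to $v_n$. Your additional care, namely continuity of $\gamma_j$ across boundary nodes, closedness of components for the endpoint limits, and the separate $n=0$ case, only makes explicit details that the paper takes for granted.
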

	
\begin{proof}
Assume it is non-empty, and we pick an element $[\mathbf u]=[\Sigma, z_0,\dots, z_n, u, \gamma]$ within it.
Recall that $\gamma$ can induce continuous paths $\gamma_j$ in $L$ for $0\le j\le n$ as in (\ref{gamma_j_lift_eq}).
Each $\gamma_j$ must be in a connected component of $L$, so by (\ref{connected_components_L_eq}) $\gamma_j\subset L_{  v_j}$ for some $  v_j\in V_\iota$.
By (\ref{evaluation_gamma_case_Eq}), we conclude that $\ev_0(\mathbf u) \in L_{  v_0} \times_X L_{  v_n}$ and $\ev_i(\mathbf u) \in L_{  v_{i-1}}\times_X L_{  v_i}$ for $i\neq 0$.
Namely, $e_0$ is an edge from $v_0$ to $v_n$, and for $i\neq 0$, the edge $e_i$ goes from $v_{i-1}$ to $v_i$.
Thus, $\vec e=(e_1,\dots, e_n) \in E_\iota^*$ by \eqref{E*_eq}, and $(\vec e; e_0)$ is a profile loop in $E_\iota$.
\end{proof}

It follows that we can write
	\begin{equation}
		\label{moduli_decompose_eq}
		\mathcal M=  \coprod_{(e_1,\dots, e_n; e_0)\in\mathcal P_\iota} \ \mathcal M(e_1,\dots, e_n; e_0)
	\end{equation}

\begin{thm}
		\label{fc_moduli_unlabeled_thm}
		Given the Lagrangian immersion \(\iota:L\to X\), we can canonically associate a vertically discrete \(\mathbf{fc}\)-multicategory
		\[
		\mathscr M_\iota =\big( \  \pi_0(L) \ , \ L\times_X L   \ , \ \mathcal M \ \big)
		\]
		where the set of \(0\)-cells is $V_\iota=\pi_0(L)$; the set of horizontal \(1\)-cells is $E_\iota=L \times_X L$;
		and \(2\)-cells are elements in the moduli spaces of pseudo-holomorphic polygons \(\mathcal M\).
	\end{thm}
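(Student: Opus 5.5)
The plan is to build the vertically discrete $\mathbf{fc}$-multicategory directly from the unpacked description in Section~\ref{s_fc_multicategory_unpack}. Since it is vertically discrete, the data reduce to four items: the directed graph $E_\iota=(V_\iota,E_\iota,s_\iota,t_\iota)$ from \eqref{E_L_graph_eq}, a decomposition of the $2$-cells over profile-loops, identity $2$-cells, and partial compositions $\circ_i$ satisfying the axioms \eqref{operad_axiom_standard}.

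\textbf{Step 1: the domain and codomain maps.} The vertex part is forced to be $\mathcal V=V_\iota$ with all vertical $1$-cells identities. On $2$-cells I set $d(\mathbf u)=(\ev_1(\mathbf u),\dots,\ev_n(\mathbf u))$ and $c(\mathbf u)=\ev_0(\mathbf u)$. Lemma~\ref{M_pseudo_loop_lem} guarantees that $d(\mathbf u)\in E_\iota^*$ and that $(d(\mathbf u);c(\mathbf u))\in\mathcal P_\iota$. Hence $\pmb d,\pmb c$ are maps of directed graphs, and $\mathcal M$ decomposes as in \eqref{moduli_decompose_eq}, which is the decomposition \eqref{M_decompose_profile_eq}.

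\textbf{Step 2: units.} For $e\in E_\iota$ I define $\id_e$. When $e\in\Delta_L$, say $e=(p,p)$, I take $\id_e$ to be the point $p\in\mathcal M(1,\theta)=L$ of Definition~\ref{moduli_embed_defn}, extended to the immersed setting. For $e$ off the diagonal, no genuine stable polygon serves as an identity. I would therefore artificially adjoin a formal unit: declare $\mathcal M(e;e)$ to contain a distinguished element $\id_e$ with $\ev_0=\ev_1=e$, and define composition with it to be the identity. This mirrors the convention $\mathcal M(1,\theta)=L$ in the embedded case, now indexed by $E_\iota$ rather than $L$. The unit laws then hold by fiat, and the boundary-path description of Lemma~\ref{M_pseudo_loop_lem} is trivially compatible.

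\textbf{Step 3: partial compositions.} Take $\mathbf u=[\Sigma_1,z_\bullet,u_1,\gamma^1]\in\mathcal M(e_1,\dots,e_n;e_0)$ and $\mathbf u'=[\Sigma_2,w_\bullet,u_2,\gamma^2]\in\mathcal M(e_1',\dots,e_m';e_i)$, neither a unit. I glue $z_i\in\Sigma_1$ to $w_0\in\Sigma_2$ to form a boundary node, set $u=u_1\cup u_2$, and reorder the marked points exactly as in the proof of Proposition~\ref{multicategory_moduli_prop}. The new ingredient is the lift $\gamma$, which I define as $\gamma^1$ on $\partial\Sigma_1$ and $\gamma^2$ on $\partial\Sigma_2$.

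The key check is that $\gamma$ is relatively continuous at the new node, in the sense following \eqref{gamma_boundary_lift_eq}. This is where the matching condition $\ev_i(\mathbf u)=e_i=\ev_0(\mathbf u')$ enters, together with the swapped-coordinate convention for $\ev_0$ in \eqref{evaluation_gamma_case_Eq}. The matching gives $\gamma^1(z_i-)=\gamma^2(w_0+)$ and $\gamma^1(z_i+)=\gamma^2(w_0-)$. An orientation-respecting boundary path through the node passes from the arc of $\Sigma_1$ ending at $z_i$ onto the arc of $\Sigma_2$ starting at $w_0$, and later from the arc of $\Sigma_2$ ending at $w_0$ back onto $\Sigma_1$. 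So these two equalities are exactly the required continuity conditions.

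I expect this orientation bookkeeping to be the main obstacle; getting it right is the reason $\ev_0$ has its components exchanged. Beyond that, I must verify three things:
\begin{itemize}
\item the evaluation maps of the composite are $(e_1,\dots,e_{i-1},e_1',\dots,e_m',e_{i+1},\dots,e_n;e_0)$;
\item stability is preserved, since each component of the composite is already stable;
\item the construction is well defined on isomorphism classes, because isomorphisms of the pieces glue to an isomorphism intertwining the $\gamma$'s.
\end{itemize}

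\textbf{Step 4: associativity.} Both sides of each identity in \eqref{operad_axiom_standard} produce the same nodal surface with the same map and lift, up to canonical isomorphism. Nested gluing and disjoint gluing at distinct nodes commute. Repackaging the $\circ_i$ into the full multiplication $\pmb\gamma$ then proceeds as in the proof of Proposition~\ref{prop_coloured_operad}.

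Canonicity holds because no choices are made beyond the moduli spaces themselves. If topology is wanted, continuity of the evaluation maps is imported from \cite{FuUnobstructed} as in Proposition~\ref{multicategory_moduli_prop}. The $S_L$-label is additive under gluing, which additionally gives an $(S_L)_{E_\iota}$-labeling in the sense of Example~\ref{ex_S_E_labeling}.
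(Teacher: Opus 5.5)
Your proposal is correct and follows the same route as the paper. The paper's proof takes $E_\iota$ as the underlying graph and sets the source and target of a $2$-cell to be $(\ev_1,\dots,\ev_n)$ and $\ev_0$, with Lemma~\ref{M_pseudo_loop_lem} supplying the profile-loop condition; it then defers units and composition to the boundary-node gluing of Proposition~\ref{multicategory_moduli_prop} with $L$ replaced by $L\times_X L$. Your Steps 2 and 3 spell out what the paper leaves implicit. The first is the artificial units $\mathcal M(1,\theta)\cong L\times_X L$, including the off-diagonal points. The second is the check that the matching $\ev_i(\mathbf u)=\ev_0(\mathbf u')$, with the swapped convention of \eqref{evaluation_gamma_case_Eq}, is exactly the relative continuity of the glued lift $\gamma$ at the new node.
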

	
	\begin{proof}
This is essentially achieved in the same way as the proof of Proposition \ref{multicategory_moduli_prop} but with some novel viewpoint for the notion of $\mathbf{fc}$-multicategories.
The underlying graph of 0-cells and horizontal 1-cells is exactly the above $E_\iota=(V_\iota, E_\iota)$.
As for the 2-cells, the source map sends $\mathbf u\in\mathcal M$ to $(\ev_1(\mathbf u),\dots,\ev_n(\mathbf u))\in E_\iota^*$, and the target map sends $\mathbf u$ to the horizontal 1-cell $\ev_0(\mathbf u)\in E_\iota$.
	\end{proof}

	\subsection{Second relative singular homology revisited}
	\label{s_variant_H_2(X,L)}

	Since operads and multicategories are special cases of $\mathbf{fc}$-multicategories, the additional labeling for a \textit{monoid} as in Section~\ref{s_additional_grading} should extend to the $\mathbf{fc}$-multicategory setting of Theorem~\ref{fc_moduli_unlabeled_thm}. For instance, let $(S,+,\theta)$ be a commutative monoid; we call a $\mathbf{fc}$-multicategory $C=(V,E,\mathcal V,\mathcal M)$ \emph{$S$-labeled} if there is a labeling map
	$|\cdot|\colon \mathcal M \to S$
	such that $|\iota(e)|=\theta$ for each $e\in E$ and
	$|\mathbf u\circ_i \mathbf u’| \;=\; |\mathbf u|+|\mathbf u’|$ for composable $2$-cells $\mathbf u,\mathbf u’$.
	In this context, if we imitate \eqref{S_L_eq} to define $S_L'=\{\theta\} \cup \{\beta\in H_2(X,\iota(L)) \mid \omega(\beta)>0\}
	$ with the zero element $\theta$ in $H_2(X,\iota(L))$, then one can readily show that the $\mathbf{fc}$-multicategories in Theorem~\ref{fc_moduli_unlabeled_thm} are actually $S_L'$-labeled.
	
	Nevertheless, this formulation is not entirely satisfactory. 
	Retaining only the relative homologous class $[\mathbf u]$ in $S_L\subset H_2(X,\iota(L))$ discards the finer boundary-path information $\{\gamma_j\}$.
	As noted in Section~\ref{s_additional_grading}, an additional labeling is useful for symplectic applications; in particular, a labeling structure finer than the monoid structure $S_L\subset H_2(X,\iota(L))$ may be useful.
	
%
%
	
	Accordingly, we would like to follow Definition \ref{label_by_fc_multicategory_defn} and study labeled $\mathbf{fc}$-multicategories.

	If $\iota:L\to X$ is an embedding, then the relative singular chain complex $C_\bullet (X,\iota(L))$ is quasi-isomorphic to the mapping cone $\mathrm{Cone}_\bullet(\iota)$ of $\iota_*:C_\bullet(L)\to C_\bullet(X)$, where
	$
	\mathrm{Cone}_k(\iota) = C_k (X) \oplus C_{k-1}(L) 
	$ with the differential given by $(u, \gamma)\mapsto (\partial u - \iota_*(\gamma) \ , \ \partial \gamma )$.
	However, if $\iota : L\to X$ is not an embedding, then the quasi-isomorphism does not hold in general; so, this discussion may suggest that merely considering $H_2(X,\iota(L))$ could lose information.
	Indeed, let's consider
	\[
	\mathrm{Cone}_2(\iota) = C_2 (X) \oplus C_{1}(L) =  C_2(X)
	\oplus \bigoplus_{v\in V_L} C_1(L_v) \]
	by (\ref{connected_components_L_eq}).
	Then, given a pseudo-holomorphic polygon $\mathbf u$, instead of recording it in the degree-two relative singular chain group $C_2(X,\iota(L))$, it seems preferable to encode it in the degree-two mapping-cone group $\mathrm{Cone}_2(\iota)$, whose $C_1(L_v)$ components leave room to encode the aforementioned boundary paths $\gamma_j$ associated to $\mathbf u$.
	Developing further these ideas, we propose the following.
	

	Suppose $\iota: L\to X$ is a Lagrangian immersion as before.
	We construct a vertically discrete $\mathbf{fc}$-multicategory, denoted by
	\begin{equation}
		\label{S_iota_eq}
		\mathbb S_\iota = \mathbb S(X, \iota) = (\pi_0(L), L\times_X L, \mathbb S_\iota)
	\end{equation}
	as follows:

	\paragraph{\underline{0-cells and horizontal 1-cells}:}
	The set of 0-cells is defined to be $V_\iota=\pi_0(L)$.
	The set of horizontal 1-cells is defined to be the set of points in $E_\iota=L\times_X L $.
	In other words, the underlying directed graph is $E_\iota (V_\iota, E_\iota, s_\iota, t_\iota)$ in (\ref{E_L_graph_eq}).

	\paragraph{\underline{General case for 2-cells}:}
	Let $(e_1,\dots, e_n; e_0)$ be a profile-loop in $E_\iota$.
	We assume that $e_i$ is an edge from $v_{i-1}$ to $v_i$ for $i=1,\dots, n$ and $e_0$ is an edge from $v_0$ to $v_n$ (cf. Figure \ref{fig:2_cell_composition}). 
	Denote the first and second projection maps $L \times_X L \to L$ by $s=\pr_1$ and $t=\pr_2$ respectively, and observe that 
	$
	\jmath:= s\circ  \iota = t\circ \iota: L\times_X L \to L
	$.

Fix horizontal 1-cells $q_i  \in L \times_X L$ for $i=0,1\dots, n$.
Denote by $Z_\iota(q_1,\dots, q_n; q_0)$ the subset of
	\[
	C_2(X)   \oplus \bigoplus_{i=0}^n C_1(L_{v_i})
	\]
	consisting of 
	\[\left( u  \ , \  (\gamma_i)_{i=0}^n 
	\right) \]
	subject to
	\begin{equation}
		\begin{aligned}
			&	\partial u   =  \sum_{i=0}^n \iota_* \gamma_i    \\
			& \begin{cases}
				s_* q_i = \partial^+ \gamma_{i-1}   \\ 
				t_* q_i = \partial^- \gamma_{i} 
			\end{cases}  \qquad  i=1,2, \dots, n \\
			&
			\begin{cases}
				s_*	q_0  =   \partial^- \gamma_0 \\
				t_*	q_0 =  \partial^+ \gamma_n
			\end{cases}
		\end{aligned}
	\end{equation}
	and the semi-positive condition\footnote{ \scriptsize  This reflects the fact that a pseudo-holomorphic curve $u$ always has positive symplectic energy, $\int_u \omega >0$, unless $u$ is constant.}:
	\begin{equation}
		\label{energy_S_iota}
		\text{either} \ \ u=0 \quad \text{or} \quad  \textstyle \int_u \omega >0 
	\end{equation}
	Here we recall that the boundary operator is the alternating sum of face maps
	$
	\partial \;=\;\sum_{i=0}^{n}(-1)^i\, \partial_i
	$
	where for $i=0,\dots,n$ we let $\delta_i:\Delta^{n-1}\hookrightarrow\Delta^n$ denote the $i$-th face inclusion
	(omit the $i$-th vertex) and define the $i$-th face map
	$
	\partial_i$ by sending a singular $n$-chain $\sigma$ to the induced $(n-1)$-chain $\sigma\circ \delta_i$ and extending it linearly.
	In degree one this reads and is denoted by
	$\partial=\partial_0-\partial_1=:\partial^+ -\partial^-$

	Two such elements $( u   , (\gamma_i))$ and $(\tilde u , (\tilde\gamma_i) )$ are called \textit{$\iota$-equivalent}, written 
	\[( u   , (\gamma_i)) \sim_\iota  (\tilde u , (\tilde\gamma_i) ),
	\]
	if there exists
	\begin{align*}
		\left( U  , (\Gamma_i)  \right)
		\ \ \in \ \
		C_3(X) \oplus
		\bigoplus_{i=0}^n C_2(L_{v_i})
	\end{align*}
	such that
	\begin{align*}
		& u-\tilde u = - \partial U + \sum_{i=0}^n \iota_* \Gamma_i    \\
		&
		\gamma_i -\tilde \gamma_i = -\partial \Gamma_i  \qquad  &&i=0,1,2,\dots, n
	\end{align*}
	Now, we define the corresponding set of 2-cells to be the set of $\iota$-equivalence classes, that is,
	\[
	\mathbb S_\iota(q_1,\dots, q_n; q_0) = Z_\iota(q_1,\dots, q_n; q_0 ) / \sim_\iota
	\]
	The $\iota$-equivalence class of an element $(u, (\gamma_i))$ is denoted by $[u, (\gamma_i)]$.

	\paragraph{\underline{Partial composition}:}
	Given 
	\[
	\mathbf u=[u, (\gamma_0,\dots, \gamma_n)] \in Z_\iota(q_1,\dots, q_i,\dots , q_n; q_0)\]
	and 
	\[\mathbf u'=[u', (\gamma'_0,\dots, \gamma_m')] \in Z_\iota(q_1',\dots, q_m'; q_i)
	\]
	for a fixed $1\le i\le n$, we define their partial composition $\mathbf u \circ_i \mathbf u'$ to be the $\iota$-equivalence class of the element
	\[
	\big(u+u',  (\gamma_0,\dots, \gamma_{i-2}, \gamma_{i-1}+\gamma_0', \gamma_1',\dots, \gamma'_{m-1}, \gamma'_m+\gamma_i, \gamma_{i+1},\dots, \gamma_n) \big)
	\]
	in
	\[
	Z_\iota ( q_1,\dots, q_{i-1}, q'_1,\dots, q'_m, q_{i+1},\dots, q_n; q_0)
	\]
	
	This completes the construction of $\mathbb S_\iota$ in (\ref{S_iota_eq}).
	
	%
	%
	%

	\paragraph{\underline{Recovery of the second relative homology}:}
	If $\iota: L\to X$ is an \textit{embedding} and $L\cong \iota(L)$ is connected, then $\mathbb S_\iota$ recovers $S_L\subset H_2(X,L)$ in \eqref{S_L_eq}.
	Indeed, the associated graph $E_\iota=(V_\iota,E_\iota)$ simply consists of one vertex $v$ and one loop $e$.
	Since $\iota$ is an embedding, we have $L_v\cong L$, $R(e)=\Delta_L\cong L$, and the maps $s,t: R(e)\to L_v$ are identified with the identity maps $L\to L$.
	Observe that by (\ref{energy_S_iota}), there is a natural well-defined map
	\[
	\mathbb S_\iota (q_1,\dots, q_n; q_0) \to S_L\subset H_2(X,L)  \ , \quad [u, (\gamma_i)_{i=0}^n] \mapsto [u]
	\]

	For $n=0$, an element of $Z_\iota(\varnothing;q_0)$ is a pair $(u,\gamma_0)\in C_2(X)\oplus C_1(L)$ satisfying
	$
	\partial u= \gamma_0$, $\partial^-\gamma_0=q_0$, and $\partial^+\gamma_0=q_0$. Hence, $\partial\gamma_0=\partial^+\gamma_0-\partial^-\gamma_0=0$.
	In particular, the class $[u]\in C_2(X)/C_2(L)$ is a relative $2$-cycle.
	If $(u,\gamma_0)\sim_\iota(\tilde u,\tilde\gamma_0)$, then by definition there exists
	$(U,\Gamma_0)\in C_3(X)\oplus C_2(L)$ such that
	$
	u-\tilde u=-\partial U+ \Gamma_0$ and $ \gamma_0-\tilde\gamma_0=-\partial\Gamma_0$.
	Passing to the quotient $C_2(X)/C_2(L)$ gives
	$
	[u]-[\tilde u]=-[\partial U]=-\partial[U]$,
	so $[u]$ and $[\tilde u]$ give the same class in $H_2(X,L)$.
	Conversely, let $\alpha\in H_2(X,L)$ and pick a representative $u\in C_2(X)$ with $\partial u\in C_1(L)$.
	Since $L$ is connected, the class $[\gamma]\in H_1(L)$ admits a representative by a loop based at $q_0$,
	i.e.\ a $1$-chain $\gamma_0$ with $\partial^\pm\gamma_0=q_0$ and $[\gamma_0]=[\gamma]\in H_1(L)$.
	Then $\gamma-\gamma_0=\partial\Gamma_0$ for some $\Gamma_0\in C_2(L)$, and setting
	$
	u':=u-\iota_*\Gamma_0
	$
	gives $\partial u'=\iota_*\gamma_0$.
	Hence, $(u',\gamma_0)\in Z_\iota(\varnothing;q_0)$ represents $\alpha$.
	To sum up, the map
	\begin{equation}
		\label{S_iota_q_0_S_L_eq}
		\mathbb S_\iota(\varnothing;q_0) \xrightarrow{\cong}  S_L
	\end{equation}
	given by $ [u,\gamma_0] \mapsto [u]$ is an isomorphism.
	However, for $n\ge 1$, the datum of $(\gamma_0,\dots,\gamma_n)$ records a decomposition of the boundary $\partial u$ into
	segments with specified corners, and hence contains more information than the resulting relative class in $H_2(X,L)$.
	Keeping the total boundary $\sum_{i=0}^n \gamma_i$ fixed, one can modify the tuple by adding $1$-cycles in $L$; modulo
	$\iota$-equivalence this ambiguity is governed by $H_1(L)$.
	So, $\mathbb S_\iota(q_1,\dots,q_n;q_0)$ refines $H_2(X,L)$ rather than coinciding with it.

	\paragraph{\underline{Hands-on situations}}

Recall that $L=\coprod_{v\in V_\iota} L_v$ with $V_\iota=\pi_0(L)$. Assume that, for each $v\in V_\iota$, the restriction
	$\iota_v:=\iota|_{L_v}:L_v\to X$
	is an embedding. In other words, $\iota$ is an immersion obtained by taking the disjoint union of embedded submanifolds $L_v\cong \iota(L_v)\subset X$ and allowing their images to intersect each other in $X$. Namely, $\iota$ has no self-intersections within a single component $L_v$, and all self-intersections of $\iota$ arise from intersections between distinct components.
	In particular, the corresponding graph $E_\iota=(V_\iota, E_\iota)$ has no loops except the distinguished loops (\ref{E_L_graph_eq}).
	This is a typical setup in Lagrangian Floer theory and in the study of Fukaya categories.
	
	The previous discussions around (\ref{S_iota_q_0_S_L_eq}) and the partial composition in $\mathbb S_\iota$ immediately imply the following (cf. Figure \ref{fig:2_cell_composition_curved}): 
	
	\begin{prop}
		Let $\mathbb S_\iota(q_1,\dots, q_n; q_0)$ be the set of 2-cells with fixed source and target horizontal 1-cells $(q_1,\dots, q_n)$ and $q_0$. 
		Assume that, for some $1\le i\le n$, the point $q_i$ lies in $R(e_v)$, where $e_v$ is the distinguished loop at a vertex $v\in V_\iota$, and that $\iota|_{L_v}$ is an embedding. Then there is a natural action
		\[
		S_{L_v} \times \mathbb S_\iota (q_1,\dots, q_n; q_0) \to \mathbb S_\iota (q_1,\dots, q_{i-1}, q_{i+1},\dots, q_n; q_0) 
		\]
		where $S_{L_v}\subset H_2(X,L_v)$ is defined as in (\ref{S_L_eq}).
	\end{prop}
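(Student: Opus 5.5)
The plan is to build the action as partial composition $\circ_i$ in $\mathbb S_\iota$ with a $2$-cell that has empty input, as in Figure~\ref{fig:2_cell_composition_curved}.

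Since $q_i\in R(e_v)$ lies on the distinguished loop at $v$, we have $s(q_i)=t(q_i)=:p\in L_v$. Hence $(\varnothing;q_i)$ is a profile-loop. Here $v_{i-1}=v_i=v$, because $q_i$ goes from $v_{i-1}$ to $v_i$. So the composition
\[
\circ_i:\ \mathbb S_\iota(q_1,\dots,q_n;q_0)\times \mathbb S_\iota(\varnothing;q_i)\to \mathbb S_\iota(q_1,\dots,q_{i-1},q_{i+1},\dots,q_n;q_0)
\]
is defined, using the $m=0$ case of the partial composition formula. In that case the two middle entries $\gamma_{i-1}+\gamma'_0$ and $\gamma'_m+\gamma_i$ collapse into the single chain $\gamma_{i-1}+\gamma'_0+\gamma_i\in C_1(L_v)$.

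The first step is to check this formula directly. Let $(u,(\gamma_j))\in Z_\iota(q_1,\dots,q_n;q_0)$ and $(u',\gamma'_0)\in Z_\iota(\varnothing;q_i)$. Then $\partial(u+u')=\sum_j\iota_*\gamma_j+\iota_*\gamma'_0$, as required. The endpoint conditions for the merged chain hold because the internal endpoints at $p$ telescope: $\partial^+\gamma_{i-1}=p=\partial^-\gamma_i$, and $\gamma'_0$ is a chain with $\partial^\pm\gamma'_0=p$. Strictly, the individual $\partial^\pm$ of a sum of chains are only meaningful in the paper's formal sense, so one should interpret the conditions through $\partial=\partial^+-\partial^-$ and the specified corner points. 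I expect this bookkeeping to be the main obstacle, and I will handle it by working with the boundary identity $\partial\gamma=\partial^+\gamma-\partial^-\gamma$ together with the recorded endpoints. Positivity \eqref{energy_S_iota} holds since $\int(u+u')\omega=\int u\,\omega+\int u'\omega$ and both terms are $\ge 0$; the sum can vanish only if both chains are of the allowed zero type, so the element stays in $Z_\iota$. Compatibility with $\sim_\iota$ follows by adding the bounding data $(U+U',\dots,\Gamma_{i-1}+\Gamma'_0+\Gamma_i,\dots)$.

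The second step is to identify $\mathbb S_\iota(\varnothing;q_i)$ with $S_{L_v}$. I will invoke the isomorphism \eqref{S_iota_q_0_S_L_eq}, applied to the embedding $\iota|_{L_v}$ and the connected manifold $L_v$; this is exactly where the hypothesis that $\iota|_{L_v}$ is an embedding is used. A class $\beta\in S_{L_v}$ is represented by some $(u',\gamma'_0)$ with $\gamma'_0$ a loop at $p$ in $L_v$, via the argument given before \eqref{S_iota_q_0_S_L_eq}, and the positivity of $\omega(\beta)$ matches \eqref{energy_S_iota}. Since the identification is a bijection on classes, and $\circ_i$ descends to classes by the first step, the resulting action is independent of the chosen representatives.

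Naturality then follows from the construction. The action is compatible with addition in $S_{L_v}$ through associativity of partial composition, and $\theta$ acts by deleting the $i$-th slot, merging $\gamma_{i-1}$ and $\gamma_i$ into one chain.
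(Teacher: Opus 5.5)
Your proposal follows the paper's own argument: the action is the partial composition $\circ_i$ with an empty-input $2$-cell (cf.\ Figure~\ref{fig:2_cell_composition_curved}), transported along the identification $\mathbb S_\iota(\varnothing;q_i)\cong S_{L_v}$ of \eqref{S_iota_q_0_S_L_eq} applied to the embedded component $L_v$, and your explicit checks supply details the paper calls immediate (the $m=0$ merge $\gamma_{i-1}+\gamma'_0+\gamma_i$, positivity, compatibility with $\sim_\iota$, and reading the endpoint conditions only through $\partial=\partial^+-\partial^-$). The one thing to keep in mind is that independence of the chosen representative rests on injectivity in \eqref{S_iota_q_0_S_L_eq}, which the paper asserts without argument; it holds once the $\iota$-equivalence is taken with $\gamma_i-\tilde\gamma_i=\partial\Gamma_i$, the sign consistent with $\partial u=\sum_i\iota_*\gamma_i$.
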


	Finally, one can improve Theorem~\ref{fc_moduli_unlabeled_thm} as follows:

	\begin{prop}
		\label{fc_moduli_labeled_prop}
		The $\mathbf{fc}$-multicategory $\mathscr M_\iota$
		is $\mathbb S_\iota$-labeled in the sense of Definition \ref{label_by_fc_multicategory_defn}.
	\end{prop}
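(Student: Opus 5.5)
We need to construct a map of $\mathbf{fc}$-multicategories $|\cdot|:\mathscr M_\iota\to\mathbb S_\iota$ that is the identity on $V_\iota=\pi_0(L)$ and on $E_\iota=L\times_X L$. Since both sides are vertically discrete and share the underlying graph $E_\iota$, Definition~\ref{map_fc_defn} reduces to four checks: a fibrewise map $\mathcal M(q_1,\dots,q_n;q_0)\to\mathbb S_\iota(q_1,\dots,q_n;q_0)$ for every profile-loop (which by Lemma~\ref{M_pseudo_loop_lem} are the only nonempty fibres), compatibility with $\pmb d,\pmb c$, preservation of identity 2-cells, and preservation of the partial compositions $\circ_i$.

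\textbf{Step 1 (the labeling map).} Given $[\mathbf u]=[\Sigma,z_0,\dots,z_n,u,\gamma]\in\mathcal M(q_1,\dots,q_n;q_0)$, choose a triangulation of the nodal disk $\Sigma$ and push it forward by $u$ to a singular $2$-chain $u_\#\in C_2(X)$. The boundary arcs $\sigma_j$ from $z_j$ to $z_{j+1}$ of \eqref{gamma_j_lift_eq} then give $1$-chains $\gamma_{j\#}\in C_1(L_{v_j})$, with $v_j$ as in the proof of Lemma~\ref{M_pseudo_loop_lem}. We orient the triangulation so that $\partial u_\#=\sum_j\iota_*\gamma_{j\#}$. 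This is possible because $\iota\circ\gamma_j=u\circ\sigma_j$, and boundary nodes and interior nodes contribute no boundary. The endpoint conditions follow from \eqref{evaluation_gamma_case_Eq}: $\partial^+\gamma_{i-1}=\gamma_{i-1}(+\infty)=s_*q_i$, and so on. The energy condition \eqref{energy_S_iota} holds because $\int u_\#\omega$ is the symplectic energy of $u$, which is positive unless $u$ is constant. In the constant case we replace $u_\#$ by $0$; this changes it by a degenerate chain, which is harmless after the choice-independence argument of Step 2. We then set $|\mathbf u|:=[u_\#,(\gamma_{j\#})]$.

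\textbf{Step 2 (well-definedness, the main obstacle).} We must show the class is independent of the triangulation, of the parametrizations of the $\sigma_j$, and of the representative $\mathbf u$ up to isomorphism. An isomorphism $\phi$ simply transports the triangulation, so the only real issue is comparing two triangulations and parametrizations. We will use the standard fact that any two such choices are related by a subdivision-type chain homotopy. More precisely, pick a common refinement, or a triangulation of $\Sigma\times[0,1]$ restricting to the two given ones, and push it forward by $u\circ\pr_1$. This yields $U\in C_3(X)$ and $\Gamma_j\in C_2(L_{v_j})$, coming from $\sigma_j\times[0,1]$ and mapped through $\gamma_j\circ\pr_1$. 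Stokes' formula on the prism gives exactly $u-\tilde u=-\partial U+\sum\iota_*\Gamma_j$ and $\gamma_j-\tilde\gamma_j=-\partial\Gamma_j$, up to the sign convention, which we absorb by reversing the orientation of $[0,1]$. The endpoint vertical edges of the prism are degenerate because marked points are fixed. These are also the delicate points: the limits $\gamma(z_j\pm)$ exist, so degenerate edges contribute consistently. This is precisely the relation $\sim_\iota$.

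\textbf{Step 3 (compatibilities).} The map preserves $\pmb d$ and $\pmb c$ by construction. The identity 2-cell $\iota(q)\in\mathcal M(1,\theta)$ is sent to $[0,(c_{s(q)},c_{t(q)})]$, the class of constant paths, which is the identity of $\mathbb S_\iota$ under the composition rule; this uses that constant $1$-chains are boundaries of degenerate $2$-chains. For $\circ_i$, the glued surface of $\mathbf u\circ_i\mathbf u'$ (as in the proof of Proposition~\ref{multicategory_moduli_prop}, now gluing at $z_i$ and $w_0$) can be triangulated by the union of the two triangulations, so its $2$-chain is $u_\#+u'_\#$. Its boundary arcs are the arcs of $\mathbf u$ and $\mathbf u'$, except that the arc from $z_{i-1}$ now continues through the node into the arc of $\mathbf u'$ starting at $w_0$, and similarly at the other end. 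Their chains are $\gamma_{i-1}+\gamma'_0$ and $\gamma'_m+\gamma_i$, which are related to the concatenated paths by the standard subdivision homotopy as in Step 2. This matches the composition formula defining $\mathbb S_\iota$ on the nose modulo $\sim_\iota$, so $|\mathbf u\circ_i\mathbf u'|=|\mathbf u|\circ_i|\mathbf u'|$, completing the proof.
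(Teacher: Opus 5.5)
Your map is the paper's map. The paper's proof only assigns to $\mathbf u$ the class of $(u,(\gamma_j))$, with $u$ read as a singular $2$-chain and the $\gamma_j$ as singular $1$-chains, and asserts that this is a map of $\mathbf{fc}$-multicategories. Your Steps 2 and 3 are therefore checks the paper never carries out, and several of them fail as stated for the chain-level definition of $\mathbb S_\iota$ (integer coefficients, which its signs presuppose).

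\textbf{The unit step is false.} Put $x=\iota(s(q))=\iota(t(q))$ and let $c_x$ be the constant $1$-simplex. Then $\iota_*c_{s(q)}+\iota_*c_{t(q)}=2c_x\neq 0=\partial 0$, so $[0,(c_{s(q)},c_{t(q)})]$ is not even in $Z_\iota(q;q)$. Composing it with any $2$-cell gives a tuple whose $\partial u$ and $\sum_j\iota_*\gamma_j$ differ by $2c_x$. Your remark that constant $1$-chains bound degenerate $2$-chains does not help: $c^{(2)}_{s(q)}$ and $c^{(2)}_{t(q)}$ push forward to the same simplex $c^{(2)}_x$ with the same sign, so they add rather than cancel.

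\textbf{Constant polygons break the same way.} They do occur, since constant maps with $n\ge 2$ inputs are stable. For a constant triangle, a one-simplex representative has $u_\#=c^{(2)}_x$ and $\sum_j\iota_*\gamma_{j\#}=c_x$. Replacing $u_\#$ by $0$ therefore leaves $Z_\iota$, and Step 2 only compares elements of $Z_\iota$, so it cannot make this harmless.

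\textbf{The sign in Step 2 cannot be absorbed by reversing $[0,1]$.} Reversing the orientation negates $U$ and all $\Gamma_j$ simultaneously, so it cannot change their relative sign. Apply $\partial$ to $u-\tilde u=-\partial U+\sum_j\iota_*\Gamma_j$ and use $\partial u=\sum_j\iota_*\gamma_j$, $\partial\tilde u=\sum_j\iota_*\tilde\gamma_j$ and $\gamma_j-\tilde\gamma_j=-\partial\Gamma_j$. This forces $\sum_j\iota_*\gamma_j=\sum_j\iota_*\tilde\gamma_j$. So $\sim_\iota$ as literally written does not in general identify two triangulations that subdivide $\partial\Sigma$ differently. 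Your prism actually produces $\gamma_j-\tilde\gamma_j=+\partial\Gamma_j$ alongside the first equation, which is the sign-consistent mapping-cone relation.

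\textbf{The endpoint conditions need reinterpretation too.} For a subdivided arc, $\partial^+\gamma_{j\#}$ is a sum of several points. The conditions $\partial^\pm\gamma_j=\dots$ only make sense read as conditions on $\partial\gamma_j$; the paper's own composite $\gamma_{i-1}+\gamma'_0$ already needs this reading.

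\textbf{What repairs it.} Your argument goes through once $\mathbb S_\iota$ is adjusted:
use normalized singular chains, with degenerate simplices set to zero; impose the endpoint conditions through $\partial\gamma_j$; and use the sign-consistent cone relation. Then constant simplices vanish, and the unit of $\mathbb S_\iota$ is $[0,(0,0)]$, which is what your $[0,(c_{s(q)},c_{t(q)})]$ becomes. Constant polygons are labeled by $[0,(0,\dots,0)]$. With these changes your Steps 1--3 become a correct and considerably more complete version of the paper's one-line argument.
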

	
	\begin{proof}
		Note that both $\mathscr M_\iota$ and $\mathbb S_\iota$ are vertically discrete and have the same 0-cells and 1-cells.
		So, it suffices to define the map for the 2-cells.
		Recall that a 2-cell of $\mathscr M$ over $(e_1,\dots, e_n; e')\in E^*\times E$ is (the isomorphism class of) a pseudo-holomorphic polygon $\mathbf u= (\Sigma, z_0,z_1,\dots, z_k,  u ,\gamma)$ as (\ref{pseudo_holo_polygon_tuple}).
		As in (\ref{gamma_j_lift_eq}), the datum $\gamma$ consists of $k+1$ paths $\gamma_j$ in $L$, giving rise to singular 1-chains in $C_1(L_{v_i})$, still denoted by $\gamma_j$'s.
		Besides, forgetting the pseudo-holomorphic condition, the map $u: (\Sigma, \partial \Sigma)\to (X, \iota(L))$ is first a continuous map and thus defines a singular 2-chain in $C_2(X)$, still denoted by $u$.
		Then, $(u,(\gamma_j))$ induces a 2-cell in $\mathbb S_\iota$. In this way, one can find a map $\mathscr M_\iota \to\mathbb S_\iota$ of $\mathbf{fc}$-multicategories.
	\end{proof}

\subsection{Gromov compactness and factor-closedness of $\mathbf{fc}$-multicategory structure}
\label{s_Gromov_factor_closed}
Let $\iota: L\to X$ be a Lagrangian immersion with clean self-intersection as before. We have constructed a natural $\mathbf{fc}$-multicategory $\mathscr M_\iota=(\pi_0(L), L\times_X L, \mathcal M)$ of moduli spaces in Theorem~\ref{fc_moduli_unlabeled_thm}, and showed in Proposition~\ref{fc_moduli_labeled_prop} that it admits an $\mathbb S_\iota$-labeling in the sense of Definition \ref{label_by_fc_multicategory_defn}, where $\mathbb S_\iota=(\pi_0(L), L \times_X L, \mathbb S_\iota)$ is the $\mathbf{fc}$-multicategory defined as in \eqref{S_iota_eq}.
The underlying directed graph is $E_\iota=(\pi_0(L), L \times_X L)$ in (\ref{E_L_graph_eq}).

In symplectic geometry, one can often single out an appropriate subcollection of the relevant moduli spaces in $ \mathscr M_\iota$ which is \textit{closed under Gromov limits}, meaning that if a sequence of pseudo-holomorphic curves $\mathbf u_n$ lies in this subcollection, then any Gromov limit $\mathbf u_\infty$ (after passing to a subsequence) is represented by an element of the same subcollection. When such a subcollection is available, the usual virtual-counting and algebraic constructions can be carried out internally, producing refined or relative invariants.
Our point is that the factor-closedness condition is an operadic reflection of this closure under Gromov limits.

In practice, Gromov compactness often forces one to work with finitely many Lagrangians at a time, and one faces technical limitations in treating infinitely many Lagrangian submanifolds simultaneously. To define a ``full Fukaya category'' one should take an exhausting increasing sequence of finite collections of Lagrangians and form an appropriate homotopy inductive limit. See Fukaya's remark in \cite[\S 18.1]{FuUnobstructed}.
From this perspective, we believe that it is useful to employ the language of $\mathbf{fc}$-multicategories as it packages a range of seemingly different $A_\infty$-type structures into a single, uniform operadic description.

Finally, let's give a few concrete examples encountered in symplectic geometry:

\medskip 

Let $\{L_0, L_1,\dots, L_N\}$ be a finite collection of connected embedded Lagrangian submanifolds, where $N\in\mathbb Z_+$ or $N=\infty$, and assume that the associated Lagrangian immersion
\[
\textstyle \iota_N: L\equiv  \bigsqcup_{i=0}^N L_i\to X
\]
has transverse self-intersection. Set $V=\{0,\dots, N\}$ and $E=L \times_X L$.

First, for each $m<n$, we can similarly obtain a Lagrangian immersion
\[
\textstyle \iota_m: L'\equiv \bigsqcup_{i=0}^m L_i\to X.
\]

\begin{prop}
\label{factor_closed_m_N_prop}
$\mathscr M_{\iota_m}$ is a factor-closed $\mathbf{fc}$-submulticategory of $\mathscr M_{\iota_N}$.
\end{prop}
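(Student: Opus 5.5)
The plan is to reduce the statement to Proposition \ref{prop:endpoint_closed_factor_closed}. For that I must show two things: first, that $\mathscr M_{\iota_m}$ is the full $\mathbf{fc}$-submulticategory of $\mathscr M_{\iota_N}$ on a directed subgraph $E_0$; second, that $E_0$ is endpoint-closed in $E_{\iota_N}$.

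\textbf{Identifying the subgraph.} Set $V_0=\{0,\dots,m\}\subset V$. The ambient edge set is
\[
E_{\iota_N}=L\times_X L=\bigsqcup_{i,j\le N} L_i\times_X L_j .
\]
The edge set of $\mathscr M_{\iota_m}$ is the union of the pieces $L_i\times_X L_j$ with $i,j\le m$, so it equals $E_0:=E_{\iota_N}\cap (L'\times L')$. Both sets are well defined and closed under $s,t$, which gives a directed subgraph $E_0=(V_0,E_0)$; this identification is a tautology.

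\textbf{Fullness.} Let $(\vec e;e_0)$ be a profile-loop in $E_0$. A polygon in $\mathcal M_{\iota_N}(\vec e;e_0)$ carries boundary lifts $\gamma_j$. By the proof of Lemma \ref{M_pseudo_loop_lem}, each $\gamma_j$ lies in the component $L_{v_j}$, where $v_j\le m$ is read off from the corners. The $\gamma_j$ therefore all lie in $L'$, so the tuple $(\Sigma,z,u,\gamma)$ is literally a stable polygon for $\iota_m$. Conversely, every $\iota_m$-polygon is an $\iota_N$-polygon, and the two notions of isomorphism coincide. Hence
\[
\mathcal M_{\iota_m}(\vec e;e_0)=\mathcal M_{\iota_N}(\vec e;e_0),
\]
the unit 2-cells agree, and the gluing composition of Theorem \ref{fc_moduli_unlabeled_thm} (node-gluing as in Proposition \ref{multicategory_moduli_prop}) is computed identically in both. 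So the inclusion is a map of $\mathbf{fc}$-multicategories, and $\mathscr M_{\iota_m}$ is the full $\mathbf{fc}$-submulticategory on $E_0$. Both structures are vertically discrete, so condition (1) of Definition \ref{full_fc_sub_defn} holds trivially.

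\textbf{Endpoint-closedness.} Take a profile-loop $(\vec e;e')$ in $E_{\iota_N}$ with $\vec e\in E_0^*$, and first suppose $n\ge1$. Then $s(e')=s^*(\vec e)\in V_0$ and $t(e')=t^*(\vec e)\in V_0$. This gives $e'\in L_i\times_X L_j$ with $i,j\le m$, hence $e'\in E_0$.

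The case $n=0$ is the point to watch, and I expect it to be the main obstacle, since Definition \ref{endpoint_closed_defn} as written fails there. The empty path has no distinguished vertex: the profile-loop $(\varnothing;e')$ only requires $s(e')=t(e')$. A loop at a vertex $v>m$ therefore sits over the empty word, which trivially lies in $E_0^*$, yet $e'\notin E_0$.

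I would handle this by rereading the empty path as being based at a vertex, as in the path category $F(E)$, where the empty path $x\to x$ is the identity at $x$. With that reading, $(\varnothing_v;e')$ has $\varnothing_v\in E_0^*$ only when $v\in V_0$, and then $e'\in L_v\times_X L_v\subset E_0$.

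As a fallback, I would argue factor-closedness directly in the one case Proposition \ref{prop:endpoint_closed_factor_closed} uses. Suppose $\mathbf u\circ_i\mathbf u'\in\mathcal M_0$ with $\mathbf u'$ having empty input. The slot $e_i$ of $\mathbf u$ is then a loop at the vertex $v_{i-1}=v_i$. The adjacent inputs $e_{i-1},e_{i+1}$, or $e_0$ if there are none, lie in $E_0$, so that vertex is in $V_0$. In every case $e_i\in E_0$, and the argument of Proposition \ref{prop:endpoint_closed_factor_closed} goes through verbatim.
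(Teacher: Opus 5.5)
Your proposal is correct and follows the paper's proof: you identify $\mathscr M_{\iota_m}$ as the full $\mathbf{fc}$-submulticategory on $E_0=L'\times_X L'$, check that $E_0$ is endpoint-closed, and apply Proposition~\ref{prop:endpoint_closed_factor_closed}. Your extra care with the empty word is sound, because the paper's step ``in particular $v_0,v_n\in V_0$'' tacitly reads the empty path as based at a vertex, exactly as you propose; under the literal single-element reading of the $n=0$ summand of $E^*$, a loop at a vertex $v>m$ would indeed violate endpoint-closedness.
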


\begin{proof}
Let $V_0=\{0,\dots, m\}$.
The directed graph $E_0:= L'\times_X L'$
is a directed subgraph of
$E =  L\times_X L$,
and observe that $\mathscr M_{\iota_m}$ is the full $\mathbf{fc}$-submulticategory of $\mathscr M_{\iota_n}$ on $E_0$ in the sense of Definition~\ref{full_fc_sub_defn}. 
Let $\vec e=(e_1,\dots, e_n)\in (E_0)^*$, and let $e_i$ be an edge from $v_{i-1}$ to $v_{i}$ for $i=1,\dots, n$.
This means $e_i$ is a point in $L_{v_{i-1}}\times_X L_{v_i}=L_{v_{i-1}}\cap L_{v_i}$, and thus $v_{i-1}, v_i\in V'$. In particular, $v_0,v_n\in V_0$.
Then, every edge $e' \in E$ from $v_0$ to $v_n$ is contained in $L_{v_0}\cap L_{v_n} \subset L'\cap L'$, namely, $e'\in E_0$.
So, $E_0$ is endpoint-closed in $E$ in the sense of Definition \ref{endpoint_closed_defn}.
Therefore Proposition~\ref{prop:endpoint_closed_factor_closed} applies and yields the desired factor-closedness.
\end{proof}

Second, consider a partition $V=V'\sqcup V''$ with $V=\{0,\dots,N\}$. This determines two Lagrangian immersions
\[
\iota' : L'=\bigsqcup_{v\in V'} L_v \to X,
\qquad
\iota'' : L''=\bigsqcup_{v\in V''} L_v \to X .
\]
Recall that $L=\bigsqcup_{v\in V} L_v=L'\sqcup L''$ and $E=L\times_X L$. We regard an element
$e=(p_1,p_2)\in E$ as a directed edge from $v_1$ to $v_2$, where $p_1\in L_{v_1}$ and $p_2\in L_{v_2}$.
There is also the reversed edge $\bar e:=(p_2,p_1)$ from $v_2$ to $v_1$ in $E$.
Now, we further consider the directed subgraph
\[
E_0:=(L'\times_X L') \cup (L'\times_X L'') \cup (L''\times_X L'')
\]
of $E$.
In other words, $E_0$ consists of all edges $e=(p_1,p_2)$ except those with $p_1\in L''$ and $p_2\in L'$, i.e.\ except the edges whose source lies in $V''$ and whose target lies in $V'$.
For instance, if $e\in L'\times_X L''\subset E_0$, then $\bar e\in L''\times_X L'$ and hence $\bar e\notin E_0$.

In this context, we can construct a vertically discrete $\mathbf{fc}$-submulticategory
$\mathscr M_{\iota',\iota''}$ of $\mathscr M_{\iota_N}$
to be the full $\mathbf{fc}$-submulticategory of $\mathscr M_\iota$ on the subgraph $E_0$ in the sense of Definition~\ref{full_fc_sub_defn}.

\begin{prop}
\label{factor_closed_partition_2_prop}
$\mathscr M_{\iota',\iota''}$ is factor-closed in $\mathscr M_{\iota_N}$.
\end{prop}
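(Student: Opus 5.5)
The plan is to reduce the statement to Proposition~\ref{prop:endpoint_closed_factor_closed}, exactly as in the proof of Proposition~\ref{factor_closed_m_N_prop}. By construction $\mathscr M_{\iota',\iota''}$ is the full $\mathbf{fc}$-submulticategory of the vertically discrete $\mathbf{fc}$-multicategory $\mathscr M_{\iota_N}$ on the directed subgraph $E_0$. So it suffices to check that $E_0$ is endpoint-closed in $E$ in the sense of Definition~\ref{endpoint_closed_defn}.

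First I will rephrase membership in $E_0$ in terms of the vertex set $V=V'\sqcup V''$. An edge $e\in E$ from $v_1$ to $v_2$ lies in $E_0$ if and only if it is \emph{not} the case that $v_1\in V''$ and $v_2\in V'$. Equivalently, define a function $h:V\to\{0,1\}$ by $h|_{V'}=0$ and $h|_{V''}=1$. Then $e\in E_0$ if and only if $h(s(e))\le h(t(e))$, so $E_0$ consists precisely of the edges along which $h$ is non-decreasing.

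Now take a profile-loop $(\vec e;e')$ in $E$ with $\vec e=(e_1,\dots,e_n)\in E_0^*$, where $e_i$ goes from $v_{i-1}$ to $v_i$ and $e'$ goes from $v_0$ to $v_n$. Since each $e_i\in E_0$, we get $h(v_0)\le h(v_1)\le\cdots\le h(v_n)$, hence $h(s(e'))=h(v_0)\le h(v_n)=h(t(e'))$, and therefore $e'\in E_0$. This is exactly the endpoint-closedness condition. Proposition~\ref{prop:endpoint_closed_factor_closed} then gives factor-closedness.

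The only point needing care is the degenerate case $n=0$, where the profile-loop is $(\varnothing;e')$ with $s(e')=t(e')=v_0$. The inequality $h(v_0)\le h(v_0)$ holds trivially, so $e'\in E_0$ here as well. I would also confirm that $E_0$ really is a directed subgraph with vertex set all of $V$, which is immediate from its definition.

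I expect no genuine obstacle. The main thing to get right is the monotonicity reformulation, since it is the transitivity of $\le$ along the composable path $\vec e$ that makes $E_0$ endpoint-closed. Note that the symmetric-looking alternative subgraph, the one that also allows edges from $V''$ to $V'$, would fail this test.
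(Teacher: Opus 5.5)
Your proposal is correct and follows the paper's proof: both reduce to Proposition~\ref{prop:endpoint_closed_factor_closed} by showing that $E_0$ is endpoint-closed. Your height function $h$ with $h(v_0)\le\cdots\le h(v_n)$ repackages the paper's case analysis on where $v_0$ and $v_n$ lie. It also spells out the transitivity argument behind the paper's claim that a path in $E_0$ cannot start in $V''$ and end in $V'$, and it extends directly to the $r$-part partition in \eqref{partition_V_to_r_sets}.
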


\begin{proof}
By Proposition \ref{prop:endpoint_closed_factor_closed},
it suffices to show that $E_0$ is endpoint-closed in $E$.
Suppose $\vec e=(e_1,\dots, e_n)\in (E_0)^*$ and $e_i$ is an edge from $v_{i-1}$ to $v_i$ for $i=1,\dots, n$. Let $e'\in E$ be an edge from $v_0$ to $v_n$. We aim to show $e'\in E_0$.
If both $v_0$ and $v_n$ lie in $V'$, then $e'\in L_{v_0}\times_X L_{v_n}\subset L'\times_X L'$ lies in $E_0$.
If both $v_0$ and $v_n$ lie in $V''$, the argument is similar.
If $v_0\in V'$ and $v_n\in V''$, then $e'\in L'\times_X L''$ also lies in $E_0$.
Finally, we observe that the case $v_0\in V''$ and $v_n\in V'$ is impossible since there is no edge from $V''$ to $V'$.
\end{proof}

We remark that given a partition $V=\bigsqcup_{j=1}^r V^{(j)}$ with induced immersions $\iota^{(j)}:L^{(j)}\to X$, set
\begin{equation}
\label{partition_V_to_r_sets}
E_0:=\bigcup_{j\le k} \, L^{(j)}\times_X L^{(k)}  
\end{equation}
Then, $E_0$ is endpoint-closed, hence determines a factor-closed $\mathbf{fc}$-submulticategory
$\mathscr M_{\iota^{(1)},\dots,\iota^{(r)}}$ of $\mathscr M_\iota$ by almost the same argument as above. More generally, any endpoint-closed directed subgraph $E_0\subset E$ induces a factor-closed $\mathbf{fc}$-submulticategory of $\mathscr M_\iota$.
At present, however, the cases we have encountered in symplectic applications are primarily those arising from the subcollection of moduli spaces in Proposition \ref{factor_closed_m_N_prop} and \ref{factor_closed_partition_2_prop}.

	\section{Algebras over $\mathbf{fc}$-multicategories and their dg variants}
	\label{s_A_inf_as_algebra_for_dg_fc}
	

	Operads and multicategories are special cases of $\mathbf{fc}$-multicategories by Example \ref{suspension_ex}. Hence, one should be able to recover, from the notion of an algebra over $\mathbf{fc}$-multicategories, the usual notion of an algebra over an operad. 
	Just as groups admit representations, multicategories admit algebras. 
	One can develop the idea of algebras over $T$-operads and $T$-multicategories for any cartesian monad $T$, and there are several equivalent definitions as discussed in \cite[\S 4.3, \S 6.3, \S 6.4]{leinster2004higher}.
	In this paper, we choose the one in \cite[\S 6.4]{leinster2004higher} which is in a form analogous to the classical endomorphism description as in \eqref{operad_A_inf_intro_eq}.
	
	For our purpose, we only focus on the case $T=\mathbf {fc}: \mathcal D\to\mathcal D$ here.
	Recall that we denote by $\mathcal D$ the category of directed graphs (\S \ref{s_fc_directedgraphs}).

	Fix a directed graph $E=(V,E,s,t)$ in $\mathcal D$. 
	Let $p_X:X\to E$ and $p_Y:Y\to E$ be objects of the slice category $\mathcal D/E$.
	More specifically, the relevant directed graphs are written as
	\[
	X=(\bar X, X, \mathfrak s_X,\mathfrak t_X), \qquad Y=(\bar Y, Y, \mathfrak s_Y, \mathfrak t_Y)
	\]
	and the maps of directed graphs are written as \[
	p_X=(\bar p_X,p_X):(\bar X,X)\to (V,E),\qquad
	p_Y=(\bar p_Y,p_Y):(\bar Y,Y)\to (V,E),
	\]

	\subsection{Exponential objects and endomorphisms}
\label{s_exponential_objects_endomorphisms}
	For objects $A\xrightarrow{a}E$ and $B\xrightarrow{b}E$
	in the slice category $\mathcal D/E$, an \emph{exponential object} $B^{A}\xrightarrow{\pi}E$
	is an object of $\mathcal D/E$ together with an {evaluation morphism}
	$\ev:\ B^{A}\times_{E} A \longrightarrow B$ in $\mathcal D/E$ with the following universal property: for every object $Z\xrightarrow{z}E$ in $\mathcal D/E$, the map
	$
	\Hom_{\mathcal D/E}(Z,\ B^{A})
	\to
	\Hom_{\mathcal D/E}(Z\times_{E}A,\ B)$ given by
	$u \mapsto \ev\circ (u\times_{E}\id_A)$
	is a bijection.
	Note that for each $E\in\mathcal D$, the slice category $\mathcal D/E$ admits exponential objects.
	
	Define objects
	\[
	G_1(X):=G_{1,E}(X)\ :=\ \bigl(X^*\times E \xrightarrow{\,p^*_X \times \id_E\,} E^*\times E\bigr)\]
	\[
	G_2(Y):=G_{2,E}(Y)\ :=\ \bigl(E^* \times Y \xrightarrow{\,\id_{E^*}\times p_Y\,} E^*\times E\bigr).
	\]
	in the slice category $\mathcal D/ (E^*\times E)$, and define
	\[
	\mathbf{Hom}(X,Y)\ :=\ G_2(Y)^{\,G_1(X)}\ \in\ \mathcal D/(E^*\times E),
	\]
	as the exponential object.
	Recall that we denote by $\mathbf {fc}(E)$ by $E^*$ as in (\ref{E*_eq}). 	

	For $v\in V$ and $e\in E$, define $\bar X(v)=\bar p_X^{-1}(v)$ and $X(e)=p_X^{-1}(e)$; define $\bar Y(v)$ and $Y(e)$ similarly.
	For $\vec e=(e_1,\dots, e_n)\in E^*$,
	we define the set of \emph{lifts of $\vec e$ in $X$} by
	\begin{equation}
		\label{X*_eq}
		\begin{aligned}
			X^*(\vec e) 
			&= (p_X^*)^{-1} (\vec e)  = X(e_1)\times_{\bar X} \cdots \times_{\bar X} X(e_n) \\
			&= \left\{ (x_1,\dots, x_n) \in X(e_1)\times \cdots \times X(e_n) \mid \mathfrak t_X(x_i)=\mathfrak s_X(x_{i+1}) \ , i=1,\dots, n-1
			\right\}
		\end{aligned}
	\end{equation}
	In other words, $X^*(\vec e)$ is the set of composable paths in $X$ that project to $\vec e$ under $p_X^*$. We also define $Y^*(\vec e)$ similarly.
	Note that $E^*\times E$ is a directed graph whose set of vertices is $V\times V$.
	
	Denote the vertex set of $\mathbf{Hom}(X,Y)$ by
	$\mathbf {V}(X, Y)$
	and by abuse of notation, also write $\mathbf{Hom}(X,Y)$ for its edge set.
	\textit{A vertex} over $(u , u')\in V\times V$ is a triple $(u, u', \phi )$ or simply $\phi$, where $u, u' \in V$ and $\phi : \bar X(u) \to \bar Y(u')$ is a function.
	In other words,
	\[\mathbf {V}(X, Y) := \{  \phi : \bar X(u)\to\bar Y(u')  \ \mid \ u,u'\in V  \}  \ . \]
	Given $(\vec e; e')\in E^*\times E$, let $v_0, v_1,\dots, v_n$ be the vertices of $\vec e$, and let $v'_0,v'_1$ be the source and target vertices of $e'$.
	\textit{An edge of $\mathbf{Hom}(X,Y)$} over $(\vec e, e')$ from $(v_0, v'_0 , \phi )$ to another $(v_n, v'_1, \psi)$
	is a tuple $(\vec e, e'; \phi, \psi; \xi)$ or simply $\xi$ where
	\begin{equation}
		\label{xi_eq}
		\xi :\  X^*(\vec e) \to  Y(e')
	\end{equation}
	is a function such that for a lifted path $\sigma\in X^*(\vec e)$, the edge $\xi(\sigma)\in Y(e')$
	has endpoints compatible with $\phi$ and $\psi$.
	The source and target maps $\mathbf s_{X,Y},  \mathbf t_{X,Y} : \mathbf{Hom}(X,Y)\to \mathbf V(X,Y)$ send this edge $\xi$ to the vertices $(v_0, v'_0,\phi)$ and $(v_n, v'_1, \psi)$ respectively.
	For the slice category $\mathcal D/ (E^*\times E)$, we have two structure maps
	\[
	\pmb d_{E,X,Y} \times \pmb c_{E,X,Y} : \ (\mathbf V(X,Y), \mathbf{Hom}(X,Y) ) \to  (V\times V, E^* \times E)
	\]
	of directed graphs, sending a vertex $(u,u',\phi)\in\mathbf V(X,Y)$ to $(u, u')\in V\times V$, and sending an edge, i.e. a tuple $\xi=(\vec e, e'; \phi, \psi ; \xi)$ to $(\vec e, e')\in E^*\times E$. 
	
	When $X=Y$, we write $\mathbf V(X)=\mathbf V(X,X)$ and $\mathbf {End}(X)=\mathbf{Hom}(X,X)$; the structure maps are denoted by $\pmb d_{E,X}$ and $\pmb c_{E,X}$.
	The following is due to \cite[Proposition 6.4.2]{leinster2004higher}, and here we specialize to the case of $\mathbf{fc}$-multicategories.

	\begin{prop}
		\label{End_E_fc_prop}
		Let $E=(V,E,s,t)$ be an object in $\mathcal D$.
		Let $X=(\bar X, X,\mathfrak s,\mathfrak t, p_X)$ be an object in the slice category $\mathcal D/E$.
		There is a natural $\mathbf{fc}$-multicategory (here we abuse the notation again)
		\[
		\mathbf{End}(X) = (V, E, \mathbf {V}(X), \mathbf{End}(X) )
		\]
		Specifically, 
		\begin{itemize}
			\itemsep 2pt
			\item a $0$-cell is a vertex $V$ of $E$.
			\item a horizontal 1-cell from $v_0$ to $v_1$ is an edge $e\in E$.
			\item a vertical $1$-cell from $u$ to $u'$ is an element in $\mathbf V(X)$ over
			$(u,u')\in V\times V$.
			\item a $2$-cell $\xi$ over an edge $(\vec e,e')\in E^*\times E$
			is an element in $\mathbf{End}(X)$ over $(\vec e,e')$.
		\end{itemize}
	\end{prop}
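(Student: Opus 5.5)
Before defining any structure we must get the vertical category right: vertical 1-cells must compose and have identities, and a 2-cell must not depend on a chosen lift of its input path. As printed, a vertex of $\mathbf{End}(X)$ over $(u,u')$ is a function $\phi:\bar X(u)\to\bar X(u')$ with $u,u'$ unconstrained; horizontally $u$ is the source of $\vec e$ and $u'$ the source of $e'$. For a 2-cell over $(\vec e;e')$ to be a 2-cell of an $\mathbf{fc}$-multicategory over $E$, the projections $\pmb d,\pmb c$ must land in $E^*\xleftarrow{}\cdot\xrightarrow{}E$ with matching corners. I will therefore set up the structure so that the four corners of a 2-cell over $(\vec e;e')$ are $s^*\vec e=s(e')$ and $t^*\vec e=t(e')$, i.e.\ $(\vec e;e')$ is a profile-loop as in Definition \ref{profile_loop_defn}. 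The vertical 1-cells from $u$ to $u'$ are then the functions $\phi:\bar X(u)\to\bar X(u')$, which form a category under ordinary composition with identities $\id_{\bar X(u)}$. This is the category $(V,\mathbf V(X))$ demanded by the vertex part $\bar\gamma,\bar\iota$. For $\mathbf V(X)$ to be a category over $V$ one needs $u=u'$ only in the vertically discrete specialization, not in general. I will check that the requirement $\pmb d\times\pmb c$ be a map of directed graphs forces the compatibility conditions on $\xi(\sigma)$ stated after (\ref{xi_eq}): $\mathfrak s(\xi(\sigma))=\phi(\mathfrak s^*\sigma)$ and $\mathfrak t(\xi(\sigma))=\psi(\mathfrak t^*\sigma)$.

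Next I define the multiplication. An edge of $\mathbf{End}(X)\circ\mathbf{End}(X)$ is pasting data $(\xi;\xi_1,\dots,\xi_n)$. Here $\xi:X^*(\vec e)\to X(e')$ with $\vec e=(e_1,\dots,e_n)$, and $\xi_i:X^*(\vec e_i)\to X(e_i)$ goes from $\phi_{i-1}$ to $\phi_i$. The vertical boundary of $\xi$ is $(\phi',\psi')$, with source $\phi'$ and target $\psi'$. Vertical composability means the source of $\xi$ receives $\phi_0$ and its target receives $\phi_n$. I set
\[
\gamma(\xi;\xi_1,\dots,\xi_n)(\sigma_1*\cdots*\sigma_n):=\xi\bigl(\xi_1(\sigma_1),\dots,\xi_n(\sigma_n)\bigr).
\]
This is well defined because $X^*(\vec e_1*\cdots*\vec e_n)=X^*(\vec e_1)\times_{\bar X}\cdots\times_{\bar X}X^*(\vec e_n)$, which canonically splits a lift of a concatenation. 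The main hurdle is that the tuple $(\xi_1(\sigma_1),\dots,\xi_n(\sigma_n))$ lies in $X^*(\vec e)$ only if consecutive outputs share endpoints, i.e.\ $\mathfrak t(\xi_i(\sigma_i))=\mathfrak s(\xi_{i+1}(\sigma_{i+1}))$. This holds exactly because $\xi_i$ and $\xi_{i+1}$ share the vertical 1-cell $\phi_i$ and $\mathfrak t^*\sigma_i=\mathfrak s^*\sigma_{i+1}$. The resulting 2-cell runs from $\phi'\circ\phi_0$ to $\psi'\circ\phi_n$, so $\bar\gamma$ is vertical composition as required. The unit sends $e$ to the 2-cell $\iota(e)=\id_{X(e)}:X^*(e)=X(e)\to X(e)$, which has identity vertical boundaries.

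Finally I verify the axioms in Definition \ref{multicategory_defn}, namely that $\pmb d,\pmb c$ intertwine with $\mu,\eta$ of $\mathbf{fc}$. The source law holds because the composite lies over the concatenation $\vec e_1*\cdots*\vec e_n$. Associativity and the unit laws reduce to associativity of function composition and to $\id$ being neutral. Associativity also uses that splitting lifts along nested concatenations is coherent, which is the associativity of $\mu$ for $\mathbf{fc}$.

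To justify "natural", I then identify this concrete structure with Leinster's exponential description $G_2(X)^{G_1(X)}$ of \cite[Proposition 6.4.2]{leinster2004higher}. An edge of the exponential over $(\vec e,e')$ is, by the universal property in $\mathcal D/(E^*\times E)$ tested on a single-edge graph, precisely such a $\xi$ with endpoint data. In the same way, vertices correspond to the functions $\phi$. Under this identification the multiplication above is the one induced by the evaluation maps, so the concrete construction agrees with Leinster's general one.
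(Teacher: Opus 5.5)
\textbf{Gap: the restriction to profile-loops.} Your opening paragraph asserts that the corners of a 2-cell must match, and so you admit 2-cells only over $(\vec e;e')$ with $s^*\vec e=s(e')$ and $t^*\vec e=t(e')$. The axioms do not say this. That $\pmb d,\pmb c$ are maps of directed graphs only forces $\bar d(\hat s\xi)=s^*\vec e$ and $\bar c(\hat s\xi)=s(e')$ (likewise for $\hat t\xi$). In other words, the corners are \emph{joined by vertical 1-cells}. Making them coincide is exactly vertical discreteness (Definition \ref{vert_disc_fc_defn}), which $\mathbf{End}(X)$ does not have.

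The statement, and likewise the exponential $G_2(X)^{G_1(X)}$, contains 2-cells $(\vec e,e';\phi,\psi;\xi)$ over every $(\vec e,e')\in E^*\times E$, with arbitrary $\phi:\bar X(s^*\vec e)\to\bar X(s(e'))$ and $\psi:\bar X(t^*\vec e)\to\bar X(t(e'))$. So your last paragraph contradicts your first: testing the exponential on the single-edge graph over a $(\vec e,e')$ that is not a profile-loop produces edges your structure lacks.

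Worse, with the vertical category you chose (all functions $\bar X(u)\to\bar X(u')$, with $u\neq u'$ allowed), your restricted structure is not even closed under multiplication. Take $\xi$ over a profile-loop $(\varnothing_v;e')$ with empty input at $v$, and let $\phi_0:\bar X(u)\to\bar X(v)$ with $u\neq v$. Then $(\xi;\ \text{empty path at }\phi_0)$ is an edge of $\mathbf{End}(X)\circ\mathbf{End}(X)$. Its composite lies over $(\varnothing_u;e')$ with $s(e')=v\neq u$, which is not a profile-loop.

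\textbf{Repair.} Drop the restriction; nothing else in your argument uses $s^*\vec e=s(e')$. Your formula for $\gamma$, the endpoint matching $\mathfrak t(\xi_i(\sigma_i))=\phi_i(\mathfrak t^*\sigma_i)=\phi_i(\mathfrak s^*\sigma_{i+1})=\mathfrak s(\xi_{i+1}(\sigma_{i+1}))$, and the vertical boundaries $\phi'\circ\phi_0$ and $\psi'\circ\phi_n$ all go through for arbitrary $(\vec e,e')$. There are two smaller corrections.
\begin{enumerate}
\item The conditions $\mathfrak s(\xi(\sigma))=\phi(\mathfrak s^*\sigma)$ and $\mathfrak t(\xi(\sigma))=\psi(\mathfrak t^*\sigma)$ are part of what an edge of the exponential \emph{is}: a graph map from the fibre of $G_1(X)$ over the single-edge graph into $G_2(X)$. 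They are not forced by $\pmb d\times\pmb c$ being a graph map, which only fixes the pairs $(u,u')$ over which $\phi$ and $\psi$ lie.
\item Your formula must be read so that it covers $n=0$. There the second factor is an empty path at a vertical 1-cell $\phi_0$. The composite is $x\mapsto\xi(\phi_0(x))$ on $(p_X^*)^{-1}(\varnothing_u)=\bar X(u)$, where $u=\bar d(\phi_0)$.
\end{enumerate}

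\textbf{Comparison with the paper.} Once fixed, your route is more complete than the paper's. The paper only draws the rectangle, gives the identities, and describes $\circ_i$ by pasting $\xi_2$ between identity 2-cells, deferring the axioms to Leinster's Proposition 6.4.2. Identity 2-cells have identity vertical sides, so that picture is well formed only when $\xi_2$ has identity sides (for $1<i<n$). In this non-vertically-discrete setting, it is your full multiplication $\gamma$ that actually describes the general composite.
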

	
	\begin{proof}
		Similar to \eqref{2_cell_eq}, the description of a 2-cell and the relevant 0-cells and horizontal/vertical 1-cells can be illustrated by a rectangle diagram as follows:
		\begin{equation}
			\label{2_cell_End}
			\xymatrix{
				\bar X(v_n)\ar@{<-}[rr]^{X(e_n)} \ar[d]^{{\psi }} & & \bar X(v_{n-1}) \ar@{<-}[rr]^{X(e_{n-1})} & & & \cdots & {}\ar@{<-}[rr]^{X(e_1)} & & \bar X(v_0) \ar[d]^{\phi} \\
				\bar X(v'_1) \ar@{<-}[rrrrrrrr]_{X(e')} & &  & & \ar@{}[u]|{\Downarrow \  \xi }&  &  & & \bar X(v'_0)
			}
		\end{equation}
		Here an arrow $\bar X(v_i)  \xleftarrow{X(e_i)}  \bar X(v_{i-1})$ really represents a span $\bar X(v_i) \xleftarrow{\mathfrak t} X(e_i) \xrightarrow{\mathfrak s} \bar X(v_{i-1})$
		and thus a string of arrows represents the set of lifts $X^*(\vec e)$ as in (\ref{X*_eq}).
		
		The identity 
		$
		\pmb\iota_{E,X}: E\to \mathbf{End}(X)
		$ is defined by sending a vertex $v$ of $E$ to $(v,v,\id_{\bar X(v)})$ and by sending an edge $e$ from $u$ to $u'$ to the evident 2-cell $\id_{X(e)}=((e), e \ ; \id_{\bar X(u)}, \id_{\bar X(u')} ; \id_{X(e)})$, illustrated as follows:
		\[
		\iota_{E,X} : \ \ \vcenter{\hbox{\xymatrix{u' & u \ar[l]_e }}}
		\qquad \leadsto \qquad
		\vcenter{\hbox{
				\xymatrix{ \bar X(u') \ar[d]_\id  \ar@{}[dr]|{\Downarrow \  \id }  & \bar X(u) \ar[d]_\id  \ar[l]_{X(e)}\\
					\bar X(u') & \bar X(u) \ar[l]^{X(e)} }
		}}
		\]
		Given $\vec e=(e_1,\dots, e_n) \in E^*$, $e'\in E$, $\vec f=(f_1,\dots, f_m)\in E^*$, and $1\le i\le n$, the composition
		\begin{align*}
			\circ_i : & \ \mathbf{End}(X) (e_1,\dots, e_n; e') \times  \mathbf{End}(X) (f_1,\dots, f_m; e_i)  \\
			& \to
			\mathbf{End}(X) (e_1,\dots, e_{i-1}, f_1,\dots, f_m, e_{i+1},\dots, e_n; e') \ , \quad (\xi_1,\xi_2)\mapsto \xi_1 \circ_i \xi_2
		\end{align*}
		for the 2-cells can be described as follows
		\[
		\xymatrix{
			\bar X(v_n) \ar[d]_{\id} \ar@{}[dr]|{\Downarrow \  \id } & {\quad} \ar[l] & \bar X(u_m) \ar[d]_{\psi_2} &  \cdots  \ar[l]_{X(f_m)} \ar@{}[d]|{\Downarrow \  \xi_2 } & \bar X(u_0) \ar[l]_{X(f_1)} \ar[d]_{\phi_2} & \quad &  \bar X(v_0) \ar[d]^{\id} \ar[l] \ar@{}[dl]|{\Downarrow \  \id } \\
			\bar X(v_n) \ar[d]_{\psi_1} & \cdots \ar[l] & \bar X(v_i) \ar[l] &  & \bar X(v_{i-1}) \ar[ll] & \cdots \ar[l] & \ar[l] \bar X(v_0) \ar[d]^{\phi_1}  \\
			\bar X(v_1')  & & & \ar@{}[u]|{\Downarrow \  \xi_1}  & & & \bar X(v_0') \ar[llllll]^{X(e')}
		}
		\]
	\end{proof}

	\subsection{Algebras over $\mathbf{fc}$-multicategories}
	
	Following \cite[\S 6.4]{leinster2004higher}, we introduce:
	
	\begin{defn}
		\label{C_alg_defn_End}
		Let $E=(V,E,s,t)\in\mathcal D$. Let $\mathscr M=(V,E,\mathcal V,\mathcal M, \pmb d, \pmb c,\pmb \iota, \pmb \gamma)$ be a $\mathbf{fc}$-multicategory over $E$.
		By a \textit{$\mathscr M$-algebra} or an \textit{algebra over $\mathscr M$}, we mean a pair $(X,\pmb \alpha)$ consisting of
		\vspace{2pt}
		\begin{itemize}
			\itemsep 4pt
			\item an object 
			$X=(\bar X,X, \mathfrak s,\mathfrak t, p_X)
			$
			in the slice category $\mathcal D/E$, that is, $(\bar X, X,\mathfrak s,\mathfrak t)$ is a directed graph and $p_X=(\bar p_X, p_X): (\bar X,X)\to (V,E)$ is a map of directed graphs.
			
			\item a map of $\mathbf{fc}$-multicategories 
			$
			\pmb \alpha : \mathscr M\to \mathbf{End}(X)
			$
			such that the induced map of directed graphs $(V,E) \to (V,E)$ is the identity.
		\end{itemize}
	\end{defn}

	The following extra condition is often useful in practice.
	
	\begin{defn}
		\label{C_alg_X_simple_defn}
		We call a directed graph $X=(\bar X, X, \mathfrak s,\mathfrak t,  p_X)$ over $E=(V,E,s,t)$ \textit{simple} if $\bar X=V$, $\mathfrak s=s\circ p_X$, $\mathfrak t= t\circ p_X$, and $\bar p_X=\id_V$.
		Also, we call a $\mathscr M$-algebra $(X,\pmb\alpha)$ simple if $X$ is \textit{simple}.
	\end{defn}

	From now on, we mostly restrict attention to the case $X=(\bar X, X, \mathfrak s,\mathfrak t, p_X)$ is {simple}. This hypothesis often simplifies the discussion and is sufficient for our purposes.
	Let's unpack the data for $(X,\pmb\alpha)$ when $X$ is simple.
	On one hand, we note that every $\bar X(v)=\{v\}$ is a one-point set, so a map $\bar X(u)\to \bar X(u')$ is always the unique trivial map.
	Thus, $\mathbf {V}(X)\cong V\times V$.
	By the simple condition, the set of lifts of $\vec e=(e_1,\dots, e_n)\in E^*$ in (\ref{X*_eq}) becomes just a direct product
	$
	X^*(\vec e) =X(e_1) \times \cdots \times X(e_n)
	$.
	In fact, since $(X,\pmb\alpha)$ is simple and $X(e)=p_X^{-1}(e)$, the condition $\mathfrak t(x_i)=\mathfrak s(x_{i+1})$ in (\ref{X*_eq}) reduces to $t\circ p_X(x_i)=s\circ p_X(x_{i+1})$ and then $t(e_i)=s(e_{i+1})$, which precisely corresponds to the condition $\vec e\in E^*$.
	Therefore, a 2-cell in $\mathbf {End}(X)$ is a map 
	$\xi: X(e_1)\times \cdots \times X(e_n)\to X(e')$ for $ \vec e=(e_1,\dots, e_n)\in E^*$ and $e'\in E$. In particular,
\begin{equation}
\label{End_X_set_eq}
	\mathbf{End}(X) = \bigcup_{(\vec e, e') \in E^*\times E} \Hom (X(e_1)\times \cdots \times X(e_n) \ , \ X(e') )
\end{equation}

	On the other hand,
	recall that 
	$\pmb \alpha = (\alpha_V, \alpha_E , \alpha_{\mathcal V} , \alpha )$
	consists of two maps of directed graphs $\alpha_E=(\alpha_V,\alpha_E): (V,E)\to (V,E)$ and $\alpha =(\alpha_{\mathcal V},\alpha ) : (\mathcal V, \mathcal M) \to (\mathbf {V}(X), \mathbf{End}(X))$ where $\alpha_E$ is required to be the identity and the three compatibility conditions in Definition \ref{map_fc_defn} hold.
	Specifically, 
	by Definition \ref{map_fc_defn} (1), we see that $\alpha_{\mathcal V}(\id_v)= (v,v)$ and $\alpha (\id_e)=\id_{X(e)}$,
	where $\id_v$ is the identity vertical 1-cell at $v\in V$ and $\id_e$ is the identity 2-cell at $e\in E$.
	By Definition \ref{map_fc_defn} (2), we have $\pmb c= \pmb c_{E,X}\circ \alpha$ and $\pmb d  = \pmb d_{E,X}\circ \alpha$.
	One can then check that $\alpha_{\mathcal V}: \mathcal V \to V\times V$ is just the map sending a vertical 1-cell $f: u\to u'$ in $\mathscr M$ to the pair $(u, u')\in V\times V$.
	In particular, if $\mathscr M$ is a vertically discrete $\mathbf{fc}$-multicategory, i.e. $\mathcal V\cong V$, then $\alpha_{\mathcal V}:V\to V\times V$ is the diagonal map $u\mapsto (u,u)$.
	One can also check that the $\alpha$ sends a 2-cell $\mathbf u \in \mathcal M( \vec e, e')$ for $\vec e=(e_1,\dots, e_n)\in E^*$ and $e'\in E$ to a map 
	$
	\alpha(\mathbf u) : X^*(\vec e) \to X(e')
	$.
	Lastly, by Definition \ref{map_fc_defn} (3),
	$\alpha$ is compatible with the 2-cell compositions, namely,
	$
	\alpha (\mathbf u\circ_i \mathbf u') = \alpha(\mathbf u) \circ_i \alpha(\mathbf u')
	$.

	\subsection{The dg variant of $\mathbf{fc}$-multicategories }

	We would like to realize various $A_\infty$ structures (such as $A_\infty$ algebras, $A_\infty$ bimodules, $A_\infty$ categories) as algebras over certain $\mathbf{fc}$-multicategories.
	Recall that an $A_\infty$ algebra is an algebra over the $A_\infty$ operad, which is a \textit{differential graded (dg)} operad, and in view of (\ref{operad_A_inf_intro_eq}), an $A_\infty$ algebra on a cochain complex $A$ is a dg operad morphism $\mathcal A_\infty \to \mathrm{End}_A$ from the $A_\infty$ operad to the endomorphism operad of $A$.
	
	Accordingly, we may need an appropriate notion of a \textit{dg} $\mathbf{fc}$-multicategories and their morphisms. This might involve invoking the enrichment theory for general $T$-multicategories, which is, in general, a quite nontrivial subject; see the works of Leinster \cite{leinster1999generalized}, \cite[\S6.8]{leinster2004higher}. Clearly, the full framework of this enrichment theory is very deep and may lead us too far afield. Thus, at the cost of reducing some generality, we instead adopt an ad hoc hands-on formulation that should be sufficient for our purposes in studying the $A_\infty$ structures; see also the comments in \cite[Example~5.1.11]{leinster2004higher}.

	Fix a commutative ground ring $\Bbbk$, and we will work in the category of cochain complexes of graded $\Bbbk$-vector spaces with differentials of degree $+1$. 
	The degree of an element $x$ in a cochain complex is usually denoted by $|x|$.

	The following definition is an attempt to generalize the notion of a dg operad.

	\begin{defn} 
		\label{dg_fc_defn}
		A \emph{dg $\mathbf{fc}$-multicategory} over $\Bbbk$ is defined to be the data
		\[
		\widehat {\mathscr M} =( \mathscr M;\delta)
		\]
		where
		\begin{enumerate}
			\itemsep 2pt
			\item $\mathscr M=(V,E, \mathcal V, \mathcal M, \pmb d,\pmb c, \pmb \iota,\pmb \gamma)$ is a $\mathbf {fc}$-multicategory.
			\item Fix $\vec e=(e_1,\dots, e_n)\in E^*$ and $e'\in E$. The fiber
			$
			\mathcal M(e_1,\dots, e_n; e')$ of 2-cells over $(\vec e, e')$ is a graded cochain complex over $\Bbbk$ whose differential is denoted by
			$
			\delta : =\delta_{\vec e, e'}
			$.
			\item  The identity 2-cell $\id_e=\iota(e)$ is a degree zero $\delta$-cycle in $\mathcal M(\vec e; e')$, i.e. $\delta (\id_e)=0$.
			\item  For the (partial) composition of 2-cells, we have the Leibniz-type rule
			\[
			\delta (\mathbf u \circ_i \mathbf u')= (\delta\mathbf u)\circ_i \mathbf u' \ +\ (-1)^{|\mathbf u|}\,\mathbf u\circ_i (\delta\mathbf u').
			\]
		\end{enumerate}
	\end{defn}
	
	\begin{defn}
		\label{map_fc_dg_defn}
		A \textit{map of dg $\mathbf{fc}$-multicategories} $\pmb \alpha : \widehat{\mathscr M} \to \widehat{\mathscr M}'$ is a map of $\mathbf{fc}$-multicategories in the sense of Definition \ref{map_fc_defn} such that the 2-cell components of $\pmb\alpha$ are given by degree-zero cochain maps.
	\end{defn}

	\begin{rmk}
		In the above definition, the collections of 0-cells and of horizontal/vertical 1-cells remain ordinary sets; only the 2-cells are promoted to cochain complexes.
	\end{rmk}
	
	\begin{rmk}
		Recall that the category of directed graphs is the functor category $\mathcal D=[\mathbb H^{\mathrm{op}}, \mathbf{Set}]$ where $\mathbb H$ is the category with two objects and two distinguished morphisms.
		In principle, we may introduce $\mathcal D(\mathcal C)=[\mathbb H^{\mathrm{op}}, \mathcal C]$ for a suitable category $\mathcal C$, and somehow develop the notion of ``$\mathbf{fc}$-multicategories internal to $\mathcal C$''.
		We do not pursue this internal approach here; instead, we adopt a more concrete and slightly ad hoc formulation tailored to our needs.
	\end{rmk}

	\begin{ex}
		\label{dg_operad_as_dg_fc_mult_ex}
		One can view a dg operad as a dg $\mathbf{fc}$-multicategory in Definition \ref{dg_fc_defn}.
		Let $\mathcal O=\{\mathcal O(n)\}_{n\ge 0}$ be a non-symmetric dg operad with unit $1\in \mathcal O(1)$ and partial compositions $\circ_i:\mathcal O(n) \times \mathcal O(m)\to \mathcal O(n+m-1)$ satisfying the associativity axiom as in \eqref{operad_axiom_standard}; here each $\mathcal O(n)$ is a cochain complex that carries a differential $\delta=\delta_n:\mathcal O(n)\to \mathcal O(n)$, compatible with compositions in the sense that
		\begin{equation}
			\label{operadic_leibniz_rule}
			\delta (x\circ_i y) = (\delta x)\circ_i y + (-1)^{|x|}\,x\circ_i(\delta y)
		\end{equation}
		where $|x|$ is the degree of $x$.
		By Example \ref{suspension_ex}, the operad $\mathcal O$ gives rise to a vertically discrete $\mathbf{fc}$-multicategory $ \Sigma\mathcal O$ with a single 0-cell and with vertical and horizontal 1-cells being the identities.
		The set of 2-cells with $n$ input horizontal 1-cells is precisely identified with $\mathcal O(n)$.
		Specifically, if we write $\Sigma\mathcal O=(V,E,\mathcal M,\delta)$ as above, then $V=E=\{\ast\}$ is a one-point set and
		\[
		\mathcal M(\underbrace{\ast, \dots \ast}_n ; \ast)=\mathcal O(n)
		\]
		The identity 2-cell is the unit in $\mathcal O(1)$. The operadic Leibniz rule (\ref{operadic_leibniz_rule}) corresponds to the condition (4) in Definition \ref{dg_fc_defn}.
	\end{ex}

\subsection{The dg endomorphisms}

	Let $E=(V,E,s,t)\in\mathcal D$. Let $\mathscr M=(V,E,\mathcal V,\mathcal M, \pmb d, \pmb c,\pmb \iota, \pmb \gamma)$ be a $\mathbf{fc}$-multicategory over $E$.
	Fix an object $X$ in $\mathcal D/E$ which is required to be \textit{simple} in the sense of Definition \ref{C_alg_X_simple_defn}, so we may write
	\[
	X=(V\ , \ X \ ,  \ s\circ p_X \ , \  s \circ p_X \ , \ p_X)
	\]
	where $p_X=(\id, p_X): (V, X)\to (V, E)$ is a map of directed graphs that is the identity on the vertices.

As an analogue of \eqref{X*_eq}, we introduce
\begin{equation}
\label{X*_dg_eq}
X^*(\vec e) = X(e_1)\otimes \cdots\otimes X(e_n)
\end{equation}
for $\vec e=(e_1,\dots, e_n)\in E^*$, where we abuse the notation.
Here the simple condition for $X$ is necessary, as explained in Remark \ref{X_simple_rmk} below.

Assume that $X(e)$ is a cochain complex for each $e\in E$ with the differential denoted by $\mathsf d_e=\mathsf d_e^X$.
The above $X^*(\vec e)$ is a cochain complex
equipped with the tensor product differential $\mathsf d_{\vec e}=\mathsf d_{\vec e}^X$ so that
\begin{equation}
	\label{d_tensor_X_eq}
	\mathsf d^X_{\vec e}(x_1\otimes\cdots\otimes x_n)
	:=\sum_{k=1}^n (-1)^{|x_1|+\cdots+|x_{k-1}|}\ 
	x_1\otimes\cdots\otimes \mathsf d^X_{e_k}(x_k)\otimes\cdots\otimes x_n.
\end{equation}

\begin{rmk}
\label{X_simple_rmk}
	We impose the auxiliary assumption that $X$ is \emph{simple} as in Definition \ref{C_alg_X_simple_defn}.
	This is somehow ad hoc. However, without this assumption, we may encounter fiber products of cochain complexes by \eqref{X*_eq}.
	Recall that if $(A,\mathsf d_A)$, $(B,\mathsf d_B)$, and $(C,\mathsf d_C)$ are cochain complexes and
	$f:A\to C$ and $g:B\to C$ are cochain maps, then their fiber product
	is the subcomplex
	$
	A\times_{C} B
	:= \ker   \ (f-g: A\oplus B \to C ),
	$
	with differential given by restriction of the product differential
	$
	\mathsf d_{A\times_C B}(a,b):=(\mathsf d_A a,\ \mathsf d_B b).
	$
	While this construction is natural in the category of cochain complexes, it seems not the one that arises in the $A_\infty$ structures.
\end{rmk}

	We can establish a dg variant of $\mathbf{End}(X)$ in Proposition \ref{End_E_fc_prop} as follows:

	\begin{prop}
		\label{End_E_X_widehat_prop}
There is a natural dg $\mathbf{fc}$-multicategory
		\[
		\widehat{\mathbf{End}}(X) = (V, E, V\times V, \widehat{\mathbf{End}}(X)  \ ; \  \widehat{\mathsf d})
		\]
		where
		\begin{itemize}
			\itemsep 2pt
			\item a $0$-cell is a vertex $V$ of $E$.
			\item a horizontal 1-cell from $v_0$ to $v_1$ is an edge $e\in E$.
			\item a vertical $1$-cell from $u$ to $u'$ is an element
			$(u,u')\in V\times V$.
			\item the space of $2$-cells over $(\vec e,e')\in E^*\times E$, denoted by $\widehat{\mathbf{End}}(X) (\vec e; e')$,
			is the graded vector space of maps
			$
			\xi : X(e_1) \otimes \cdots \otimes X(e_n) \to X(e')
			$.
Each $\widehat{\mathbf{End}}(X) (e_1,\dots, e_n; e')$ is a cochain complex whose differential $\widehat{\mathsf d}=\widehat{\mathsf d}_{\vec e; e'}$ is given by
		\end{itemize}
	\begin{equation}
		\label{widehat_d_eq}
		\big(\widehat{\mathsf d} \xi \big) (x_1\otimes \cdots \otimes x_n) := \mathsf d_{e'} \big( \xi (x_1\otimes \cdots \otimes x_n ) \big)-  \sum_{k=1}^n (-1)^{|\xi|+|x_1|+\cdots+|x_{k-1}|} \ \xi(x_1\otimes \cdots \otimes \mathsf d_{e_k} x_k \otimes \cdots \otimes x_n)
	\end{equation}
	\end{prop}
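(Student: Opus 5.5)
We prove Proposition \ref{End_E_X_widehat_prop} by verifying the four conditions of Definition \ref{dg_fc_defn} in turn. The underlying $\mathbf{fc}$-multicategory structure is taken from Proposition \ref{End_E_fc_prop}, specialized to the simple case (so $\mathbf V(X)\cong V\times V$), with cartesian products of the fibers $X(e_i)$ replaced by tensor products as in \eqref{X*_dg_eq}.

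\emph{Structure maps.} The source and target maps of the 2-cells are defined exactly as in the set-theoretic case: a map $\xi:X(e_1)\otimes\cdots\otimes X(e_n)\to X(e')$ lies over $(\vec e;e')$. The vertical 1-cell part is the indiscrete category on $V$, and its composition is forced. The identity 2-cell is $\iota(e)=\id_{X(e)}$. The partial composition is the usual insertion
\[
\xi_1\circ_i\xi_2=\xi_1\circ(\id^{\otimes(i-1)}\otimes\xi_2\otimes\id^{\otimes(n-i)}),
\]
with the Koszul sign convention $(f\otimes g)(x\otimes y)=(-1)^{|g||x|}f(x)\otimes g(y)$. This convention is needed so that the Leibniz rule comes out with the stated sign. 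Associativity and unitality reduce to those of composition and tensor products of graded maps, exactly as for the classical endomorphism operad. The only change is the bookkeeping of colours, which is automatic because an insertion is defined precisely when $c(\xi_2)=e_i$. For $n=0$ we use the convention $X^*(\varnothing)=\Bbbk$.

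\emph{Differential.} Formula \eqref{widehat_d_eq} is the internal Hom differential $\widehat{\mathsf d}\xi=\mathsf d_{e'}\circ\xi-(-1)^{|\xi|}\xi\circ\mathsf d^X_{\vec e}$, with $\mathsf d^X_{\vec e}$ the tensor differential \eqref{d_tensor_X_eq}. Since $\mathsf d_{e'}^2=0$ and $(\mathsf d^X_{\vec e})^2=0$, expanding gives $\widehat{\mathsf d}^2\xi=0$: the two cross terms cancel because $|\widehat{\mathsf d}\xi|=|\xi|+1$ flips the sign. The identity condition (3) holds because $\widehat{\mathsf d}(\id_{X(e)})=\mathsf d_e-\mathsf d_e=0$, and $\id_{X(e)}$ has degree $0$.

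\emph{Leibniz rule.} The main work is condition (4),
\[
\widehat{\mathsf d}(\xi_1\circ_i\xi_2)=(\widehat{\mathsf d}\xi_1)\circ_i\xi_2+(-1)^{|\xi_1|}\xi_1\circ_i(\widehat{\mathsf d}\xi_2).
\]
I would first rewrite the commutator as $\widehat{\mathsf d}\xi=[\mathsf d,\xi]$, the graded commutator. Then observe that $\mathsf d^X$ on $X^*(e_1,\dots,e_{i-1},f_1,\dots,f_m,e_{i+1},\dots,e_n)$ satisfies
\[
[\mathsf d,\ \id^{\otimes(i-1)}\otimes\xi_2\otimes\id^{\otimes(n-i)}]=\id^{\otimes(i-1)}\otimes[\mathsf d,\xi_2]\otimes\id^{\otimes(n-i)}.
\]
This holds with the Koszul convention because $\mathsf d$ acts on each tensor factor as a derivation. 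The graded Leibniz identity $[\mathsf d,f\circ g]=[\mathsf d,f]\circ g+(-1)^{|f|}f\circ[\mathsf d,g]$ for composition of graded maps then gives the claim.

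The sign check in this last step is the main obstacle. Specifically, one must confirm that \eqref{widehat_d_eq}, applied elementwise, agrees with the commutator form once the Koszul signs from $\xi_2$ passing $x_1,\dots,x_{i-1}$ are included. I would verify this directly on elements $x_1\otimes\cdots\otimes x_{n+m-1}$, tracking the exponent $|\xi_2|(|x_1|+\cdots+|x_{i-1}|)$. All the other conditions are routine.
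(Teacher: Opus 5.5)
Your proposal is correct and follows the paper's proof. It uses the same identity 2-cells $\id_{X(e)}$, the same Koszul-signed insertion (which reproduces the sign in \eqref{End_X_hat_composition}), and the same check $\widehat{\mathsf d}(\id_{X(e)})=0$. Where the paper only says the Leibniz rule follows "by routine computation," your graded-commutator argument supplies that computation, and your check $\widehat{\mathsf d}^2=0$ fills in a step the paper leaves implicit.
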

	
	\begin{proof}
		Following \eqref{2_cell_eq} and \eqref{2_cell_End}, a 2-cell $\xi$ is described by a rectangle as follows:
		\begin{equation*}
			\xymatrix{ v_n \ar@{.}[d] \ar@{<-}[rr]^{X(e_n)} & & v_{n-1} \ar@{<-}[rr]^{X(e_{n-1})} & & & \cdots & {}\ar@{<-}[rr]^{X(e_1)} & &  v_0   \ar@{.}[d]   \\
				v'_1 \ar@{<-}[rrrrrrrr]_{X(e')} & &  & & \ar@{}[u]|{\Downarrow \  \xi }&  &  & & v'_0
			}
		\end{equation*}
		where $e_i$ is an edge from $v_{i-1}$ to $v_i$ and $e'$ is an edge from $v_0'$ to $v_1'$.
		Note that the vertical 1-cell is simply a point in $V\times V$, so we present it as a dashed line here.
		
		The identity 2-cell over $e\in E$ is just the identity map $\id_{X(e)}$. The degree is $|\id_{X(e)}|=0$, and 
		\[
		\widehat{\mathsf d} (\id_{X(e)}) x = \mathsf d_e x - \id_{X(e)} (\mathsf d_{e} x) =0
		\]
		as required in Definition \ref{dg_fc_defn} (3).
		
		The composition for the 2-cells can be illustrated as follows:
		\[
		\xymatrix{
			v_n  \ar@{.}[d]  \ar@{}[dr]|{\Downarrow \  \id } & {\quad} \ar[l] & u_m \ar@{.}[d]  &  \cdots  \ar[l]_{X(f_m)} \ar@{}[d]|{\Downarrow \  \xi_2 } & u_0 \ar[l]_{X(f_1)} \ar@{.}[d]  & \quad &  v_0 \ar@{.}[d]  \ar[l] \ar@{}[dl]|{\Downarrow \  \id } \\
			v_n \ar@{.}[d]  & \cdots \ar[l] & v_i  \ar[l] &  & v_{i-1} \ar[ll] & \cdots \ar[l] & \ar[l] v_0 \ar@{.}[d]   \\
			v_1'  & & & \ar@{}[u]|{\Downarrow \  \xi_1}  & & & v_0' \ar[llllll]^{X(e')}
		}
		\]
		Specifically, given $\xi_1 \in\widehat{\mathbf{End}}(X)(e_1,\dots, e_n; e')$ and $\xi_2 \in \widehat{\mathbf{End}}(X)(f_1,\dots, f_n; e_i)$ for a fixed $1\le i\le n$, we define
		\begin{equation}
			\label{End_X_hat_composition}
			\begin{aligned}
				& \xi_1\circ_i \xi_2  \ (x_1\otimes \cdots \otimes x_{i-1} \otimes y_1\otimes \cdots \otimes y_m \otimes x_{i+1}\otimes \cdots \otimes x_n) \\ 
				& \qquad \qquad =
				(-1)^{|\xi_2| (|x_1|+\cdots+|x_{i-1}|)}
				\xi_1 (x_1\otimes \cdots \otimes x_{i-1} \otimes \xi_2(y_1\cdots y_m) \otimes x_{i+1}\otimes \cdots \otimes x_n)
			\end{aligned}
		\end{equation}
		as the induced cochain map
		\[
		X(e_1)\otimes \cdots \otimes X(e_{i-1})\otimes X(f_1)\otimes \cdots \otimes X(f_m) \otimes X(e_{i+1})\otimes \cdots \otimes X(e_n) \to X(e') .
		\]
		By routine computation, one can eventually check Definition \ref{dg_fc_defn} (4), that is, 
		\[
		\widehat{\mathsf d}(\xi_1\circ_i \xi_2) = \widehat{\mathsf d} \xi_1 \circ \xi_2   + (-1)^{|\xi_1|} \xi_1 \circ \widehat{\mathsf d}\xi_2
		\]
	\end{proof}

\subsection{Algebras over $\mathbf{fc}$-multicategories: dg variants}
	Now, we introduce a dg analogue of Definition \ref{C_alg_defn_End}:
	
	\begin{defn}
		\label{C_alg_defn_End_dg}
		Let $\widehat{\mathscr M}=(\mathscr M; \delta)$ be a dg $\mathbf{fc}$-multicategory.
		A \textit{$\widehat{\mathscr M}$-algebra} is defined as a pair $(X,\pmb \alpha)$ with
		\vspace{2pt}
		\begin{itemize}
			\itemsep 4pt
			\item a simple directed graph $X=(V, X, s\circ p_X, t\circ p_X, p_X)$ over the directed graph $E=(V,E,s,t)$ so that
			$X(e)=p_X^{-1}(e)$ is a cochain complex for $e\in E$ with the differential denoted by $\mathsf d_e=\mathsf d_e^X$.
			
			\item a map of dg $\mathbf{fc}$-multicategories $\pmb\alpha: \widehat{\mathscr M} \to \widehat{\mathbf{End}}(X)$ such that the induced map of directed graphs $(V,E)\to (V,E)$ is the identity. 
		\end{itemize}
	\end{defn}

	\vspace{5pt}
	
	Let's unpack the data of a $\widehat{\mathscr M}$-algebra $(X,\pmb\alpha)$ for a dg $\mathbf{fc}$-multicategory $\widehat{\mathscr M} = (\mathscr M; \delta)$ where $\mathscr M=(V,E, \mathcal V, \mathcal M)$.
Let's write $\pmb\alpha= (\id_V,\id_E, \bar\alpha, \alpha)$ by Definition \ref{map_fc_defn} \& \ref{map_fc_dg_defn}.
Recall that by Proposition \ref{End_E_X_widehat_prop}, the vertex set of $\widehat{\mathbf{End}}(X)$ is $V\times V$.
For $v, v'\in V$, the image of a vertical 1-cell $g\in\mathcal V$ from $v$ to $v'$ under $\bar \alpha$ is simply $(v,v')$; in other words, $\bar\alpha= (\bar d, \bar c): \mathcal V\to V\times V$ where $d,c$ are the vertex parts of $\pmb d, \pmb c$.
For $\vec e=(e_1,\dots, e_n)\in E^*$ and $e'\in E$, the image of each 2-cell $\mathbf u$ over $(\vec e, e')\in E^*\times E$ under $\alpha$ is a cochain map denoted by
	\[
	\alpha(\mathbf u) : X(e_1)\otimes \cdots \otimes X(e_n) \to X(e')
	\]
By Definition \ref{map_fc_dg_defn}, each component
	\[
	\alpha =\alpha(e_1,\dots, e_n; e'): \mathcal M(e_1,\dots, e_n; e') \to \widehat{\mathbf{End}}(X) (e_1,\dots, e_n; e')
	\]
	is a cochain map, that is,
	\begin{equation}
		\label{alpha_u_delta_eq}
		\widehat{\mathsf d} \alpha(\mathbf u) = \alpha (\delta \mathbf u)
	\end{equation}
Note also that 
\begin{equation}
	\label{alpha_u_composition}
	\alpha (\mathbf u\circ_i \mathbf u') = \alpha(\mathbf u) \circ_i \alpha(\mathbf u')
\end{equation}
where the partial composition on the right hand side is given in \eqref{End_X_hat_composition}.

	\section{$A_\infty$-type structures as algebras over dg $\mathbf{fc}$-multicategories}
\label{s_Ainf_type}

This section forms a major portion of the paper. Its purpose is to collect a large range of $A_\infty$-type structures appearing in the literature and recast them in a unified conceptual framework, namely as algebras over dg $\mathbf{fc}$-multicategories in the sense of Definition~\ref{C_alg_defn_End_dg}.
That is to say, we aim to extend the operadic description \eqref{operad_A_inf_intro_eq} to general $A_\infty$-type structures.
In particular, the constructions and descriptions below will resolve Theorem \ref{dg_fc_thm_intro}.

	\subsection{$A_\infty$ algebras}
	\label{s_A_inf_algebra}
	
	We begin with a warm-up case.
	Consider the \textit{$A_\infty$ operad }$\mathcal A_\infty=\{\mathcal A_\infty(n)\}_{n\ge 0}$. 
	Recall that $\mathcal A_\infty$ as an operad is freely generated by elements $\mathbf m_n$ with degree $|\mathbf m_n|=1$ for $n\ge 2$; see \cite{markl1998homotopy}.
	While the usual convention is that the degree of $\mathbf m_n$ should be $2-n$, one could use the so-called \textit{shifted degree} as in \cite{FOOOBookOne,Yuan_I_FamilyFloer,Yuan_2024open} to make $|\mathbf m_n|=1$.	
	The differential is determined on generators by
	\[
	\delta (\mathbf m_n)\;=\;-\!\!\sum_{\substack{r+s+t=n; \ r,s,t\ge 0}}
	\, \mathbf m_{r+1+t}\circ_{r+1} \mathbf m_s,
	\]
	for $n\ge 2$ and extended to all composites by the operadic Leibniz rule (\ref{operadic_leibniz_rule}).

	By Example \ref{dg_operad_as_dg_fc_mult_ex}, the dg operad $(\mathcal A_\infty,\delta)$ can be regarded as a dg $\mathbf {fc}$-multicategory $\Sigma\mathcal A_\infty$.
	Nonetheless, as mentioned in Section \ref{s_additional_grading}, incorporating an additional grading is often useful in symplectic applications. Accordingly, we follow Definition \ref{S_labeled_multicategory_defn} to formulate the following variant:
	
	Let $(S, + ,\theta)$ be a commutative monoid.
	We define the \textit{$S$-labeled $A_\infty$ operad} (cf. Definition \ref{S_labeled_multicategory_defn})
	\[
	\mathcal A_\infty^S = \{ \mathcal A^S_\infty(n,\beta)\}_{n\ge 0,\beta\in S}
	\]
	as follows: it is freely generated by symbols $\mathbf m_{n,\beta}$ with $|\mathbf m_{n,\beta}|=1$ for $(n,\beta)\neq (0,\theta), (1,\theta)$.
	The differential is decided on generators by
	\begin{equation}\label{eq:Ainf-d}
		\delta (\mathbf m_{n,\beta})\;=\;-\!\!\sum_{\substack{r+s+t=n ; \ r, s, t \ge 0 \\ \beta'+\beta''=\beta}}
		\, \mathbf m_{r+1+t,\beta'}\circ_{r+1} \mathbf m_{s,\beta''},
	\end{equation}
	and extended by the operadic Leibniz rule. This is a differential since one can check
	\[
	\delta^2(\mathbf m_{n,\beta}) = - \sum \delta (\mathbf m_{r+1+t,\beta'})\circ_{r+1} \mathbf m_{s,\beta''} -(-1)^{|\mathbf m|} \ \mathbf m_{r+1+t,\beta'} \circ_{r+1} \delta(\mathbf m_{s,\beta''}) =0
	\]
	
	Set $\mathcal A_\infty^S(n)=\bigoplus_{\beta\in S} \mathcal A_\infty^S(n,\beta)$, and we can view $\mathcal A_\infty^S$ as a dg $\mathbf{fc}$-multicategory from Example \ref{dg_operad_as_dg_fc_mult_ex}; we can also view the monoid $S$ as a $\mathbf{fc}$-multicategory from Example \ref{ex_S_E_labeling} \& \ref{ex_trivial_labeling} so that $\mathcal A_\infty^S$ is an $S$-labeled $\mathbf{fc}$-multicategory in the sense of Definition \ref{label_by_fc_multicategory_defn}.

\begin{defn}
\label{Ainf_alg_S_labeled_defn}
		By an \textit{$S$-labeled $A_\infty$ algebra}, we mean a $\mathcal A_\infty^S$-algebra $(X,\pmb\alpha)$ in Definition \ref{C_alg_defn_End_dg}.
\end{defn}

	Specifically, since the 0-cell set and 1-cell set of $\mathcal A_\infty^S$ are one-point sets and since $X$ is simple, we note that 
	$X$ can be just regarded as a single cochain complex whose differential is denoted by $\mathsf d=\dd^X$. Also, $\widehat{\mathbf{End}}(X)$ can be identified with the usual endomorphism dg operad consisting of cochain maps $X^{\otimes \bullet} \to X$.

	Define $\m_{1,\theta}=\mathsf d$ and 
\begin{equation}
\label{m_n_beta_Ainf_alg_eq}
	\m_{n,\beta} :=  \alpha (\mathbf m_{n,\beta})  \ : \ \ X^{\otimes n} \to X
\end{equation}
for $n\ge 0$ and $\beta\in S$ with $(n,\beta)\neq (1,\theta)$.
	When $n=0$, we conventionally set $X^{\otimes n}$ to be the ground field, and $\m_{0,\beta}$ is identified with an element in $X$.
	By definition, the degree of $\alpha$ is zero, and thus $|\m|=|\m_{n,\beta}|=1$.
	
	The condition (\ref{alpha_u_delta_eq}) produces:
	\begin{align*}
		\alpha (\delta \mathbf m_{n,\beta}) (x_1,\dots, x_n)
		&=
		\dd (\m_{n,\beta} (x_1,\dots, x_n)) - \sum_{i=1}^n (-1)^{|\m|+|x_1|+\cdots+|x_{i-1}|} \  \m_{n,\beta} (x_1,\dots,  \dd x_i, \dots, x_n) \\
		&=
		\m_{1,0} (\m_{n,\beta} (x_1,\dots, x_n)) + \sum_{i=1}^n (-1)^{|x_1|+\cdots+|x_{i-1}|} \  \m_{n,\beta} (x_1,\dots,  \m_{1,0}( x_i), \dots, x_n)
	\end{align*}
	Using \eqref{eq:Ainf-d}, \eqref{alpha_u_composition}, and \eqref{End_X_hat_composition}, we obtain
	\begin{align*}
		& \quad \alpha (\delta \mathbf m_n) (x_1,\dots, x_n) \\
		&=
		-\sum \alpha(\mathbf m_{r+1+t,\beta'} \circ_{r+1} \mathbf m_{s,\beta''} ) (x_1,\dots, x_n) 
		= -\sum (\m_{r+1+t,\beta'} \circ_{r+1} \m_{s,\beta''})  (x_1,\dots, x_n)  \\
		&=
		-\sum
		(-1)^{|\m|(|x_1|+\cdots+|x_r|)}
		\m_{ r+1+t,\beta'} (x_1,\dots, x_r,  \m_{s,\beta''} (x_{r+1},\dots, x_{r+s}) , \dots, x_n)
	\end{align*}
	where the right hand side does not involve $\m_{1,\theta}$ by construction.
	Putting them together, we exactly obtain the $A_\infty$ relations (with labels): for each $\beta\in S$ and $n\ge 0$,
	\begin{equation}
		\label{A_inf_alg_relation}
		\begin{aligned}
			\sum_{\beta'+\beta''=\beta} \sum_{r+s+t=n} (-1)^{|x_1|+\cdots+|x_r|}
			\m_{r+1+t,\beta'} (x_1,\dots, x_r, \m_{s,\beta''}(x_{r+1},\dots, x_{r+s}),\dots, x_n) = 0
		\end{aligned}
	\end{equation}

\begin{rmk}
\label{curved_Ainf_algebra_rmk}
When $S$ is the trivial monoid, the operad $\mathcal A_\infty^{S}$ recovers the usual $A_\infty$ operad in \eqref{operad_A_inf_intro_eq}.
In the standard literature \cite{FOOOBookOne}, the notion of a \textit{filtered $A_\infty$ algebra} can be viewed as an $S$-labeled $A_\infty$ algebra, where $S\subset \mathbb R_{\ge 0}$ is a \textit{nontrivial} discrete additive monoid.
\end{rmk}

\subsection{$A_\infty$ categories}
\label{s_Ainf_categories}

Let $V$ be a set. Let $E=(V,E,s,t)$ be the directed graph whose vertex set is $V$ and whose edge set is
$E=V\times V$,
so that for each ordered pair $(v,v’)\in V\times V$ there is a unique edge $e_{v,v'}$ from $v$ to $ v'$.
For each $v\in V$ we distinguish the loop $e_v:=e_{v,v}$. Let $\mathbb S$ be any fixed $\mathbf{fc}$-multicategory whose underlying directed graph is the given $E=(V,E)$.

We will construct a vertically discrete $\mathbb S$-labeled dg $\mathbf{fc}$-multicategory
\begin{equation}
\label{A_inf_V_cat_eq}
\mathcal A_{\infty,V}^{\mathbb S} := \mathcal A_{\infty, V\times V}^{\mathbb S}=(V,E=V\times V,\mathcal M;\delta)
\end{equation}
whose underlying directed graph of $0$-cells and horizontal $1$-cells is $(V,E)$.
We define $\mathcal M$ to be freely generated by symbols $\mathbf m_\beta \in \mathcal M(\vec e; e')$, for each $(\vec e; e')\in E^*\times E$ and each $\beta\in \mathbb S(\vec e; e')$, \textit{excluding the unit case $\beta=\theta_e\in \mathbb S(e; e)$}.
We require the degree of each $\mathbf m_\beta$ is one, and we define the differential on $\mathcal M$ by
\begin{equation}
\label{eq:Ainf-category_d}
\delta(\mathbf m_\beta)\;=\;-\sum \mathbf m_{\beta'} \circ_i \mathbf m_{\beta''}
\end{equation}
where the sum ranges over all possible decompositions $\beta=\beta'\circ_i \beta''$ for some $i$ and some $\beta'$,$\beta''$.

\begin{defn}
\label{Ainf_cat_S_labeled_defn}
An \textit{$\mathbb S$-labeled $A_\infty$ category} with object set $V$ is an $\mathcal A_{\infty, V}^{\mathbb S}$-algebra (Definition~\ref{C_alg_defn_End_dg}).
\end{defn}

In the special case $V$ is a one-point set, the equation \eqref{eq:Ainf-category_d} exactly retrieves \eqref{eq:Ainf-d}, and the above may also recover Definition \ref{Ainf_alg_S_labeled_defn}.

\begin{rmk}
In the literature, an $A_\infty$-type structure is called \textit{curved} (respectively \textit{uncurved}) depending on whether one allows (respectively forbids) operations with empty input. In our terminology, this corresponds to whether $\mathbb S(\varnothing; e')$ is allowed to be nonempty (respectively required to be empty).
\end{rmk}

Let $\mathcal A_{\infty, V}^{red}$ denote the dg $\mathbf{fc}$-multicategory in \eqref{A_inf_V_cat_eq} when $\mathbb S=\pmb 0^{red}_E$ in Example \ref{ex_trivial_labeling}, where $E=V\times V$ as above. In particular, by definition, $\mathbb S(\varnothing; e')$ and thus $\mathcal M(\varnothing; e')$ are empty.
If instead we take $\mathbb S=\pmb 0_E$ and write the resulting dg $\mathbf{fc}$-multicategory in \eqref{A_inf_V_cat_eq} as $\mathcal A_{\infty,V}$, then we propose to define the notion of a \textit{curved} $A_\infty$ category with object set $V$ as an $\mathcal A_{\infty,V}$-algebra. In contrast, the notion of $\mathcal A_{\infty,V}^{\mathrm{red}}$-algebras recover the (uncurved) notion of $A_\infty$ category, as shown in the next theorem.

\begin{prop}
\label{standard_A_inf_cat_prop}
An $\mathcal A_{\infty,V}^{red}$-algebra is equivalent to an (uncurved) $A_\infty$ category with object set $V$ (e.g. \cite[(1a)]{SeidelBook}).
\end{prop}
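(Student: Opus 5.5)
The plan is to unpack Definition~\ref{C_alg_defn_End_dg} for $\widehat{\mathscr M}=\mathcal A_{\infty,V}^{red}$ and match the resulting data and relations term by term with Seidel's definition. We first describe $\mathcal A_{\infty,V}^{red}$ concretely. Since $\mathbb S=\pmb 0^{red}_E$ with $E=V\times V$, we have $\mathbb S(\vec e;e')$ a singleton exactly when $(\vec e;e')$ is a profile-loop with $\vec e\neq\varnothing$, and empty otherwise. For $\vec e\in E^*$ of length $n\ge1$ with vertices $v_0,\dots,v_n$, the only compatible output edge is $e'=e_{v_0,v_n}$, because $E=V\times V$ has a unique edge between any ordered pair of vertices. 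Hence the generators are indexed by finite vertex sequences $(v_0,\dots,v_n)$ with $n\ge1$, excluding the unit case $n=1$ (since $\theta_e$ is removed). Write $\mathbf m_{v_0,\dots,v_n}$ for the generator of degree one. Decompositions $\beta=\beta'\circ_i\beta''$ in the trivially labeled $\pmb 0^{red}_E$ correspond exactly to splittings of the vertex sequence into a consecutive block $(v_r,\dots,v_{r+s})$ with $s\ge1$ and the complementary sequence $(v_0,\dots,v_r,v_{r+s},\dots,v_n)$. Formula \eqref{eq:Ainf-category_d} therefore reads
\[
\delta\mathbf m_{v_0,\dots,v_n}=-\sum \mathbf m_{v_0,\dots,v_r,v_{r+s},\dots,v_n}\circ_{r+1}\mathbf m_{v_r,\dots,v_{r+s}},
\]
where the sum excludes the terms in which either factor would be a unit.

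Next we unpack the algebra. A simple $X$ over $E$ amounts to a choice of cochain complex $\hom(v,v'):=X(e_{v,v'})$ for each pair, with differential $\mathsf d_{v,v'}$. Since $\mathcal M$ is free, a map $\pmb\alpha$ of the underlying $\mathbf{fc}$-multicategories (identity on $V$ and $E$, sending units to identities by Definition~\ref{map_fc_defn}(1)) is uniquely determined by arbitrary degree-one images
\[
\mu_n:=\alpha(\mathbf m_{v_0,\dots,v_n}):\hom(v_0,v_1)\otimes\cdots\otimes\hom(v_{n-1},v_n)\to\hom(v_0,v_n),\qquad n\ge2,
\]
together with $\mu_1:=\mathsf d$. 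Compatibility with compositions \eqref{alpha_u_composition} then holds automatically by freeness. The remaining condition is the cochain-map condition \eqref{alpha_u_delta_eq}, and it suffices to check it on generators, because both $\delta$ and $\widehat{\mathsf d}$ satisfy the Leibniz rule with respect to $\circ_i$ (Definition~\ref{dg_fc_defn}(4) and Proposition~\ref{End_E_X_widehat_prop}). On a generator, the computation is verbatim the one carried out for $A_\infty$ algebras in \S\ref{s_A_inf_algebra}. The left side $\widehat{\mathsf d}\mu_n$ supplies exactly the terms $\mu_1\circ\mu_n$ and $\mu_n\circ_{i}\mu_1$. The right side, computed via \eqref{End_X_hat_composition}, supplies all the remaining terms $\mu_{r+1+t}\circ_{r+1}\mu_s$ with $2\le s\le n-1$. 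Together these give the $A_\infty$ relation \eqref{A_inf_alg_relation}, with no labels and with composable inputs. For $n=1$ the corresponding statement is $\mathsf d^2=0$, which is part of the data. Conversely, given an $A_\infty$ category, we define $\alpha$ on generators by the $\mu_n$ and extend freely; the same computation shows that $\alpha$ is a dg map. The two constructions are mutually inverse.

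The main obstacle is the sign and grading comparison with \cite[(1a)]{SeidelBook}. Our $\mu_n$ have shifted degree one and act on tensors ordered from $v_0$ to $v_n$, whereas Seidel uses the unshifted degree $2-n$, the reversed order $\hom(v_{n-1},v_n)\otimes\cdots\otimes\hom(v_0,v_1)$, and the sign $\maltese=\sum(|a_j|-1)$. We would handle this by the standard degree shift $\hom\mapsto\hom[1]$ together with a reordering of factors, and then verify that the signs $(-1)^{|x_1|+\cdots+|x_r|}$ in \eqref{A_inf_alg_relation} correspond to Seidel's $(-1)^{\maltese_r}$ after the shift, exactly as in the standard equivalence for $A_\infty$ algebras in \cite{FOOOBookOne}. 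A secondary point to check is that the exclusion of empty inputs, coming from $\pmb 0^{red}_E$, removes all curvature terms $\mu_0$. This holds because $\circ_i$ never produces elements of $\mathcal M(\varnothing;e')$, by Example~\ref{ex_S_E_labeling_reduced}.
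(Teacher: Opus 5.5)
Your proposal is correct and follows essentially the same route as the paper's proof: you index the generators by vertex sequences $(v_0,\dots,v_n)$ with $n\ge 2$, set $\mu_1=\mathsf d$ and $\mu_n=\alpha(\mathbf m_{v_0,\dots,v_n})$, and read off the $A_\infty$ relations by comparing $\widehat{\mathsf d}\,\alpha(\mathbf m_{v_0,\dots,v_n})$ with $\alpha(\delta\mathbf m_{v_0,\dots,v_n})$ via the composition sign rule. Your additional points spell out steps the paper leaves implicit: that the cochain-map condition need only be checked on generators (by the two Leibniz rules), the converse direction via freeness, and the translation to Seidel's tensor ordering and sign conventions.
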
	

\begin{proof}
Let $(X,\pmb\alpha)$ be an $\mathcal A^{red}_{\infty, V}$-algebra.
By Definition \ref{C_alg_defn_End_dg}, the data $X$ consists of cochain complexes $X(v,v')$ with the differential denoted by $\mathsf d_{v,v'}$ for every $(v,v')\in E=V\times V$.
By Example \ref{ex_trivial_labeling}, the set $\pmb 0^{\mathrm{red}}_E(\vec e; e')$ is either empty or a singleton, and the latter occurs if and only if $\vec e \neq \varnothing$.
Therefore, the generators $\mathbf m_\beta$ of $\mathcal A_{\infty, V}$ can equivalently be indexed as $\mathbf m_{\vec e; e'}$ for all $\vec e\in E^*$ and $e'\in E$ where $\vec e\in E^*$ and $e'\in E$ satisfy $\vec e\neq\varnothing$ and the source and target of $\vec e$ agree with those of $e'$.
Here by definition, we also need to require $\vec e\neq e'$ to exclude the unit case.
Since $E=V\times V$, it follows from \eqref{E*_eq} that
\[
E^* = \bigsqcup_{n} \left\{\Big( (v_0,v_1),(v_1,v_2),\dots, (v_{n-1},v_n) \Big) \in (V\times V)^{\times n} \mid v_0,\dots, v_n\in V  \right\} \cong \bigsqcup_n V^{\times (n+1)} \ .
\]
Accordingly, the generators can be further indexed as $\mathbf m_{\vec v}$ for $\vec v=(v_0,v_1,\dots, v_n) \in V^{\times (n+1)}$, where we need to require $n\neq 1$ to exclude the unit case.

We put $\mu_1=\mu_{1; v_0,v_1}=\mu_{v_0,v_1} =\dd_{v_0,v_1}$ for all $v_0,v_1\in V$.
By definition, the data $\pmb\alpha$ also consists of cochain maps
\[
\mu_n= \mu_{n; \vec v} = \mu_{\vec v} = \alpha (\mathbf m_{\vec v}) : X(v_0,v_1)\otimes \cdots \otimes X(v_{n-1},v_n) \to X(v_0,v_n) 
\qquad n \ge 2
\]
Finally, it remains to recover the standard $A_\infty$ relation. 
In reality, we fix $\vec v=(v_0,v_1,\dots, v_n)$ with $n \ge 2$ and and $x_k\in X(v_{k-1},v_k)$ for $k=1,\dots, n$.
By \eqref{alpha_u_delta_eq} and \eqref{widehat_d_eq}, we compute
\begin{equation}
\label{k=2,n+1_eq}
	\begin{aligned}
&\alpha(\delta \mathbf m_{\vec v}) (x_1,\dots, x_n) = (\widehat{\dd} \alpha) (\mathbf m_{\vec v}) \\
& = \mu_1 \left( \mu_{n} (x_1,\dots, x_n) \right)  + \sum_{k=1}^n (-1)^{|x_1|+\cdots+|x_{k-1}|} \ \mu_{n} (x_1,\dots, x_{k-1}, \mu_1 (x_k) ,x_{k+1},\dots, x_n) 
\end{aligned}
\end{equation}
In our case, the relation \eqref{eq:Ainf-category_d} becomes 
\[
\delta \mathbf m_{\vec v} = - \sum_{1\neq k\neq n} \sum_j \mathbf m_{v_0,v_1,\dots, v_{j-1},v_{j+k+1},\dots, v_n} \circ_{j+1} \mathbf m_{v_{j},\dots, v_{j+k}}
\]
By \eqref{alpha_u_composition} and \eqref{End_X_hat_composition}, we have
\begin{align*}
-\alpha(\delta\mathbf m_{\vec v}) (x_1,\dots, x_n) 
&= \sum_{1\neq k\neq n} \sum_j (\mu_{n-k} \circ_{j+1} \mu_{k+1} ) (x_1,\dots, x_n) \\
&= \sum_{1\neq k\neq n} \sum_j (-1)^{|x_1|+\cdots+|x_{j-1}|} \  \mu_{n-k}  (x_1,\dots, x_{j-1} , \mu_{k+1}(x_{j},\dots, x_{j+k}),\dots, x_n)
\end{align*}
Adding this with (\ref{k=2,n+1_eq}), we obtain the desired $A_\infty$ relation:
\[
\sum_{ k} \sum_j (-1)^{|x_1|+\cdots+|x_{j-1}|} \  \mu_{n-k}  (x_1,\dots, x_{j-1} , \mu_{k+1}(x_{j},\dots, x_{j+k}),\dots, x_n) = 0
\]
\end{proof}

\subsection{Generalized $A_\infty$ categories}
\label{s_generalized_Ainf}
In Section \ref{s_Ainf_categories}, we fix the edge set to be $E = V \times V$. However, this assumption is not necessary in general: one may allow a more general directed graph $E$ with the vertex set $V$. In such cases, $A_\infty$-type structures can still arise, for which we temporarily use the name "generalized $A_\infty$ categories". The language of $\mathbf{fc}$-multicategories and their algebras may provide a natural framework for describing such generality.
More precisely, we introduce:

\begin{defn}
\label{generalized_Ainf_defn}
Given an arbitrary directed graph $E=(V,E)$ and an $\mathbf{fc}$-multicategory $\mathbb S$ whose underlying directed graph is $E$, the constructions in \eqref{A_inf_V_cat_eq} and \eqref{eq:Ainf-category_d} apply in the same way. We denote the resulting $\mathbf{fc}$-multicategory by $\mathcal A_{\infty,E}^{\mathbb S}$.
When $\mathbb S=\pmb 0_E$ as in Example \ref{ex_trivial_labeling}, we also denote it by $\mathcal A_{\infty, E}$.
\end{defn}

Our basic claim is that many different $A_\infty$-type structures in the literature can be realized as algebras over $\mathcal A_{\infty,E}$ for suitable choices of the directed graph $E$.

\subsection{Left and right $A_\infty$ modules}
Let $E=(V,V\times V)$ be the directed graph from Section \ref{s_Ainf_categories}.
Define a new directed graph
\[
E_{l} := (V\sqcup \{\ast\} , (V\times V) \sqcup (\{\ast\} \times V) )
\]
Concretely, we add a new vertex denoted by $\ast$, and then for each $v\in V$, we add a new directed edge from $\ast$ to $v$.
However, note that there is no loop at the vertex $\ast$.
Similarly, define a directed graph
\[
E_{r} := (V\sqcup \{\ast'\} , (V\times V) \sqcup (V\times \{\ast'\}) )
\]
Here we add a new vertex $\ast'$ and for each $v\in V$, we add a new directed edge from $v$ to $\ast'$.

By Definition \ref{generalized_Ainf_defn}, we introduce the dg $\mathbf{fc}$-multicategories:
\[
\mathcal A_{\infty,V, l}:= \mathcal A_{\infty, E_{l}} \quad \text{and}\quad
\mathcal A_{\infty,V, r}:= \mathcal A_{\infty, E_{r}}
\] 
The similar arguments can imply the following:

\begin{prop}
An $\mathcal A_{\infty,V,l}$-algebra (resp. $\mathcal A_{\infty,V,r}$-algebra) is equivalent to the standard notion of a left (resp. right) $A_\infty$ module over an $A_\infty$ category with object set $V$ in the literature (see e.g. \cite[Definitions 2.9 \& 2.10]{Ganatra_thesis}).
\end{prop}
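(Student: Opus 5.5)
We follow the proof of Proposition~\ref{standard_A_inf_cat_prop} almost verbatim, treating only $\mathcal A_{\infty,V,l}$; the right case is symmetric with the new edges reversed. One point needs attention first. By Definition~\ref{generalized_Ainf_defn}, $\mathcal A_{\infty,E}$ uses $\mathbb S=\pmb 0_E$, which allows empty inputs (curved operations). But $E_l$ has no loop at $\ast$ and no loops $e_v$ are involved in $(\varnothing;e')$ except the loops at $v\in V$. We therefore check whether the intended comparison is with uncurved modules. If so, we work with the reduced version $\pmb 0^{red}_{E_l}$, as was done for categories. We state this convention at the start; the curved case is identical, with extra $\mu_0$ terms.

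\textbf{Step 1: profile-loops of $E_l$.} A path $\vec e\in E_l^*$ either lies in $V\times V$, or begins at $\ast$. A path cannot pass through $\ast$ in its middle, because no edge enters $\ast$. So a path starting at $\ast$ has the form $((\ast,v_1),(v_1,v_2),\dots,(v_{n-1},v_n))$. A profile-loop $(\vec e;e')$ then falls into one of two types:
\begin{itemize}
\item[(a)] $\vec e$ lies in $V\times V$ and $e'=(v_0,v_n)$;
\item[(b)] $\vec e$ starts with $(\ast,v_1)$ and $e'$ is the unique edge $(\ast,v_n)$.
\end{itemize}
There is exactly one edge between any ordered pair of endpoints, so generators are indexed by vertex strings. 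Type (a) gives $\mathbf m_{v_0,\dots,v_n}$, which are the category generators. Type (b) gives $\mathbf n_{\ast,v_1,\dots,v_n}$ for $n\ge1$, with the identity case $n=1$ excluded.

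\textbf{Step 2: the algebra data.} A simple algebra $X$ assigns the following cochain complexes:
\begin{itemize}
\item $X(v,v')=\hom(v,v')$ to the edges of $V\times V$;
\item $M(v):=X(\ast,v)$ to the new edges.
\end{itemize}
The map $\alpha$ sends $\mathbf m_{\vec v}$ to the operations $\mu_n$ and $\mathbf n_{\ast,\vec v}$ to maps
\[
\mu^M_{n}:M(v_1)\otimes X(v_1,v_2)\otimes\cdots\otimes X(v_{n-1},v_n)\to M(v_n),
\]
with $\mu^M_1$ equal to the differential of $M$.

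\textbf{Step 3: the relations.} The differential $\delta$ splits each generator over all ways of writing the profile-loop as a composite $\beta'\circ_i\beta''$. For type (a) this recovers the $A_\infty$ category relations, exactly as in Proposition~\ref{standard_A_inf_cat_prop}. For type (b) there are two kinds of decomposition:
\begin{itemize}
\item $\beta''$ occupies a block that does not contain the first input; it must then be of type (a), which gives the terms $\mu^M(\dots,\mu(\dots),\dots)$;
\item $\beta''$ occupies the initial block; it must then be of type (b), which gives the terms $\mu^M(\mu^M(m,\dots),\dots)$.
\end{itemize}
We combine \eqref{alpha_u_delta_eq}, \eqref{widehat_d_eq} and \eqref{End_X_hat_composition} as before, with the signs $(-1)^{|m|+|x_1|+\cdots}$ arising from \eqref{End_X_hat_composition}. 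The result is the left module relation of \cite{Ganatra_thesis}, in its convention for the order of inputs.

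\textbf{Main obstacle and converse.} The main obstacle is bookkeeping, not conceptual. We must match the order and direction conventions with Ganatra's definition, since a left module there may be written with the module input last. We handle this by reversing $E_l$, which is the same as swapping to $E_r$, and we adjust shifted-degree signs. For the converse, given module data we define $\alpha$ on the generators. Because the 2-cells are freely generated, $\alpha$ extends uniquely to a map compatible with composition. It is a cochain map because it commutes with $\delta$ on generators, and this is precisely the relation derived in Step 3.
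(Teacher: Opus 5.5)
Your Steps 1--3 and the freeness argument for the converse follow the route the paper intends. The paper gives no separate proof; it simply reruns the computation of Proposition~\ref{standard_A_inf_cat_prop} on the graph $E_l$ (resp.\ $E_r$). Your caveat about curvature is also well taken. With $\mathbb S=\pmb 0_{E_l}$ as in Definition~\ref{generalized_Ainf_defn}, there are empty-input generators at the loops $(v,v)$, $v\in V$, though none at the edges $(\ast,v)$. So the uncurved notion of \cite{Ganatra_thesis} is matched by the reduced labeling $\pmb 0^{red}_{E_l}$, exactly as for categories.

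The step you should discard is the fix in your last paragraph. Reversing $E_l$ (equivalently, passing to $E_r$) is not a change of notation, because the direction of the new edges is what encodes variance. With $M(v)=X(\ast,v)$, your Step~2 gives $\mu^M_n\colon M(v_1)\otimes X(v_1,v_2)\otimes\cdots\otimes X(v_{n-1},v_n)\to M(v_n)$. This is covariant in $v$ (modelled on $\hom(\ast,-)$), which is precisely a left module.

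The apparent mismatch with Ganatra's $\hom(X_{r-1},X_r)\otimes\cdots\otimes\hom(X_0,X_1)\otimes\mathcal M(X_0)\to\mathcal M(X_r)$ is only the order in which tensor factors are written. Proposition~\ref{standard_A_inf_cat_prop} already writes the category operations as $X(v_0,v_1)\otimes\cdots\otimes X(v_{n-1},v_n)$, the opposite of Seidel's and Ganatra's order. The correct dictionary is therefore to reverse all tensor factors uniformly, with the Koszul sign and the degree shift. That puts the module input last and yields Ganatra's left-module relation directly from $E_l$.

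If you instead reverse only the new edges, they point into $\ast'$, and an $\mathcal A_{\infty,E_r}$-algebra gives $X(v_0,v_1)\otimes\cdots\otimes X(v_{n-1},v_n)\otimes N(v_n)\to N(v_0)$ with $N(v)=X(v,\ast')$. That is a contravariant, i.e.\ right, module over the same category. If you reverse all edges, you obtain a right module over the opposite category. Either way you would not be proving the statement as posed, which pairs $\mathcal A_{\infty,V,l}$ with left modules and $\mathcal A_{\infty,V,r}$ with right modules. Keep $E_l$ as it is and apply the reversal only to the written tensor factors.
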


\subsection{$A_\infty$ bimodules over a pair of $A_\infty$ algebras}
\label{s_Ainf_bimodule_alg}
Consider the directed graph $E=(V,E,s,t)$ where $V=\{v_0,v_1\}$ is a two-element set and $E=\{e_0,e_1,e_{01}\}$ is a three-element set. Here $e_0$ and $e_1$ are loops based at $v_0$ and $v_1$, respectively, and $e_{01}$ is a directed edge from $v_0$ to $v_1$.
	Note that there is no directed edge from $v_1$ to $v_0$ (see Figure \ref{fig:A_inf_bimodule}).
	Let $\mathbb S=(V,E,\mathbb S)$ be a fixed $\mathbf{fc}$-multicategory with the same underlying directed graph $E=(V,E)$, where the collection of 2-cells is still denoted by $\mathbb S$. 
	
	We construct a vertically discrete $\mathbb S$-labeled dg $\mathbf{fc}$-multicategory (Definition \ref{label_by_fc_multicategory_defn}, \ref{dg_fc_defn})
	\begin{equation}
		\label{A_inf_2_bimodule_eq}
		\mathcal A_{\infty,\text{2-mod}}^{\mathbb S} := \mathcal A^{\mathbb S}_{\infty, \{e_0,e_1,e_{01}\}}=(V,E,\mathcal M,\delta)
	\end{equation}
as a special case of the one in Definition \ref{generalized_Ainf_defn}, by taking the directed graph to be $\{e_0,e_1,e_{01}\}$ described above.
Specifically,
the sets of 0-cells and horizontal 1-cells are respectively defined as $V=\{v_0,v_1\}$ and $E=\{e_0,e_1,e_{01}\}$.
A profile-loop (Definition \ref{profile_loop_defn}) in the graph $E$ can only have the following three possibilities:
	\begin{itemize}
		\itemsep 2pt
		\item  $ v_0\xleftarrow{e_0}\cdots  \xleftarrow{e_0}v_0$
		\item  $ v_1\xleftarrow{e_1}\cdots  \xleftarrow{e_1}v_1$
		\item 
		$ v_1\xleftarrow{e_1}\cdots  \xleftarrow{e_1}v_1
		\xlongleftarrow{\quad e_{01}\quad } v_0 \xleftarrow{e_0}v_0 \xleftarrow{} \cdots  \xleftarrow{e_0}v_0$
	\end{itemize}
	Therefore, by \eqref{M_decompose_profile_eq}, the non-empty parts of 2-cells have only three possible types as follows:
	\begin{align*}
		\mathcal M(\underbrace{e_0, \dots, e_0}_{n} ; e_0 ) , \qquad 
		\mathcal M(\underbrace{e_1, \dots, e_1}_{n} ; e_1 ) , \qquad  \mathcal M(\underbrace{e_0, \dots, e_0}_{n_0} \ , \ e_{01} \ , \ \underbrace{e_1, \dots, e_1}_{n_1} \ ; \ e_{01} )
	\end{align*}

	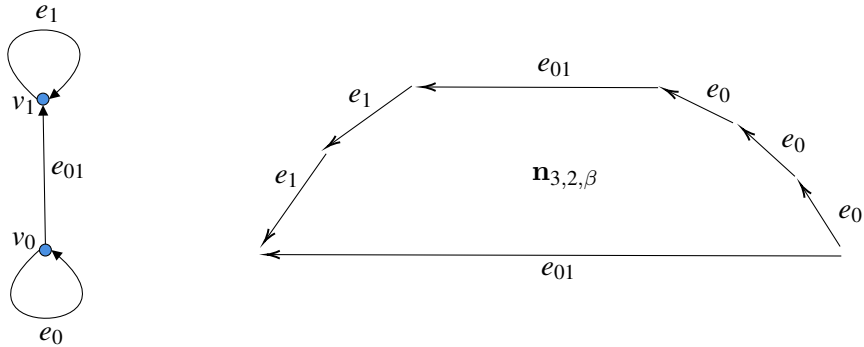
\begin{figure}[h]
	\centering

	\begin{tikzpicture}[x=0.75pt,y=0.75pt,yscale=-0.6,xscale=0.6]
		
		\draw    (771,255) -- (290.09,253.82) ;
		\draw [shift={(288.09,253.82)}, rotate = 0.14] [color={rgb, 255:red, 0; green, 0; blue, 0 }  ][line width=0.75]    (10.93,-3.29) .. controls (6.95,-1.4) and (3.31,-0.3) .. (0,0) .. controls (3.31,0.3) and (6.95,1.4) .. (10.93,3.29)   ;
		\draw    (770,247) -- (738.07,196.69) ;
		\draw [shift={(737,195)}, rotate = 57.6] [color={rgb, 255:red, 0; green, 0; blue, 0 }  ][line width=0.75]    (10.93,-3.29) .. controls (6.95,-1.4) and (3.31,-0.3) .. (0,0) .. controls (3.31,0.3) and (6.95,1.4) .. (10.93,3.29)   ;
		\draw    (732,188) -- (689.48,149.35) ;
		\draw [shift={(688,148)}, rotate = 42.27] [color={rgb, 255:red, 0; green, 0; blue, 0 }  ][line width=0.75]    (10.93,-3.29) .. controls (6.95,-1.4) and (3.31,-0.3) .. (0,0) .. controls (3.31,0.3) and (6.95,1.4) .. (10.93,3.29)   ;
		\draw    (680,143) -- (624.8,115.88) ;
		\draw [shift={(623,115)}, rotate = 26.16] [color={rgb, 255:red, 0; green, 0; blue, 0 }  ][line width=0.75]    (10.93,-3.29) .. controls (6.95,-1.4) and (3.31,-0.3) .. (0,0) .. controls (3.31,0.3) and (6.95,1.4) .. (10.93,3.29)   ;
		\draw    (617,113.41) -- (421.09,112.82) ;
		\draw [shift={(419.09,112.82)}, rotate = 0.17] [color={rgb, 255:red, 0; green, 0; blue, 0 }  ][line width=0.75]    (10.93,-3.29) .. controls (6.95,-1.4) and (3.31,-0.3) .. (0,0) .. controls (3.31,0.3) and (6.95,1.4) .. (10.93,3.29)   ;
		\draw    (410,112) -- (340.62,161.83) ;
		\draw [shift={(339,163)}, rotate = 324.31] [color={rgb, 255:red, 0; green, 0; blue, 0 }  ][line width=0.75]    (10.93,-3.29) .. controls (6.95,-1.4) and (3.31,-0.3) .. (0,0) .. controls (3.31,0.3) and (6.95,1.4) .. (10.93,3.29)   ;
		\draw    (338,169) -- (287.15,241.36) ;
		\draw [shift={(286,243)}, rotate = 305.1] [color={rgb, 255:red, 0; green, 0; blue, 0 }  ][line width=0.75]    (10.93,-3.29) .. controls (6.95,-1.4) and (3.31,-0.3) .. (0,0) .. controls (3.31,0.3) and (6.95,1.4) .. (10.93,3.29)   ;
		\draw  [fill={rgb, 255:red, 74; green, 144; blue, 226 }  ,fill opacity=1 ] (95,123) .. controls (95,120.24) and (97.24,118) .. (100,118) .. controls (102.76,118) and (105,120.24) .. (105,123) .. controls (105,125.76) and (102.76,128) .. (100,128) .. controls (97.24,128) and (95,125.76) .. (95,123) -- cycle ;
		\draw  [fill={rgb, 255:red, 74; green, 144; blue, 226 }  ,fill opacity=1 ] (97,250) .. controls (97,247.24) and (99.24,245) .. (102,245) .. controls (104.76,245) and (107,247.24) .. (107,250) .. controls (107,252.76) and (104.76,255) .. (102,255) .. controls (99.24,255) and (97,252.76) .. (97,250) -- cycle ;
		\draw    (100.05,131) -- (102,245) ;
		\draw [shift={(100,128)}, rotate = 89.02] [fill={rgb, 255:red, 0; green, 0; blue, 0 }  ][line width=0.08]  [draw opacity=0] (8.93,-4.29) -- (0,0) -- (8.93,4.29) -- cycle    ;
		\draw    (95,123) .. controls (5.45,44.39) and (196.08,46.97) .. (106.37,121.87) ;
		\draw [shift={(105,123)}, rotate = 320.74] [fill={rgb, 255:red, 0; green, 0; blue, 0 }  ][line width=0.08]  [draw opacity=0] (8.93,-4.29) -- (0,0) -- (8.93,4.29) -- cycle    ;
		\draw    (97,250) .. controls (8.44,328.61) and (198.09,324.05) .. (108.37,251.1) ;
		\draw [shift={(107,250)}, rotate = 38.51] [fill={rgb, 255:red, 0; green, 0; blue, 0 }  ][line width=0.08]  [draw opacity=0] (8.93,-4.29) -- (0,0) -- (8.93,4.29) -- cycle    ;
		
		\draw (514.87,257.41) node [anchor=north west][inner sep=0.75pt]    {$e_{01}$};
		\draw (511.87,86.41) node [anchor=north west][inner sep=0.75pt]    {$e_{01}$};
		\draw (716.87,150.41) node [anchor=north west][inner sep=0.75pt]    {$e_{0}$};
		\draw (764.87,209.41) node [anchor=north west][inner sep=0.75pt]    {$e_{0}$};
		\draw (653.87,108.41) node [anchor=north west][inner sep=0.75pt]    {$e_{0}$};
		\draw (352.87,112.41) node [anchor=north west][inner sep=0.75pt]    {$e_{1}$};
		\draw (290.87,180.41) node [anchor=north west][inner sep=0.75pt]    {$e_{1}$};
		\draw (506.87,178.41) node [anchor=north west][inner sep=0.75pt]    {$\mathbf{n}_{3,2,\beta}$};
		\draw (70,232.4) node [anchor=north west][inner sep=0.75pt]    {$v_{0}$};
		\draw (70,118.4) node [anchor=north west][inner sep=0.75pt]    {$v_{1}$};
		\draw (102.87,172.41) node [anchor=north west][inner sep=0.75pt]    {$e_{01}$};
		\draw (89.87,41.41) node [anchor=north west][inner sep=0.75pt]    {$e_{1}$};
		\draw (92.87,312.41) node [anchor=north west][inner sep=0.75pt]    {$e_{0}$};

	\end{tikzpicture}
	\caption{Left: The underlying directed graph of $\mathcal A_{\infty, \text{2-mod}}^{\mathbb S}$. \quad Right: The 2-cell corresponding to the generator $\mathbf n_{n_0,n_1,\beta }$.}
	\label{fig:A_inf_bimodule}
\end{figure}

Denote by $\theta_e\in\mathbb S(e; e)$ the identity 2-cell at $e\in E$.
Abusing the notations, the composition of 2-cells in $\mathbb S$ will be often denoted by $\beta+\beta' :=\beta\circ_i\beta'$.
The symbols $\mathbf m_\beta$ around \eqref{eq:Ainf-category_d} can be more explicitly described as follows:
The $2$-cells are freely generated by the non-identity $2$-cells of $\mathbb S$ in the sense that we take $\mathcal M$ to be freely generated by the following symbols:
	\vspace{3pt}
	\begin{itemize}
		\itemsep 2pt
		\item $\mathbf m_{n,\beta}^{(0)}$ in $\mathcal M((e_0)^{n}; e_0)$ for every $\beta \in \mathbb S( (e_0)^n; e_0)$ with $\beta\neq \theta_{e_0}$;
		\item  $\mathbf m_{n,\beta}^{(1)}$ in $\mathcal M((e_1)^{n}; e_1)$ for every $\beta \in \mathbb S( (e_1)^n; e_1)$ with $\beta\neq \theta_{e_1}$;
		\item  $\mathbf n_{n_0,n_1,\beta}$ in $\mathcal M((e_0)^{n_0}, e_{01}, (e_1)^{n_1}; e_{01})$ for every $\beta\in\mathbb S((e_0)^{n_0}, e_{01}, (e_1)^{n_1}; e_{01})$ with $\beta \neq \theta_{e_{01}}$.
	\end{itemize}

	\vspace{5pt}
Here the numbers $n, n_0,n_1$ are redundant since $\beta$ determines them, and the conditions may be more precisely written as $(n,\beta) \neq (1,\theta_{e_0})$, $(n,\beta)\neq (1,\theta_{e_1})$, $(n_0,n_1,\beta)\neq (0,0,\theta_{e_{01}})$.
	We also require that their degrees are all equal to one: $|\mathbf m_{n,\beta}^{(i)}|=|\mathbf n_{n_0,n_1,\beta}|=1$. Introduce the differentials $\delta=\{\delta_{\vec e; e'}\}$ on the components of $\mathcal M$ defined first on the above generators by 
	\[
	\delta (\mathbf m^{(i)}_{n,\beta})\;=\;-\!\!\sum_{\substack{r+s+t=n ; \ r, s, t \ge 0 \\ \beta'+\beta''=\beta}}
	\, \mathbf m^{(i)}_{r+1+t,\beta'}\circ_{r+1} \mathbf m^{(i)}_{s,\beta''},
	\]
	\begin{align*}
		\delta(\mathbf n_{n_0,n_1,\beta})
		= \
		&-\sum_{\substack{r_0+s_0+t_0=n_0 \\ \beta'+\beta''=\beta}} \  \mathbf n_{r_0+1+t_0, n_1,\beta'} \circ_{r_0+1} \mathbf m^{(0)}_{s_0,\beta''} \\
		&- \sum_{\substack{r_1+s_1+t_1=n_1 \\ \beta'+\beta''=\beta}} \ \mathbf n_{n_0, r_1+1+t_1,\beta'} \circ_{n_0+r_1+2} \mathbf m^{(1)}_{s_1,\beta''} \\
		&- \sum_{\substack{n_0'+n_0''=n_0 \\ n_1'+n_1''=n_1 \\ \beta'+\beta''=\beta}} \mathbf n_{n_0',n_1',\beta'} \circ_{n_0'+1} \mathbf n_{n_0'',n_1'',\beta''}
	\end{align*}
	and then extended by the Leibniz-type rule $\delta (\mathbf u \circ_i \mathbf u')= (\delta\mathbf u)\circ_i \mathbf u' \ +\ (-1)^{|\mathbf u|}\,\mathbf u\circ_i (\delta\mathbf u')$.
	This completes the construction of (\ref{A_inf_2_bimodule_eq}).

	Let $(X,\pmb\alpha)$ be an algebra over the dg $\mathbf{fc}$-multicategory $\mathcal A_{\infty, \text{2-mod}}^{\mathbb S}$ in the sense of Definition \ref{C_alg_defn_End_dg}, so we have a map of dg $\mathbf{fc}$-multicategories:
\[ \pmb\alpha : \ \mathcal A_{\infty, \text{2-mod}}^{\mathbb S} \to \widehat{\mathbf{End}}(X)
\]
	Let's unpack it as follows.
	Note first that $X$ consists of three cochain complexes $(X_i,\dd_i ) :=(X(e_i), \dd_{e_i})$ for $i=0,1$ and $(X_{01}, \dd_{01}) := (X(e_{01}), \dd_{e_{01}})$.
	Define the operators
	\begin{align*}
		&\m^{(i)}_{n,\beta} := \alpha (\mathbf m^{(i)}_{n, \beta}) : X_i^{\otimes  n} \to X_i   \\
		&\n_{n_0,n_1,\beta} := \alpha (\mathbf n_{n_0,n_1,\beta}) : X_0^{\otimes n_0} \otimes X_{01} \otimes X_1^{\otimes  n_1} \to X_{01}  
	\end{align*}
	where $(n,\beta)\neq (0,\theta_{e_i}), (1,\theta_{e_i})$ and $(n_0,n_1,\beta)\neq (0,0,\theta_{e_{01}})$ by construction.
	We also define
	\begin{equation}
		\label{n_00_eq}
		\m_{1,\theta_{e_i}}^{(i)} = \dd_i  \ , \qquad \n_{0,0,\theta_{e_{01}}}= \dd_{01}
	\end{equation}
	Note that $|\m^{(i)}|=|\n|=1$.
	By \eqref{End_X_hat_composition}, \eqref{alpha_u_composition}, and \eqref{alpha_u_delta_eq}, we can first show that the operators $\m_{n,\beta}^{(i)}$ satisfy the $A_\infty$ relations in the same way as (\ref{A_inf_alg_relation}).
	Besides, for $x_1,\dots, x_{n_0}\in X_0$, $z_1,\dots, z_{n_1} \in X_1$, and $y\in X_{01}$, we have
	\begin{align*}
		\alpha (\delta \mathbf n_{n_0,n_1,\beta}) (x_1,\dots, x_{n_0} , y, z_1,\dots, z_{n_1}) \\
		= \dd_{01} \circ \n_{n_0,n_1,\beta} (x_1,\dots, z_{n_1}) & + \sum (-1)^{\sum_{s=1}^{i-1}|x_s|} \ \n_{n_0,n_1,\beta} (x_1,\dots, \mathsf d_0 x_i,\dots, x_{n_0}, y,\dots, z_{n_1}) \\
		& + \sum (-1)^{\sum_{s=1}^{n_0}|x_s|}  \  \n_{n_0,n_1,\beta} (x_1,\dots, x_{n_0}, \dd_{01} y , z_1,\dots, z_{n_1}) \\
		& + \sum (-1)^{\sum_{s=1}^{n_0}|x_s|+\sum_{t=1}^{j-1}|z_t|} \  \n_{n_0,n_1,\beta}(x_1,\dots, x_{n_0}, y, z_1,\dots, \mathsf d_1 z_j, \dots, z_{n_1})
	\end{align*}
	and
	\begin{align*}
		& -  \alpha (\delta \mathbf n_{n_0,n_1,\beta}) (x_1,\dots, x_{n_0} , y, z_1,\dots, z_{n_1})  \\
		= \
		& \sum_{\substack{r_0+s_0+t_0=n_0 \\ \beta'+\beta''=\beta \\ (s_0,\beta'')\neq (1,\theta_{e_0})}} \ (-1)^{\sum_{s=1}^{r_0}|x_s|} \ \n_{r_0+1+t_0, n_1,\beta'} 
		(x_1,\dots, x_{r_0} ,  \m^{(0)}_{s_0,\beta''} (\dots , x_{r_0+s_0}), \dots, x_{n_0}, y, z_1,\dots, z_{n_1})
		\\
		+ \ & \sum_{\substack{r_1+s_1+t_1=n_1 \\ \beta'+\beta''=\beta \\ (s_1,\beta'')\neq (1,\theta_{e_1})}} \ (-1)^{\sum_{s=1}^{n_0}|x_s|+|y|+\sum_{t=1}^{r_1}|z_t|} \  \n_{n_0, r_1+1+t_1,\beta'} (x_1,\dots, x_{n_0}, y, z_1,\dots, z_{r_1} ,  \m^{(1)}_{s_1,\beta''} (\dots , z_{r_1+s_1}) ,\dots, z_{n_1})
		\\
		+ \ & \sum_{\substack{n_0'+n_0''=n_0 \\ n_1'+n_1''=n_1 \\ \beta'+\beta''=\beta \\ (n_0',n_1',\beta')\neq (0,0,\theta_{e_{01}}) \\ (n_0'',n_1'',\theta_{e_{01}}) \neq (0,0,\theta_{e_{01}})}} 
		\ (-1)^{\sum_{s=1}^{n_0'}|x_s|} \  \n_{n_0',n_1',\beta'} (x_1,\dots,   \n_{n_0'',n_1'',\beta''} (x_{n_0'+1} , \dots, x_{n_0}, y, z_1, \dots, z_{n_1''}) ,\dots, z_{n_1} )
	\end{align*}
	where the signs are derived from \eqref{End_X_hat_composition} and \eqref{alpha_u_composition}.
	The above two computations together with (\ref{n_00_eq}) imply the $A_\infty$ bimodule equation:
	\begin{align*}
		& \sum_{\substack{r_0+s_0+t_0=n_0 \\ \beta'+\beta''=\beta}} \ (-1)^{\sum_{s=1}^{r_0}|x_s|} \ \n_{r_0+1+t_0, n_1,\beta'} 
		(x_1,\dots, x_{r_0} ,  \m^{(0)}_{s_0,\beta''} (\dots , x_{r_0+s_0}), \dots, x_{n_0}, y, z_1,\dots, z_{n_1})
		\\
		& + \  \sum_{\substack{r_1+s_1+t_1=n_1 \\ \beta'+\beta''=\beta }} \ (-1)^{\sum_{s=1}^{n_0}|x_s|+|y|+\sum_{t=1}^{r_1}|z_t|} \  \n_{n_0, r_1+1+t_1,\beta'} (x_1,\dots, x_{n_0}, y, z_1,\dots, z_{r_1} ,  \m^{(1)}_{s_1,\beta''} (\dots , z_{r_1+s_1}) ,\dots, z_{n_1})
		\\
		& + \  \sum_{\substack{n_0'+n_0''=n_0 \\ n_1'+n_1''=n_1 \\ \beta'+\beta''=\beta }} 
		\ (-1)^{\sum_{s=1}^{n_0'}|x_s|} \  \n_{n_0',n_1',\beta'} (x_1,\dots,   \n_{n_0'',n_1'',\beta''} (x_{n_0'+1} , \dots, x_{n_0}, y, z_1, \dots, z_{n_1''}) ,\dots, z_{n_1} )  \ \  = \ \  0
	\end{align*}

In particular, let \(\mathcal A_{\infty,\mathrm{2\mbox{-}mod}}\) denote the dg $\mathbf{fc}$-multicategory in \eqref{A_inf_2_bimodule_eq} in the special case where $\mathbb S=\pmb 0_E$ is trivial as in Example~\ref{ex_trivial_labeling} and $E$ is the above graph with three edges. Then the operations are \(\m^{(i)}_n\) for $i=0,1$ and \(\n_{n_0,n_1}\) with no additional $\beta$-labels. The discussion above has produced the following result:
	
\begin{thm}
\label{standard_A_inf_bim_thm}
The notion of an $A_\infty$ bimodule (see e.g. \cite{keller2005infinity}) is equivalent to an algebra over the dg $\mathbf{fc}$-multicategory $\mathcal A_{\infty, \text{2-mod}}$.
\end{thm}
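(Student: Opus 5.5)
We establish the two directions of the equivalence separately, reusing the unpacking carried out just before the statement, now with $\mathbb S=\pmb 0_E$. In this case the labels are trivial: each nonempty $\mathbb S(\vec e;e')$ over a profile-loop is a singleton. By the classification of profile-loops in the three-edge graph $\{e_0,e_1,e_{01}\}$, the generators of $\mathcal A_{\infty,\text{2-mod}}$ are exactly $\mathbf m^{(0)}_n$, $\mathbf m^{(1)}_n$ and $\mathbf n_{n_0,n_1}$, indexed by arities only. We exclude the unit cases, and, since $\pmb 0_E$ rather than $\pmb 0_E^{red}$ is used, we also keep the curved generators with $n=0$. We first observe the following. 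Because $\mathcal M$ is \emph{freely} generated by these symbols (as an $\mathbf{fc}$-multicategory with the free composition and the identity 2-cells $\iota(e)$), a map of $\mathbf{fc}$-multicategories $\pmb\alpha$ that is the identity on $(V,E)$ is uniquely determined by the images of the generators. Moreover, any choice of degree-one maps $\m^{(0)}_n,\m^{(1)}_n,\n_{n_0,n_1}$ between the appropriate tensor products of $X_0,X_1,X_{01}$ extends uniquely to such a map compatible with $\circ_i$ and the units.

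\emph{From algebras to bimodules.} Given $(X,\pmb\alpha)$, set $\m^{(i)}_n=\alpha(\mathbf m^{(i)}_n)$ and $\n_{n_0,n_1}=\alpha(\mathbf n_{n_0,n_1})$, and complete these with $\m^{(i)}_1=\dd_i$ and $\n_{0,0}=\dd_{01}$ as in \eqref{n_00_eq}. The cochain-map condition \eqref{alpha_u_delta_eq}, evaluated on the generators and combined with \eqref{alpha_u_composition} and \eqref{End_X_hat_composition}, yields exactly the two displayed identities preceding the statement. Adding them gives the $A_\infty$ relations for $(X_0,\m^{(0)})$ and $(X_1,\m^{(1)})$ and the displayed bimodule equation for $(X_{01},\n)$. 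In the convention of shifted degrees, this is precisely the definition of an $A_\infty$ bimodule over the pair $(X_0,X_1)$ in \cite{keller2005infinity}, possibly curved if the $n=0$ operations are nonzero. If one wants strictly uncurved bimodules, the same argument applies verbatim with $\pmb 0_E^{red}$.

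\emph{From bimodules to algebras.} Conversely, starting from bimodule data we define $\alpha$ on generators by the formulas above and extend by freeness. It remains to check that $\alpha$ is a cochain map, i.e.\ $\widehat{\mathsf d}\alpha(\mathbf u)=\alpha(\delta\mathbf u)$ for all 2-cells $\mathbf u$. Both sides are derivations with respect to $\circ_i$: the left side by Definition \ref{dg_fc_defn}(4) for $\widehat{\mathbf{End}}(X)$, established in Proposition \ref{End_E_X_widehat_prop}, and the right side by the Leibniz rule on $\mathcal M$ together with \eqref{alpha_u_composition}. Both also vanish on the identities. Hence it suffices to verify the equation on generators, where it is exactly the $A_\infty$ and bimodule relations, read backwards through the same computation. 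The two constructions are evidently mutually inverse, which gives the equivalence.

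The main obstacle is the sign bookkeeping in the middle step: one must confirm that the Koszul signs from \eqref{End_X_hat_composition}, with $|\m|=|\n|=1$ in shifted degrees, match those of \cite{keller2005infinity} after the standard degree-shift translation. In particular, the sign $(-1)^{\sum|x_s|+|y|+\cdots}$ in the right-module term needs checking. I would handle this by comparing with the one-object case already treated in \eqref{A_inf_alg_relation}, and then checking the three types of terms separately.
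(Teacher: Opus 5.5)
Your overall route is the paper's. The paper only carries out the algebra-to-bimodule unpacking, computing $\alpha(\delta\mathbf n_{n_0,n_1,\beta})$ once via \eqref{widehat_d_eq} and once via \eqref{alpha_u_composition} and \eqref{End_X_hat_composition}, and leaves the converse implicit. Your converse is a correct and useful completion: you extend from generators by freeness, then note that $\widehat{\mathsf d}\circ\alpha$ and $\alpha\circ\delta$ are both degree-one derivations along $\alpha$ that vanish on identities, so the cochain-map condition need only be checked on generators.

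The genuine problem is your treatment of the empty-input generators. You keep $\mathbf m^{(i)}_0$ by reading $\pmb 0_E$ literally. But $\mathbf m^{(i)}_1$ is excluded as the unit case, and $\m^{(i)}_1=\dd_i$ is the internal differential of $X_i$. You assert this yields ``possibly curved'' bimodules, and that is false, for two reasons.

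First, the resulting $\delta$ does not square to zero. We have $\delta\mathbf m^{(0)}_0=0$, since its only candidate term would involve $\mathbf m^{(0)}_1$. Hence $\delta^2\mathbf m^{(0)}_2=-\sum_{j=1}^{3}(\delta\mathbf m^{(0)}_3)\circ_j\mathbf m^{(0)}_0$. This contains the six distinct trees $(\mathbf m^{(0)}_2\circ_k\mathbf m^{(0)}_2)\circ_j\mathbf m^{(0)}_0$ ($k=1,2$; $j=1,2,3$), each with coefficient $\pm1$. In a genuine curved theory these are cancelled by terms coming from a generator $\mathbf m_1$, which is absent here. So this object is not a dg $\mathbf{fc}$-multicategory at all.

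Second, on the algebra side no generator produces the arity-one relation $\m_1\m_1+\m_2(\m_0,\cdot)\pm\m_2(\cdot,\m_0)=0$. Nor does any generator produce the arity-$(0,0)$ bimodule relation $\n_{0,0}\n_{0,0}+\n_{1,0}(\m^{(0)}_0,\cdot)\pm\n_{0,1}(\cdot,\m^{(1)}_0)=0$. Instead, $\dd_i^2=0$ and $\dd_{01}^2=0$ are forced, because $X$ consists of cochain complexes. So you would get operations satisfying the curved relations in the other arities, with $\m_1^2=0$ in place of the arity-one relation. That is neither a curved $A_\infty$ algebra nor a curved bimodule. Moreover, Keller's bimodules carry no $\m_0$, so your ``evidently mutually inverse'' step fails under this reading.

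The equivalence holds only after deleting the empty-input $2$-cells, i.e.\ with $\pmb 0_E^{red}$ (Example \ref{ex_S_E_labeling_reduced}). Here this removes only $(\varnothing;e_0)$ and $(\varnothing;e_1)$, since $(\varnothing;e_{01})$ is not a profile-loop. This is exactly what the paper's computation uses when it restricts to $(n,\beta)\neq(0,\theta_{e_i}),(1,\theta_{e_i})$. It is also why $\mathcal A^S_\infty$ excludes $(0,\theta)$.

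You should make this the main case rather than a closing remark, and drop the curved claim. With that change your argument goes through as written. The remaining sign comparison with Keller is the same shifted-degree bookkeeping that the paper also leaves implicit.
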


\begin{rmk}
	To get some intuition for the extra labeling from $\mathbb S$, one can look at Example \ref{ex_S_E_labeling}. We also give an extra practical case below. Let $L_0,L_1$ be connected, embedded, compact Lagrangian submanifolds intersecting transversely inside a symplectic manifold $(X,\omega)$.
	Assume that $[\omega]\in H^2(X;\mathbb Z)$. For $i=0,1$,
	\[
	S_i \;:=\; \bigl\{\, \omega(\beta)\ \big|\ \beta\in \pi_2(X,L_i)\,\bigr\}
	\;\subset\; \mathbb R 
	\]
	is a discrete subgroup of $\mathbb R$.
	Let $\overline{\mathbb R}=\mathbb R\cup\{-\infty,+\infty\}$, and consider continuous maps
	$v:[0,1]\times \overline{\mathbb R}\to X$ 
	such that
	$v(\{i\}\times \overline{\mathbb R})\subset L_i$ and $
	v\bigl([0,1]\times\{\pm\infty\}\bigr)\in L_0\cap L_1$.
	Let $\pi_2(L_0,L_1)$ denote the set of homotopy classes of such maps relative to these boundary
	and endpoint conditions. For $\alpha\in \pi_2(L_0,L_1)$ represented by $v$, we set
	$
	\omega(\alpha) = \int_{[0,1]\times \mathbb R} v^*\omega,
	$
	which is well-defined since $\omega|_{L_0}=\omega|_{L_1}=0$.
	Then,
	\[
	S_{01}\;:=\;\bigl\{\,\omega(\alpha)\ \big|\ \alpha\in \pi_2(L_0,L_1)\,\bigr\}\subset \mathbb R.
	\]
	is also a discrete subset of $\mathbb R$. Now we define a vertically discrete $\mathbf{fc}$-multicategory $\mathbb S=(V,E,\mathbb S)$ as follows. The underlying directed graph $(V,E)$ is as above, and the only nontrivial collections of 2-cells are
	$\mathbb S\bigl((e_0)^n; e_0\bigr)=S_0$, $
	\mathbb S\bigl((e_1)^n; e_1\bigr)=S_1$, and $\mathbb S\bigl((e_0)^{n_0},\, e_{01},\, (e_1)^{n_1};\, e_{01}\bigr)=S_{01}$.
	This choice of labeling $\mathbb S$ is basically the one used for the notion of a \textit{filtered $A_\infty$ bimodule} in the symplectic literature; see \cite[Definition 3.7.5]{FOOOBookOne}. However, in principle one can make other choices of $\mathbb S$, such as in \eqref{S_iota_eq}, in order to keep track of additional information in practice. For instance, we can replace the above $S_0,S_1,S_{01}$ by $\pi_2(X,L_0)$, $\pi_2(X,L_1)$ and $\pi_2(L_0,L_1)$ respectively so that we get another example of $\mathbb S$.
\end{rmk}

\subsection{$A_\infty$ bimodules over a pair of $A_\infty$ categories}
\label{s_Ainf_bimodule_cat}

One may find a natural extension of Section \ref{s_Ainf_bimodule_alg}.
Let $V$ be a set. Consider a partition $V=V'\sqcup V''$, and define 
\[
E=(V'\times V') \sqcup (V'\times V'') \sqcup (V''\times V'')
\]
Then, $E=(V,E)$ is a directed graph as illustrated below.

\begin{equation*}
\begin{tikzpicture}[x=0.75pt,y=0.75pt,yscale=-0.5,xscale=0.5]
	\draw  [fill={rgb, 255:red, 74; green, 144; blue, 226 }  ,fill opacity=1 ] (112,121) .. controls (112,118.24) and (114.24,116) .. (117,116) .. controls (119.76,116) and (122,118.24) .. (122,121) .. controls (122,123.76) and (119.76,126) .. (117,126) .. controls (114.24,126) and (112,123.76) .. (112,121) -- cycle ;
	\draw  [fill={rgb, 255:red, 74; green, 144; blue, 226 }  ,fill opacity=1 ] (169,270) .. controls (169,267.24) and (171.24,265) .. (174,265) .. controls (176.76,265) and (179,267.24) .. (179,270) .. controls (179,272.76) and (176.76,275) .. (174,275) .. controls (171.24,275) and (169,272.76) .. (169,270) -- cycle ;
	\draw    (112,121) .. controls (22,42) and (215,45) .. (122,121) ;
	\draw [shift={(124.17,63.34)}, rotate = 184.36] [color={rgb, 255:red, 0; green, 0; blue, 0 }  ][line width=0.75]    (10.93,-3.29) .. controls (6.95,-1.4) and (3.31,-0.3) .. (0,0) .. controls (3.31,0.3) and (6.95,1.4) .. (10.93,3.29)   ;
	\draw    (169,270) .. controls (80,349) and (272,344) .. (179,270) ;
	\draw [shift={(181.51,326.86)}, rotate = 175.09] [color={rgb, 255:red, 0; green, 0; blue, 0 }  ][line width=0.75]    (10.93,-3.29) .. controls (6.95,-1.4) and (3.31,-0.3) .. (0,0) .. controls (3.31,0.3) and (6.95,1.4) .. (10.93,3.29)   ;
	\draw  [fill={rgb, 255:red, 74; green, 144; blue, 226 }  ,fill opacity=1 ] (250,268) .. controls (250,265.24) and (252.24,263) .. (255,263) .. controls (257.76,263) and (260,265.24) .. (260,268) .. controls (260,270.76) and (257.76,273) .. (255,273) .. controls (252.24,273) and (250,270.76) .. (250,268) -- cycle ;
	\draw    (250,268) .. controls (161,347) and (353,342) .. (260,268) ;
	\draw [shift={(262.51,324.86)}, rotate = 175.09] [color={rgb, 255:red, 0; green, 0; blue, 0 }  ][line width=0.75]    (10.93,-3.29) .. controls (6.95,-1.4) and (3.31,-0.3) .. (0,0) .. controls (3.31,0.3) and (6.95,1.4) .. (10.93,3.29)   ;
	\draw  [fill={rgb, 255:red, 74; green, 144; blue, 226 }  ,fill opacity=1 ] (172,90) .. controls (172,87.24) and (174.24,85) .. (177,85) .. controls (179.76,85) and (182,87.24) .. (182,90) .. controls (182,92.76) and (179.76,95) .. (177,95) .. controls (174.24,95) and (172,92.76) .. (172,90) -- cycle ;
	\draw    (172,90) .. controls (82,11) and (275,14) .. (182,90) ;
	\draw [shift={(184.17,32.34)}, rotate = 184.36] [color={rgb, 255:red, 0; green, 0; blue, 0 }  ][line width=0.75]    (10.93,-3.29) .. controls (6.95,-1.4) and (3.31,-0.3) .. (0,0) .. controls (3.31,0.3) and (6.95,1.4) .. (10.93,3.29)   ;
	\draw  [fill={rgb, 255:red, 74; green, 144; blue, 226 }  ,fill opacity=1 ] (243,89) .. controls (243,86.24) and (245.24,84) .. (248,84) .. controls (250.76,84) and (253,86.24) .. (253,89) .. controls (253,91.76) and (250.76,94) .. (248,94) .. controls (245.24,94) and (243,91.76) .. (243,89) -- cycle ;
	\draw    (243,89) .. controls (153,10) and (346,13) .. (253,89) ;
	\draw [shift={(255.17,31.34)}, rotate = 184.36] [color={rgb, 255:red, 0; green, 0; blue, 0 }  ][line width=0.75]    (10.93,-3.29) .. controls (6.95,-1.4) and (3.31,-0.3) .. (0,0) .. controls (3.31,0.3) and (6.95,1.4) .. (10.93,3.29)   ;
	\draw  [fill={rgb, 255:red, 74; green, 144; blue, 226 }  ,fill opacity=1 ] (305,122) .. controls (305,119.24) and (307.24,117) .. (310,117) .. controls (312.76,117) and (315,119.24) .. (315,122) .. controls (315,124.76) and (312.76,127) .. (310,127) .. controls (307.24,127) and (305,124.76) .. (305,122) -- cycle ;
	\draw    (305,122) .. controls (215,43) and (408,46) .. (315,122) ;
	\draw [shift={(317.17,64.34)}, rotate = 184.36] [color={rgb, 255:red, 0; green, 0; blue, 0 }  ][line width=0.75]    (10.93,-3.29) .. controls (6.95,-1.4) and (3.31,-0.3) .. (0,0) .. controls (3.31,0.3) and (6.95,1.4) .. (10.93,3.29)   ;
	\draw  [dash pattern={on 0.84pt off 2.51pt}] (73,38.2) .. controls (73,20.42) and (87.42,6) .. (105.2,6) -- (325.8,6) .. controls (343.58,6) and (358,20.42) .. (358,38.2) -- (358,134.8) .. controls (358,152.58) and (343.58,167) .. (325.8,167) -- (105.2,167) .. controls (87.42,167) and (73,152.58) .. (73,134.8) -- cycle ;
	\draw  [dash pattern={on 0.84pt off 2.51pt}] (98,262.8) .. controls (98,250.21) and (108.21,240) .. (120.8,240) -- (315.2,240) .. controls (327.79,240) and (338,250.21) .. (338,262.8) -- (338,331.2) .. controls (338,343.79) and (327.79,354) .. (315.2,354) -- (120.8,354) .. controls (108.21,354) and (98,343.79) .. (98,331.2) -- cycle ;
	\draw [color={rgb, 255:red, 126; green, 211; blue, 33 }  ,draw opacity=1 ]   (308.88,129.78) -- (255,263) ;
	\draw [shift={(310,127)}, rotate = 112.02] [fill={rgb, 255:red, 126; green, 211; blue, 33 }  ,fill opacity=1 ][line width=0.08]  [draw opacity=0] (8.93,-4.29) -- (0,0) -- (8.93,4.29) -- cycle    ;
	\draw    (117,121) -- (310,122) ;
	\draw    (117,121) -- (177,90) ;
	\draw    (117,121) -- (248,89) ;
	\draw    (177,90) -- (248,89) ;
	\draw    (248,89) -- (310,122) ;
	\draw    (177,90) -- (310,122) ;
	\draw [color={rgb, 255:red, 126; green, 211; blue, 33 }  ,draw opacity=1 ]   (248.12,97) -- (255,263) ;
	\draw [shift={(248,94)}, rotate = 87.63] [fill={rgb, 255:red, 126; green, 211; blue, 33 }  ,fill opacity=1 ][line width=0.08]  [draw opacity=0] (8.93,-4.29) -- (0,0) -- (8.93,4.29) -- cycle    ;
	\draw [color={rgb, 255:red, 126; green, 211; blue, 33 }  ,draw opacity=1 ]   (178.26,97.72) -- (255,263) ;
	\draw [shift={(177,95)}, rotate = 65.1] [fill={rgb, 255:red, 126; green, 211; blue, 33 }  ,fill opacity=1 ][line width=0.08]  [draw opacity=0] (8.93,-4.29) -- (0,0) -- (8.93,4.29) -- cycle    ;
	\draw [color={rgb, 255:red, 126; green, 211; blue, 33 }  ,draw opacity=1 ]   (119.13,128.11) -- (255,263) ;
	\draw [shift={(117,126)}, rotate = 44.79] [fill={rgb, 255:red, 126; green, 211; blue, 33 }  ,fill opacity=1 ][line width=0.08]  [draw opacity=0] (8.93,-4.29) -- (0,0) -- (8.93,4.29) -- cycle    ;
	\draw [color={rgb, 255:red, 126; green, 211; blue, 33 }  ,draw opacity=1 ]   (113.23,129.74) -- (174,265) ;
	\draw [shift={(112,127)}, rotate = 65.81] [fill={rgb, 255:red, 126; green, 211; blue, 33 }  ,fill opacity=1 ][line width=0.08]  [draw opacity=0] (8.93,-4.29) -- (0,0) -- (8.93,4.29) -- cycle    ;
	\draw [color={rgb, 255:red, 126; green, 211; blue, 33 }  ,draw opacity=1 ]   (172.03,93) -- (174,265) ;
	\draw [shift={(172,90)}, rotate = 89.35] [fill={rgb, 255:red, 126; green, 211; blue, 33 }  ,fill opacity=1 ][line width=0.08]  [draw opacity=0] (8.93,-4.29) -- (0,0) -- (8.93,4.29) -- cycle    ;
	\draw [color={rgb, 255:red, 126; green, 211; blue, 33 }  ,draw opacity=1 ]   (241.91,91.79) -- (174,265) ;
	\draw [shift={(243,89)}, rotate = 111.41] [fill={rgb, 255:red, 126; green, 211; blue, 33 }  ,fill opacity=1 ][line width=0.08]  [draw opacity=0] (8.93,-4.29) -- (0,0) -- (8.93,4.29) -- cycle    ;
	\draw [color={rgb, 255:red, 126; green, 211; blue, 33 }  ,draw opacity=1 ]   (302.97,124.21) -- (174,265) ;
	\draw [shift={(305,122)}, rotate = 132.49] [fill={rgb, 255:red, 126; green, 211; blue, 33 }  ,fill opacity=1 ][line width=0.08]  [draw opacity=0] (8.93,-4.29) -- (0,0) -- (8.93,4.29) -- cycle    ;
	\draw    (174,270) -- (255,268) ;
	
	\draw (86,19.4) node [anchor=north west][inner sep=0.75pt]    {$V''$};
	\draw (105,259.4) node [anchor=north west][inner sep=0.75pt]    {$V'$};
\end{tikzpicture}
\end{equation*}

In the special case $V=\{v_0,v_1\}$ is a two-element set, this recovers the directed graph $\{e_0,e_1,e_{01}\}$ in Section \ref{s_Ainf_bimodule_alg} (cf. Figure \ref{fig:A_inf_bimodule}).
Let's take $\mathbb S=\pmb 0_E$ to be the trivial one for simplicity.
Due to \eqref{A_inf_V_cat_eq}, we can form dg $\mathbf{fc}$-multicategories $\mathcal A_{\infty, V'}$ and $\mathcal A_{\infty, V''}$. By Definition \ref{generalized_Ainf_defn}, we can form a dg $\mathbf{fc}$-multicategory $\mathcal A_{\infty, E}$.
Then, by an argument nearly identical to that of Section~\ref{s_Ainf_bimodule_alg}, giving an $\mathcal A_{\infty, E}$-algebra is equivalent to specifying an $A_\infty$ category with object set $V'$, an $A_\infty$ category with object set $V''$, and an $A_\infty$ bimodule over these two $A_\infty$ categories in the usual sense (see, for instance, \cite[Definition~2.12]{Ganatra_thesis}).

\subsection{$A_\infty$ multi-modules}
In view of the discussion around \eqref{partition_V_to_r_sets}, it is natural to introduce the following construction. Let V be a set and let $r\ge 1$ be an integer. Fix a partition $V=\bigsqcup_{i=1}^r V^{(i)}$ and define
\[E  := \bigcup_{j\le k} \, V^{(j)}\times V^{(k)}  
\]
Let $\mathcal A_{\infty,E}$ be the dg $\mathbf{fc}$-multicategory associated to E as in Definition~\ref{generalized_Ainf_defn}. We then propose to define an $A_\infty$ $r$-module (relative to the above partition) to be an algebra over $\mathcal A_{\infty,E}$. When $r=2$, this exactly recovers the above notion of $A_\infty$ bimodules.

	\bibliographystyle{abbrv}
	
	\bibliography{mybib_cohomology}	
	
\end{document}